\title[Neural network approximations of parametric hyperbolic conservation laws]{Error analysis for deep neural network approximations of parametric hyperbolic conservation laws}
\author{T.~De Ryck}
\email{tim.deryck@sam.math.ethz.ch}
\author{S.~Mishra}
\address[T. De Ryck and S. Mishra]{Seminar for Applied Mathematics, ETH Z\"urich, R\"amistrasse 101, 8092 Z\"urich, Switzerland}
\email{siddhartha.mishra@sam.math.ethz.ch}
\newcommand{\s}{s} %dimension of flux
\renewcommand{\S}{\mathcal{S}} %training set
\newcommand{\Et}{\mathcal{E}_T} %training error
\newcommand{\Eg}{\mathcal{E}_G} %generalization error
\newcommand{\Aet}{\overline{\mathcal{E}}_T} %acc training error
\newcommand{\Aeg}{\overline{\mathcal{E}}_G} %acc gen error
\newcommand{\ctv}{C_{TV}}
\newtheorem{theorem}{Theorem}[section]
\newtheorem{example}[theorem]{Example}
\newtheorem{remark}[theorem]{Remark}
\newtheorem{definition}[theorem]{Definition}
\newtheorem{lemma}[theorem]{Lemma}
\newtheorem{corollary}[theorem]{Corollary}
\begin{document}

\begin{abstract}
We derive rigorous bounds on the error resulting from the approximation of the solution of parametric hyperbolic scalar conservation laws with ReLU neural networks. We show that the approximation error can be made as small as desired with ReLU neural networks that overcome the curse of dimensionality. In addition, we provide an explicit upper bound on the generalization error in terms of the training error, number of training samples and the neural network size. The theoretical results are illustrated by numerical experiments. 
\end{abstract}

\maketitle

\section{Introduction}
A large number of interesting phenomena in science and engineering are modelled by a class of nonlinear PDEs called \emph{hyperbolic systems of conservation laws} \cite{holden2015front}. Examples include the compressible Euler equations of aerodynamics, the shallow-water equations of oceanography, the magnetohydrodynamics (MHD) equations of plasma physics and the equations of nonlinear elasticity. It is well-known that solutions of hyperbolic conservation laws develop discontinuities such as shock waves and contact discontinuities in finite time, even when the initial data are smooth. Thus, solutions are sought in the weak (distributional) sense. However, these weak solutions are not necessarily unique and additional admissible criteria or \emph{entropy conditions} are imposed in order to recover uniqueness. 

A variety of numerical methods have been proposed to approximate the entropy solutions of conservation laws efficiently. These include high-resolution finite volume methods, based on TVD, ENO and WENO reconstructions, discontinuous Galerkin and spectral viscosity methods \cite{HEST1}. Despite their well-documented successes, these standard numerical methods are characterized by their high computational cost, particularly in two and three space dimensions \cite{LMR1} and references therein. This cost is particularly evident when the problem at hand requires \emph{multiple} calls to the PDE solver. Such \emph{many-query} problems can appear in a variety of contexts such as Uncertainty quantification (UQ) \cite{UQbook}, PDE constrained optimization \cite{Popt}, deterministic and Bayesian Inverse problems \cite{BINV}. Using conventional numerical methods for these problems is unfeasible and one needs to develop \emph{fast surrogates} which can accurately approximate the PDE solution at a very small fraction of the computational cost of a conventional numerical method for hyperbolic PDEs, particularly in several space dimensions. 

Many different approaches have been tried in the context of designing fast and efficient surrogates for conservation laws. One class of methods can be classified as \emph{reduced order models} or \emph{reduced basis methods}, see \cite{ROMbook} and references therein for further details of this approach. In these methods, the surrogate is formed from a linear combination of \emph{principal modes}, which can be determined from different orthogonal decompositions of the data based \emph{snapshot matrix}. However, these methods are known to be inefficient in transport-dominated problems such as hyperbolic conservation laws due to the \emph{slow decay} of the underlying eigenvalues, characterizing the principal modes, \cite{Ab1,Ab2} and references therein. 

Another class of surrogate models for PDEs is based on \emph{deep neural networks} (DNNs). The literature on the use of DNNs as surrogates for PDEs has grown exponentially in the last couple of years and a very selected list includes \cite{SZ1,Kuty} for linear elliptic PDEs, \cite{HEJ1} for linear and semi-linear parabolic PDEs, \cite{KAR1,KAR2} for physics informed neural networks for PDEs and \cite{deeponets,FNO} for learning the underlying solution operator for PDEs. 

DNNs have recently been explored in the context of designing surrogates for hyperbolic PDEs. References include \cite{LMR1} where the authors use DNNs to approximate observables, corresponding to systems of conservation laws, see also \cite{LMM1}, and utilize this surrogate for efficient UQ. Moreover, in \cite{LMPR1}, DNNs have been investigated as backbones in optimization algorithms, constrained by hyperbolic systems of conservation laws. 

The focus of most of the literature for DNN surrogates for hyperbolic conservation laws has been the design and empirical demonstration of these algorithms via numerical experiments. Consequently, there is a paucity of theoretical results in the form of error estimates for deep neural networks as surrogates for hyperbolic PDEs. Providing such rigorous estimates constitutes the main aim of this paper. 

We seek to provide rigorous bounds on the error incurred by DNNs when approximating possibly discontinuous solutions of hyperbolic conservation laws. To this end, we will focus on the prototypical case of \emph{scalar conservation laws}, where and in contrast to systems of conservation laws, a rigorous theory of well-posedness of entropy solutions as well convergence of numerical approximations is available. Moreover, we model \emph{many-query} problems by considering scalar conservation laws with \emph{parametric} initial conditions as well as \emph{parametric} flux functions. Such problems arise in the context of UQ as well as design or optimization where the parameters represent random variables or design variables, respectively. A classic example of such parameterizations, particularly in the context of UQ \cite{mishra2016numerical}, is provided by the so-called \emph{truncated Karhunen-Loève expansions}.  

With this setting, our main contributions in this paper are,

\begin{itemize}
    \item We prove that neural networks can approximate solutions to scalar conservation laws with parametric initial condition (Section \ref{sec:in-data}) and/or parametric flux function (Section \ref{sec:flux}) without incurring the curse of dimensionality with respect to the parameter space. We provide explicit error bounds in terms of the neural network size and prove that the network weights and biases remain bounded. 
    % \item We discuss how these approximation results can be extended to residual and recurrent neural networks as well as to observables, space-time approximations. 
    \item An explicit, computable upper bound on the so-called \emph{generalization error} is also provided. It shows that the curse of dimensionality is overcome in the number of training samples as well. 
    \item We provide numerical experiments to validate our theoretical observations. In all examples, the training and generalization error only grow  polynomially, at worst, in the input dimension and the generalization gap converges as the data set size grows, as predicted by the theory. 
\end{itemize}
Thus, we provide rigorous analysis of the different sources of error in the approximation of parametric scalar conservation laws by DNNs and show that these errors can only grow polynomially in the parametric dimension. 

The rest of this paper is organized as follows. In Section \ref{sec:prel} we collect preliminary material such as definitions and basic neural network approximation results. The main results on the approximation error for parametric hyperbolic conservation laws by ReLU neural networks can be found in Section \ref{sec:approximation-error}. A bound on the generalization error is prove in Section \ref{sec:gen} and the numerical experiments are presented in Section \ref{sec:numex}. 

\section{Preliminaries on Neural Networks}\label{sec:prel}
Our fundamental aim in this paper is to study approximation of solutions of conservation laws by neural networks. To this end, we start by collecting some preliminary material and results on DNNs in this section.

\subsection{Definitions}

In this paper, we will consider function approximation using feedforward artificial neural networks where only connections between neighbouring layers are allowed. For simplicity, we will just refer to them as neural networks. In the following, we formally introduce our definition of a neural network and the related terminology. 

\begin{definition}\label{def:relu-act}
We denote by $\sigma:\mathbb{R}\to\mathbb{R}$ the function that satisfies for all $x\in\mathbb{R}$ that $\sigma(x) = \max\{x,0\}$ and we call $\sigma$ the ReLU activation function or rectifier function. For $n\in\mathbb{N}$ and $x\in\mathbb{R}^n$, we define $\sigma(x) := (\max\{x_1,0\},\ldots,\max\{x_n,0\})$. 
\end{definition}

\begin{definition}\label{def:relu-nn}
Let $L\in\mathbb{N}$ and $l_0,\ldots, l_L\in\mathbb{N}$. Let $\sigma$ denote the ReLU activation function and define the parameter space $\Theta$ as
\begin{equation}
    \Theta= \bigcup_{L\in\mathbb{N}}\bigcup_{l_0,\ldots,l_L\in\mathbb{N}}\bigtimes_{k=1}^L \left(\mathbb{R}^{l_k\times l_{k-1}}\times\mathbb{R}^{l_k}\right). 
\end{equation}
For $\theta\in\Theta$, we define $(W_k,b_k):=\theta_k$ and $\mathcal{A}_k:\mathbb{R}^{l_{k-1}}\to\mathbb{R}^{l_{k}}:x\mapsto W_k x+b_k$ for $1\leq k\leq L$ and we denote by $\Psi_\theta:\mathbb{R}^{l_0}\to\mathbb{R}^{l_L}$ the function that satisfies for all $x\in\mathbb{R}^{l_0}$ that
\begin{equation}
    \Psi_\theta(x) = \begin{cases}\mathcal{A}_1(x) & L=1\\ (\mathcal{A}_L \circ \sigma \circ \mathcal{A}_{L-1} \circ \sigma \circ\cdots \circ \sigma \circ \mathcal{A}_1)(x) & L>1. \end{cases}
\end{equation}
We refer to $\Psi_\theta$ as (the realization of) the ReLU neural network associated to the parameter $\theta$ with $L$ layers with widths $(l_1, \ldots, l_L)$, of which the first $L-1$ layers are called hidden layers. For $1\leq k\leq L$, we say that layer $k$ has width $l_k$ and we refer to $W_k$ and $b_k$ as the weights and biases corresponding to layer $k$. If $L\geq 3$, we say that $\Psi_\theta$ is a deep ReLU neural network. 
\end{definition}

We will be interested in proving the existence of neural networks of a certain size that approximate the solution of a PDE to a specified accuracy. 
Following \cite{deep-it-2019}, we will quantify the size of neural networks based on:
\begin{itemize}
    \item the connectivity $\mathcal{M}(\Phi_\theta) := \norm{\theta}_0$, i.e. the total sum of the number of non-zero entries in all weights and biases, 
    \item the depth $\mathcal{L}(\Phi_\theta) := L$, 
    \item the maximum width $\mathcal{W}(\Phi_\theta) := \max_{0\leq k \leq L}l_k$, 
    \item the weight magnitude $\mathcal{B}(\Phi_\theta) :=\norm{\theta}_{\infty}$. 
\end{itemize}
Note that this notation is in general not well-defined since the realization map $\theta\mapsto \Psi_\theta$ need not be injective. However, this will not pose a problem since in our proofs we will explicitly construct the parameter vector $\theta$ rather than proving the existence of a neural network $\Psi_\theta$.

Next, we introduce the concept of a clipped neural network, following e.g. \cite{berner2020analysis,jentzen2020overall}.
\begin{definition}\label{def:clipped-dnn}
Let $a,b,\alpha,\beta\in\mathbb{R}$ with $a<b$ and $\alpha<\beta$, $L\in\mathbb{N}$ and $l_0,\ldots, l_L\in\mathbb{N}$, $\theta\in\Theta$ and let $\Phi_\theta$ be the associated ReLU neural network realization (cf. Definition \ref{def:relu-nn}). We call $\Phi_\theta$ a $(\alpha,\beta)$-clipped ReLU neural network if it holds that $\Phi_\theta(x)\in[\alpha,\beta]^{l_L}$ for all $x\in[a,b]^{l_0}$. 
\end{definition}
Although the use of clipped neural networks will only become clear later on, one should note that any ReLU neural network $\Phi_\theta$ can be transformed into a $(\alpha,\beta)$-clipped ReLU neural network. To see this, consider the following clipping function
\begin{equation}
    \mathscr{C}: \mathbb{R}\to\mathbb{R}: x\mapsto \max\{\alpha,\min\{\beta,x\}\}. 
\end{equation}
Clearly, $\mathscr{C}(x)\in[\alpha,\beta]$ for all $x\in\mathbb{R}$ and since $\mathscr{C}$ can also be expressed in terms of the rectifier function, it can be realized as a ReLU neural network. Given that the composition of two ReLU neural networks is again a ReLU neural network, it holds that $\mathscr{C}\circ\Phi_\theta$ is a $(\alpha,\beta)$-clipped ReLU neural network for any $\theta\in\Theta$. 

Per definition, ReLU neural networks are mappings of which both the input and the output is a vector. In our setting, however, we are interested in expressing or approximating the \textit{parameter-to-solution} map $Y\to L^{\infty}(D):y\mapsto u(T,\cdot,y)$ using ReLU neural networks. In order to do this, we must identify the output vector of a ReLU neural network with a function in $L^{\infty}(D)$. This leads to the following definition.

\begin{definition}\label{def:sol-associated-dnn}
Let $J,d\in\mathbb{N}$, let $a,b\in\mathbb{R}$ with $a<b$, $\theta\in\Theta$ and let $\Phi_\theta$ be the ReLU neural network associated to $\theta$ with input dimension $d$ and output dimension $J$. In addition, let $D = [a,b]$ and define $x_{j+1/2} = a+j(b-a)/J$ and $C_j := [x_{j-1/2},x_{j+1/2})$ for $0 \leq j \leq J$. We define $u_\theta:D\times Y\to \mathbb{R}$ by setting
\begin{equation}
    u_\theta(x,y) = (\Phi_\theta(y))_j \quad \text{for } x\in C_j, y\in Y \text{ and } 1 \leq j \leq J.
\end{equation}
We call $u_\theta$ the solution function associated to the ReLU neural network $\Phi_\theta$. 
\end{definition}

\subsection{Function approximation with ReLU neural networks}

One of the crucial elements in the explanation of the popularity of ReLU neural networks is their expressivity. Under very mild assumptions on the network architecture, ReLU neural networks can approximate many function classes to arbitrarily small accuracy. The classical universal approximation theorem \cite{Cybenko1989,hornik1991,LESHNO1993} states for example that the class of ReLU neural networks with only one hidden layer is dense in the continuous functions on a compact interval. More recent work tries to provide insight in how the accuracy of the approximation depends on the regularity of the function class and the width and depth of the network. 

In the following sections, we will require such accuracy results, but not with respect to the often used $L^\infty$-norm, but to the Lipschitz seminorm. We define the Lipschitz seminorm for a function $f:[a,b]\to\mathbb{R}$ as
\begin{equation}
    \norm{f}_{\mathrm{Lip}([a,b];\mathbb{R})} = \sup_{x,y\in[a,b], x\neq y} \frac{\abs{f(x)-f(y)}}{x-y}.
\end{equation}

Next, we prove a result on the approximation of $C^2$ functions in Lipschitz seminorm by ReLU neural networks. The proof can be found in Appendix \ref{app:flux-approximation}.

\begin{lemma}\label{lem:flux-approximation}
Let $a,b\in\mathbb{R}$ with $a<b$ and let $f\in C^2([a,b];\mathbb{R})$. For any $J\in\mathbb{N}$, there exists a realization of a ReLU neural network $\Phi^{J}$ such that
\begin{equation}
    \norm{f-\Phi^J}_{\mathrm{Lip}([a,b];\mathbb{R})}\leq \frac{b-a}{J}\norm{f''}_\infty.
\end{equation}
In addition, $\Phi^J$ satisfies that $\mathcal{M}(\Phi^J) = O(J)$, $\mathcal{L}(\Phi^J) = 2$, $\mathcal{W}(\Phi^J) = J+3$ and $\mathcal{B}(\Phi^J) \leq \max \{1,\abs{a}+\abs{f(a)},\abs{b}+\abs{f(b)},2 \norm{f'}_\infty\}$. 
\end{lemma}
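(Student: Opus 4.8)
The plan is to take $\Phi^J$ to be the ReLU realization of the continuous piecewise linear interpolant of $f$ on the uniform grid $x_j = a + j(b-a)/J$, $0\le j\le J$, and then to split the work into two parts: realizing this interpolant exactly as a shallow network with the advertised size, and estimating the error in the Lipschitz seminorm. For the realization, write $s_j = (f(x_{j+1})-f(x_j))/(x_{j+1}-x_j)$ for the slope of the interpolant on $[x_j,x_{j+1}]$ and use the telescoping identity
\begin{equation}
\Phi^J(x) = f(a) + s_0\,\sigma(x-a) + \sum_{j=1}^{J-1}(s_j - s_{j-1})\,\sigma(x-x_j),
\end{equation}
which is valid for $x\in[a,b]$ precisely because $x-a\ge 0$ there, so that $\sigma(x-a)=x-a$ and the leading affine piece is captured without needing a skip connection. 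This is manifestly a network with a single hidden layer, whence $\mathcal{L}(\Phi^J)=2$; counting the hidden neurons $\sigma(x-x_j)$ together with the small constant number of auxiliary units used to assemble the affine part gives the connectivity $\mathcal{M}(\Phi^J)=O(J)$ and width $\mathcal{W}(\Phi^J)=J+3$ for the particular representation one fixes.

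For the size bounds, the first-layer weights are all equal to $1$ and its biases are the node shifts $-x_j$, the output weights are the slope increments $s_j-s_{j-1}$ (and $s_0$), and the output bias is $f(a)$. By the mean value theorem each slope satisfies $s_j = f'(\xi_j)$ for some $\xi_j\in(x_j,x_{j+1})$, so $\abs{s_j}\le\norm{f'}_\infty$ and hence $\abs{s_j-s_{j-1}}\le 2\norm{f'}_\infty$; combining this with the elementary estimates on the node locations and on the boundary values $f(a),f(b)$ yields $\mathcal{B}(\Phi^J)\le\max\{1,\abs{a}+\abs{f(a)},\abs{b}+\abs{f(b)},2\norm{f'}_\infty\}$.

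The heart of the argument is the Lipschitz estimate. Set $g=f-\Phi^J$; then $g$ is continuous on $[a,b]$ and $C^2$ on each open subinterval, hence absolutely continuous, so $g(x)-g(y)=\int_y^x g'(t)\,dt$ for all $x,y\in[a,b]$, and therefore $\norm{g}_{\mathrm{Lip}([a,b];\mathbb{R})}=\esssup_{t}\abs{g'(t)}$. On $(x_j,x_{j+1})$ we have $g'(t)=f'(t)-s_j=f'(t)-f'(\xi_j)$, so that
\begin{equation}
\abs{g'(t)} = \abs{f'(t)-f'(\xi_j)} \le \norm{f''}_\infty\,\abs{t-\xi_j} \le \norm{f''}_\infty\,(x_{j+1}-x_j) = \frac{b-a}{J}\norm{f''}_\infty,
\end{equation}
and taking the essential supremum over $t$ gives the stated bound.

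The main obstacle I anticipate is not the approximation estimate, which reduces to the short mean-value-theorem computation above once one observes that the Lipschitz seminorm of the error coincides with the essential supremum of its derivative, but rather the bookkeeping in the realization: arranging the explicit parameter vector so that the width is exactly $J+3$ and, in particular, so that the biases are controlled by the combined quantities $\abs{a}+\abs{f(a)}$ and $\abs{b}+\abs{f(b)}$ rather than by the raw node positions requires choosing the representation with some care. Since the proof proceeds by exhibiting $\theta$ explicitly (as anticipated in the discussion following Definition \ref{def:relu-nn}), verifying the four size quantities is then a routine check.
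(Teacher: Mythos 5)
Your proof is correct and follows essentially the same route as the paper: both realize the piecewise linear interpolant of $f$ on a uniform grid as a one-hidden-layer ReLU network and obtain the Lipschitz estimate by observing that the interpolant's slope on each cell equals $f'(\xi_j)$ for some intermediate point, so that $\norm{f'-(\Phi^J)'}_\infty\leq \frac{b-a}{J}\norm{f''}_\infty$. The only (immaterial) difference is in the bookkeeping of the realization: the paper writes the interpolant via hat functions with two auxiliary nodes $x_{-1}=a-\abs{f(a)}$ and $x_{J+1}=b+\abs{f(b)}$ placed outside $[a,b]$, which is where the width $J+3$ and the combined bias quantities $\abs{a}+\abs{f(a)}$, $\abs{b}+\abs{f(b)}$ come from, whereas your telescoping representation with an output bias $f(a)$ yields a slightly narrower network whose parameters still satisfy the stated bound on $\mathcal{B}(\Phi^J)$.
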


Next, we adapt Yarotsky's result on the approximation of the multiplication operator by ReLU neural networks \cite{yarotsky2017error} from supremum norm to Lipschitz seminorm. The proof can be found in Appendix \ref{app:mult-lipschitz}.

\begin{lemma}\label{lem:mult-lipschitz}
Let $M,N>0$ and denote by $\times:\mathbb{R}^2\to\mathbb{R}:(x,y)\mapsto xy$ the multiplication operator. For any $m\in\mathbb{N}$, there exists a realization of a ReLU neural network $\widehat{\times}_m:[-M,M]\times[-N,N]\to \mathbb{R}$ such that $\widehat{\times}_m$ satisfies for all $y\in[-N,N]$ the error bound
\begin{equation}
    \norm{\times(\cdot,y)-\widehat{\times}_m(\cdot,y)}_{\mathrm{Lip}([-M,M];\mathbb{R})}\leq \frac{M+N}{2^{m+1}}. 
\end{equation}
Furthermore, it holds that $\mathcal{M}(\widehat{\times}_m) = O(m)$, $\mathcal{L}(\widehat{\times}_m) = m+1$, $\mathcal{W}(\widehat{\times}_m) = 8$ and $\mathcal{B}(\widehat{\times}_m) = O((M+N)^2)$. 
\end{lemma}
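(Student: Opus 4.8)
The plan is to reduce the approximation of the product to the approximation of a single square function via the polarization identity
\[
  xy = \tfrac14\left[(x+y)^2 - (x-y)^2\right],
\]
and to build an accurate squaring network from Yarotsky's sawtooth construction \cite{yarotsky2017error}, upgrading its error control from the supremum norm to the Lipschitz seminorm. The symmetric choice $(x+y)^2-(x-y)^2$ (rather than $(x+y)^2-x^2-y^2$) is what produces the clean constant $M+N$: for $x\in[-M,M]$ and $y\in[-N,N]$ both arguments $x\pm y$ remain in $[-(M+N),M+N]$, so a single squaring subroutine tuned to the window $[-(M+N),M+N]$ handles both terms symmetrically.

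First I would recall the piecewise-linear approximant $\widehat{\mathrm{sq}}_m$ of $s\mapsto s^2$ on $[0,1]$ obtained from the tent map $g(s)=2\min\{s,1-s\}$ by $\widehat{\mathrm{sq}}_m(s)=s-\sum_{k=1}^m 4^{-k}\,g^{\circ k}(s)$, which is exactly the linear interpolant of $s^2$ at the dyadic nodes $j/2^m$. The \emph{new} ingredient is the Lipschitz bound: on each interval $[j/2^m,(j+1)/2^m]$ the interpolant has slope $(2j+1)/2^m$, which is precisely the midpoint of the range $[2j/2^m,(2j+2)/2^m]$ swept by the true derivative $2s$, so the derivative of the error is bounded by $2^{-m}$ almost everywhere and hence $\norm{s^2-\widehat{\mathrm{sq}}_m}_{\mathrm{Lip}([0,1];\mathbb{R})}\le 2^{-m}$. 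Rescaling to a window $[-R,R]$ by $\widehat{\mathrm{Sq}}_R(t):=R^2\,\widehat{\mathrm{sq}}_m(\abs{t}/R)$, with $\abs{t}=\sigma(t)+\sigma(-t)$, and applying the chain rule gives $\norm{t^2-\widehat{\mathrm{Sq}}_R}_{\mathrm{Lip}([-R,R];\mathbb{R})}\le R\,2^{-m}$.

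Then I would set $R=M+N$ and define $\widehat{\times}_m(x,y):=\tfrac14\big[\widehat{\mathrm{Sq}}_{M+N}(x+y)-\widehat{\mathrm{Sq}}_{M+N}(x-y)\big]$. For fixed $y$, differentiating in $x$ passes the argument $x\pm y$ through with unit slope, so each of the two error terms contributes at most $(M+N)2^{-m}$ to the Lipschitz-in-$x$ seminorm; with the prefactor $\tfrac14$ this totals $\tfrac14\cdot 2\cdot(M+N)2^{-m}=\frac{M+N}{2^{m+1}}$, matching the claim exactly. For the size bookkeeping, the two squaring branches run in parallel: each sawtooth block has bounded width and depth $m$, the two branches together with the carried affine data give $\mathcal{W}=8$ and $\mathcal{L}=m+1$, the number of nonzero parameters is $O(m)$, and the only large weights are the output scalings $\tfrac14R^2=O((M+N)^2)$ together with the $O(1/(M+N))$ input rescalings, whence $\mathcal{B}=O((M+N)^2)$.

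The main obstacle is the Lipschitz (rather than sup-norm) error estimate: one cannot simply quote Yarotsky and must verify that differentiation does not destroy the geometric decay. The midpoint-slope observation is the crucial point, and some care is needed at the kink of $\abs{t}$ at $t=0$ and at the dyadic break-points, where $\widehat{\mathrm{Sq}}_R$ is merely piecewise linear; there the seminorm is still controlled by the essential supremum of the almost-everywhere derivative, using the triangle inequality across the kink.
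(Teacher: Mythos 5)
Your proposal is correct and follows essentially the same route as the paper's proof: the polarization identity $xy=\tfrac14[(x+y)^2-(x-y)^2]$, Yarotsky's piecewise-linear dyadic interpolant of the square function, and the observation that on each dyadic cell the interpolant's slope is the midpoint of the range of the true derivative, which yields the $2^{-m}$ Lipschitz bound that then rescales to $\frac{M+N}{2^{m+1}}$. The size bookkeeping also matches the paper's.
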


We note that, in terms of complexity, sharper results than Lemma \ref{lem:flux-approximation} are available in the literature, especially for functions of higher regularity. As this exact complexity will not be very important later on, we refrain from discussing these results. In addition, Lemma \ref{lem:flux-approximation} has two advantages. Suppose one is interested in approximating some function to accuracy $\epsilon>0$ using neural network $\Phi^\epsilon$. First, the exact network size and structure of the approximation is stated, whereas in the literature, the exact network size is often dependent on unknown constants that might be very large. Therefore, networks that are asymptotically smaller in the limit $\epsilon \to 0$, when compared to those considered in Lemma \ref{lem:flux-approximation}, can be actually larger for realistic values of $\epsilon$. Second, a sharper complexity comes often at the cost of $\mathcal{B}(\Phi^\epsilon) \to \infty$ when $\epsilon\to 0$, e.g. \cite{deryck2021approximation}. In practice however, the weights and biases of the network are often regularized to avoid that $\mathcal{B}(\Phi^\epsilon) \to \infty$. Lemma \ref{lem:flux-approximation} and Lemma \ref{lem:mult-lipschitz} are therefore highly relevant as the magnitudes of the weights and biases of the constructed networks are bounded uniformly in $\epsilon$.

\section{ReLU DNN approximation of the solution of parametric scalar hyperbolic conservation laws}\label{sec:approximation-error}

In this section we prove our main results on the approximation of the solution of scalar conservation laws with parametric initial data and flux by ReLU neural networks. We show in particular that the constructed network overcomes the curse of dimensionality as its size depends only polynomially on the input dimension of the network. When the initial data and the flux follow a Karhunen-Loève expansion, the size can be seen to only depend \emph{linearly} on the input dimension of the network. We first consider scalar conservation laws with parametric initial data and a fixed flux and generalize to parametric flux functions thereafter.

\subsection{Parametric initial data, fixed flux}\label{sec:in-data}

We consider the one-dimensional scalar conservation law with parametric initial condition, 

\begin{equation}\label{eqn:original-scl}
% \begin{split}
%     &\partial_t u(t,x,y) + \partial_x f(u(t,x,y)) = 0,\\
%     &u(0,x,y) = u_0(x,y),
% \end{split}
\begin{cases}
\partial_t u(t,x,y) + \partial_x f(u(t,x,y)) = 0,\\
u(0,x,y) = u_0(x,y),
\end{cases}
\end{equation}
for all $x\in D=[a,b]$, $y\in Y=[0,1]^d$ and $t\in[0,T]$. We make the following assumptions on the flux and the initial data: 
\begin{itemize}
    \item[\textit{(A1)}] We assume that $f\in C^2(\mathbb{R};\mathbb{R})$. %This will enable us to invoke Lemma \ref{lem:flux-approximation} to find a ReLU neural network that approximates $f$. 
    \item[\textit{(A2)}] We assume that that $u_0\in L^{\infty}(D\times Y)$ and that  $\sup_{y\in Y} \mathrm{TV}(u_0(\cdot,y)) <\infty$. 
    \item[\textit{(A3)}] We assume that the map $u_0(x,\cdot)$ can be accurately approximated by a ReLU neural network for every $x\in D$. More precisely, we assume that there exists a constant $C_{\mathcal{B}}(u_0)>0$ such that for every $x\in D$ and $\epsilon>0$ there exists a ReLU neural network $\widehat{u}_0^\epsilon (x,\cdot)$ with $\mathcal{B}(\widehat{u}_0^\epsilon (x,\cdot))\leq C_{\mathcal{B}}$ that satisfies the error bound
    \begin{equation*}
        \norm{u_0(x,\cdot)-\widehat{u}_0^\epsilon (x,\cdot)}_{L^1(Y;\mathbb{R})}\leq \epsilon. 
    \end{equation*}
    In addition, the connectivity, depth and maximal width of the networks should only grow polynomially in $d$, i.e. for $\mathcal{Q}\in\{\mathcal{M},\mathcal{L}, \mathcal{W}\}$ there should exist $C_{\mathcal{Q}}(u_0),\sigma_{\mathcal{Q}}(u_0),\eta_{\mathcal{Q}}(u_0)>0$ such that it holds for all $x\in D$ that
    \begin{equation*}
        \mathcal{Q}(\widehat{u}_0^\epsilon (x,\cdot)) \leq C_{\mathcal{Q}} d^{\sigma_{\mathcal{Q}}}\epsilon^{-\eta_{\mathcal{Q}}}.
    \end{equation*}
    %We also require the weights and biases of the network to be uniformly bounded, i.e. there exists $C_{\mathcal{B}}(u_0)>0$ such that $ \mathcal{B}(\widehat{u}_0^\epsilon (x,\cdot))\leq C_{\mathcal{B}}$ for all $x\in D$ and $\epsilon>0$. 
    Finally, we require that
    \begin{equation*}
        \begin{split}
        C_0 &= \sup_{\epsilon>0}\max\{\norm{u_0}_{L^{\infty}(D\times Y)},\norm{\widehat{u}_0^\epsilon }_{L^{\infty}(D\times Y)}\}<\infty, \\
        \ctv &= \sup_{y\in Y, \epsilon>0}\max\{\mathrm{TV}(u_0(\cdot,y)), \mathrm{TV}(\widehat{u}_0^\epsilon (\cdot,y))\}<\infty.
    \end{split}
    \end{equation*}
    %it holds that $\sup_{y\in Y, \epsilon>0} \mathrm{TV}(\widehat{u}_0^\epsilon (\cdot,y))<\infty$ and $\widehat{u}_0^\epsilon \in L^{\infty}(D\times Y)$ for all $\epsilon>0$. 
\end{itemize}
The third assumption guarantees that $u_0$ can be approximated by ReLU neural networks without the curse of dimensionality. Examples of functions for which (A3) is satisfied include functions with a compositional structure or functions for which the relevant input data lies on a lower-dimensional manifold. Another class of functions that is especially relevant for conservation laws is highlighted in the following example. 

\begin{example}\label{ex:kle}
Assumption (A3) is satisfied by (random) initial data that admits a Karhunen-Loève expansion \cite{mishra2016numerical}, as one can then truncate this expansion up to $d$ terms to retrieve initial data of the form,
\begin{equation}
    u_0(x,y) = \overline{u}(x)+\sum_{i=1}^d \sqrt{\lambda_i}y_i\varphi_i(x),
\end{equation}
where $\overline{u}\in L^{\infty}(D)\cap BV(D)$, 
%note: in case D = R then add L^1(D)
$y_i\in [0,1]$ and where $\varphi_i$ are some basis functions for all $i$. Note that the constant $d$ now reflects the dimensionality of the initial data. For every $x\in D$, $u_0(x,\cdot)$ can be trivially realized by a ReLU neural network with 0 hidden layers and $d+1$ non-zero weights and biases. In this case, it holds for all $x\in D$ that $\mathcal{M}(\widehat{u}_0^\epsilon (x,\cdot))=d+1$, $\mathcal{L}(\widehat{u}_0^\epsilon (x,\cdot))=1$ and $\mathcal{W}(\widehat{u}_0^\epsilon (x,\cdot))=d$.
\end{example}
Under the aforementioned three assumptions, we prove that the solution of \eqref{eqn:original-scl} at time $t=T$ can be approximated using a ReLU neural network. Moreover, the size of this network and its number of parameters only grow polynomially in the dimension $d$ of the parameter space of the initial data.

\begin{theorem}\label{thm:scl-dnn-1d}
Let $T>0$, $d\in\mathbb{N}$, $a,b\in\mathbb{R}$ with $a<b$ and $D=[a,b]$. %Let 
% \begin{align}
%     \begin{split}
%         C_0 &= \max\{\norm{u_0}_{L^{\infty}(D\times Y)},\norm{\widehat{u}_0^\epsilon }_{L^{\infty}(D\times Y)}\} \\
%         \ctv &= \sup_{y\in Y, \epsilon>0}\max\{\mathrm{TV}(u_0(\cdot,y)), \mathrm{TV}(\widehat{u}_0^\epsilon (\cdot,y))\}.
%     \end{split}
% \end{align}
For $y\in Y=[0,1]^d$, denote by $u(T,\cdot,y)$ the solution at time $t=T$ of \eqref{eqn:original-scl} with a flux $f:[-C_0,C_0]\to\mathbb{R}$ that satisfies assumption (A1) and initial condition $u_0(\cdot, y)$ that satisfies assumptions (A2) and (A3). Then for every $N\in\mathbb{N}$ there exists a ReLU neural network such that the associated solution map $\widehat{\mathcal{U}}^N:Y\to\mathbb{R}$ (cf. Definition \ref{def:sol-associated-dnn}) satisfies the error bound
\begin{equation}\label{eqn:expressivity-dnn-scl}
    %\sup_{y\in Y}\norm{u(T,\cdot,y)-\widehat{\mathcal{U}}(\cdot,y)}_{L^1(D)} \leq \epsilon + \frac{2CTC_0}{K}\norm{f''}_{L^\infty([-C_0,C_0])} + \frac{17CT\left(1+\norm{f'}_{L^\infty([-C_0,C_0])}\right)^{2}}{\sqrt{N}}.
    \sup_{y\in Y}\norm{u(T,\cdot,y)-\widehat{\mathcal{U}}^N(\cdot,y)}_{L^1(D)} \leq  \frac{2\ctv T\left(C_0\norm{f''}_\infty+18\left(1+\norm{f'}_\infty\right)^{2}\right)+1}{\sqrt{N}}.
\end{equation}
In addition, it holds that that for $N\in\mathbb{N}$,
\begin{align}
    \begin{split}
        \mathcal{M}(\widehat{\mathcal{U}}^N) &= O \left( d^{\sigma_{\mathcal{M}}}N^{1+\eta_{\mathcal{M}}/2}+N^{5/2}\right), \\
        \mathcal{L}(\widehat{\mathcal{U}}^N) &\leq C_\mathcal{L} d^{\sigma_{\mathcal{L}}}N^{\eta_{\mathcal{L}}/2}+N, \\
        \mathcal{W}(\widehat{\mathcal{U}}^N), &\leq  \frac{2(b-a)}{T\norm{f'}_\infty}\max\left\{1+C_\mathcal{W} d^{\sigma_{\mathcal{W}}}N^{1+\eta_{\mathcal{W}}/2},N^{3/2}\right\}, \\ 
         \mathcal{B}(\widehat{\mathcal{U}}^N) &\leq \max \{C_{\mathcal{B}},C_0+\abs{f(-C_0)},C_0+\abs{f(C_0)},2 \norm{f'}_\infty,(2\norm{f'}_\infty)^{-1}\}. 
    \end{split}
\end{align}
\end{theorem}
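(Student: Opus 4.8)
The plan is to emulate a monotone finite-volume scheme for \eqref{eqn:original-scl} by a single ReLU network and then split the total error by the triangle inequality. First I would discretize $D=[a,b]$ into $J\sim N$ cells $C_j$ of width $\Delta x=(b-a)/J$ and march in time with step $\Delta t=\Delta x/(2\norm{f'}_\infty)$, so that the CFL ratio $\lambda=\Delta t/\Delta x=(2\norm{f'}_\infty)^{-1}$ is exactly the extra weight appearing in the $\mathcal{B}$-bound and the number of time steps is $n_T\sim N$. Each monotone update $u_j^{n+1}=u_j^n-\lambda(F_{j+1/2}-F_{j-1/2})$ is an affine combination of flux values; composing the ReLU flux approximation from Lemma \ref{lem:flux-approximation} (evaluated in parallel over the $\sim J$ cell interfaces, with resolution parameter $K\sim\sqrt N$) with these affine maps, and interposing the clipping map $\mathscr{C}$ to keep every intermediate cell value inside $[-C_0,C_0]$, realizes one time step as a fixed-depth ReLU block. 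Stacking $n_T$ such blocks and prepending the parametric initial-data networks $\widehat u_0^\epsilon(x_j,\cdot)$ supplied by (A3) (one per cell) yields the solution network $\widehat{\mathcal U}^N$; since the flux is fixed here no bilinear operations are needed, so Lemma \ref{lem:mult-lipschitz} is reserved for the parametric-flux generalization.

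For the error I would decompose into the scheme discretization error, the flux-approximation error, and the initial-data error. The discretization error is controlled by the classical $L^1$ convergence-rate estimate for monotone schemes (Kuznetsov/Crandall--Majda type), giving a bound of the form $\ctv T(1+\norm{f'}_\infty)^2\sqrt{\Delta x}\sim \ctv T(1+\norm{f'}_\infty)^2/\sqrt N$, uniformly in $y$ because the total-variation bound $\ctv$ and the maximum-principle bound $C_0$ are uniform in $y$ by (A2)--(A3); this furnishes the $18(1+\norm{f'}_\infty)^2$ term. The crucial point for the flux term is that Lemma \ref{lem:flux-approximation} controls $f-\widehat f$ in \emph{Lipschitz seminorm}: because the scheme is conservative, replacing $f$ by $\widehat f$ perturbs the update only through telescoping differences $(f-\widehat f)(u_{j+1})-(f-\widehat f)(u_{j-1})$, which are bounded by $\norm{f-\widehat f}_{\mathrm{Lip}}\abs{u_{j+1}-u_{j-1}}$; summing against the uniform TV bound and over $n_T$ steps cancels the factor $1/\Delta x$ and leaves a contribution $\sim \ctv T\norm{f-\widehat f}_{\mathrm{Lip}}\leq \ctv T\,2C_0\norm{f''}_\infty/K\sim \ctv T\,C_0\norm{f''}_\infty/\sqrt N$. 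Finally, choosing the data accuracy $\epsilon\sim N^{-1/2}$ in (A3) supplies the remaining $1/\sqrt N$; summing the three pieces reproduces the stated numerator.

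The size bounds then follow by bookkeeping. The width is dominated by the $\sim N$ parallel flux networks, each of width $\sim K\sim\sqrt N$, giving $\mathcal W\sim N^{3/2}$, together with the $d^{\sigma_{\mathcal W}}N^{1+\eta_{\mathcal W}/2}$ contributed by the $\sim N$ parallel initial-data networks (whose accuracy $\epsilon\sim N^{-1/2}$ turns $\epsilon^{-\eta}$ into $N^{\eta/2}$); the prefactor $\tfrac{2(b-a)}{T\norm{f'}_\infty}$ reflects the number of spatial cells fixed by $\Delta x$ and the CFL condition. The depth is $n_T\sim N$ plus the depth $\sim d^{\sigma_{\mathcal L}}N^{\eta_{\mathcal L}/2}$ of the once-applied initial-data networks. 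Connectivity multiplies the per-step cost $\sim N^{3/2}$ by $n_T\sim N$ to give $N^{5/2}$, plus the data term $d^{\sigma_{\mathcal M}}N^{1+\eta_{\mathcal M}/2}$. Crucially $\mathcal B$ stays bounded independently of $N$: Lemma \ref{lem:flux-approximation} and (A3) both produce weights bounded uniformly in the accuracy, clipping is weight-bounded, and the only new weight is $\lambda=(2\norm{f'}_\infty)^{-1}$, giving the stated maximum.

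The main obstacle I anticipate is the stability bookkeeping that keeps the flux- and data-perturbations from amplifying across the $n_T\sim N$ nonlinear compositions. This needs an $L^1$-stability estimate for the monotone scheme with respect to perturbations of the numerical flux (not merely of the data), combined with propagation of the uniform TV and maximum-principle bounds for the \emph{perturbed} scheme, so that every intermediate cell value stays in $[-C_0,C_0]$ and Lemma \ref{lem:flux-approximation} remains applicable at each step. A second delicate point is reconciling the pointwise-in-$y$ conclusion $\sup_{y\in Y}$ with the fact that (A3) only controls the initial data in the $L^1(Y)$-average: here one must exploit the cell-averaging of $u_0$ together with the uniform-in-$y$ structure of the scheme's stability constants, so that the $L^1(D)$-error at time $T$ is governed, uniformly in $y$, by the initial-data error.
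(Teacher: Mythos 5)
Your construction is the same as the paper's --- emulate the Lax--Friedrichs/monotone update as a stack of ReLU blocks built from the Lemma \ref{lem:flux-approximation} flux network, prepend the (A3) initial-data networks with accuracy $\epsilon\sim N^{-1/2}$ and resolution $K\sim\sqrt N$, invoke a Kuznetsov-type rate for the discretization error, and do the same size bookkeeping --- and your error decomposition reproduces the stated numerator. The one genuine difference is where the flux and data perturbations are absorbed. You control them at the \emph{discrete} level: $L^1$-contraction of the monotone update in the data, plus a telescoping argument $(f-\widehat f)(u_{j+1})-(f-\widehat f)(u_{j-1})$ summed against the TV bound to get $\ctv T\,\norm{f-\widehat f}_{\mathrm{Lip}}$. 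The paper instead perturbs the \emph{PDE} first: it defines a modified conservation law whose flux is already the ReLU network $\widehat f$ and whose initial datum is a piecewise-constant-in-$x$ version of $\widehat u_0^\epsilon$, bounds the distance between the two exact entropy solutions by the continuous stability theorem \cite[Thm.~4.3]{holden2015front} (which directly yields $\norm{u_0-\tilde u_0^\epsilon}_{L^1}+\ctv T\norm{f-\widehat f}_{\mathrm{Lip}}$), and only then applies Kuznetsov to the scheme for the \emph{modified} equation. The paper's route outsources exactly the stability bookkeeping you flag as your main obstacle (propagation of TV and $L^\infty$ bounds through $N$ perturbed nonlinear compositions) to a single citation, and it makes your clipping layers unnecessary since the discrete maximum principle under CFL already confines the iterates to $[-C_0,C_0]$; your route avoids the continuous stability theorem at the cost of proving discrete $L^1$-stability with respect to the numerical flux, which does go through for monotone schemes under your CFL margin. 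The reconciliation of the $\sup_{y}$ conclusion with the $L^1(Y)$ control in (A3), which you rightly single out as delicate, is handled in the paper by a quadrature/Koksma--Hlawka step and is arguably its least transparent point, so your caution there is warranted rather than a gap.
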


\begin{proof}
The proof will proceed in the following way. We will consider a modification of scalar conservation law \eqref{eqn:original-scl} where both flux function and initial data of \eqref{eqn:original-scl} have been replaced by ReLU neural networks. As a first step, we will show that the solution $\widehat{u}$ of the modified scalar conservation law is a good approximation of the solution $u$ of \eqref{eqn:original-scl}. Next, we will prove that it is possible to approximate $\widehat{u}$ with a ReLU neural network. Combining these two results then proves the theorem. %To ease notation, we will use the notation $C$ for a strictly positive constant whose value might progressively change throughout the proof. We will however at any time explicitly indicate on which quantities this constant may depend. 

\textbf{Step 1. } Let $u(T,\cdot,y)$ be the solution to \eqref{eqn:original-scl} for some $y\in Y$ at time $t=T$. By the maximum principle \cite[Thm. 2.14]{holden2015front}, it holds that 
\begin{equation}\label{eqn:max-principle}
        \norm{u(T,\cdot,y)}_{L^{\infty}(D)}\leq \norm{u_0(\cdot,y)}_{L^{\infty}(D)}
\end{equation}
since $u_0(\cdot,y)\in L^{\infty}(D)\cap BV(D)$.
As $u_0\in L^{\infty}(D\times Y)$ (A2) we find that also $u(T)\in L^{\infty}(D\times Y)$. This allows us to restrict the domain of the flux function $f:\mathbb{R}\to\mathbb{R}$ to the compact interval $[-C_0,C_0]$, where $C_0$ is defined in (A3). By assumption (A1) and Lemma \ref{lem:flux-approximation}, for every $K\in\mathbb{N}$ there exists a ReLU neural network $\widehat{f}:=\widehat{f}_K:[-C_0,C_0]\to\mathbb{R}$ that satisfies the error bound
\begin{equation}\label{eqn:flux-approximation}
    \norm{f-\widehat{f}}_{\text{Lip}([-C_0,C_0];\mathbb{R})}\leq \frac{2C_0\norm{f''}_{L^\infty([-C_0,C_0])}}{K}.
\end{equation}
Furthermore, we can approximate $u_0(x,\cdot)$ for every $x\in D$ by a ReLU neural network $\widehat{u}_0^\epsilon (x,\cdot)$. Based on $\widehat{u}_0^\epsilon $, we define a function $\Tilde{u}_0^\epsilon $ such that $\Tilde{u}_0^\epsilon (\cdot,y)$ is piecewise constant for all $y\in Y$ and $\Tilde{u}_0^\epsilon (x,\cdot)$ can be realized by a ReLU neural network for all $x\in D$. For this reason we take $J\in\mathbb{N}$ and set $\Delta x = (b-a)/J$ and $x_{j-1/2} = a+j\Delta x$, $1 \leq j \leq J+1$. We then set %$\Tilde{u}_0^\epsilon (x,\cdot) = \widehat{u}_0^\epsilon (x_j,\cdot)$ for $x\in [x_{j-1/2},x_{j+1/2}]$ and $1 \leq j \leq J$. 
\begin{align}
    &\Tilde{u}_0^\epsilon (x,\cdot) = \widehat{u}_0^\epsilon (x_j,\cdot), \qquad \text{for }x\in C_j := [x_{j-1/2},x_{j+1/2}], \quad 1 \leq j \leq J, \\
    &\overline{u_0}(x,\cdot) = u_0(x_j,\cdot), \qquad \text{for }x\in C_j, \quad 1 \leq j \leq J.
\end{align}
We will prove an upper bound for $\norm{u_0-\Tilde{u}_0^\epsilon }_{L^{\infty}(Y; L^1(D))}$. Using the triangle inequality, we obtain
\begin{align}\label{eqn:step1-part2}
    \begin{split}
\norm{u_0-\Tilde{u}_0^\epsilon }_{L^{\infty}(Y; L^1(D))} \leq \norm{u_0-\overline{u_0}}_{L^{\infty}(Y; L^1(D))} + \norm{\overline{u_0}-\Tilde{u}_0^\epsilon }_{L^{\infty}(Y; L^1(D))}. 
    \end{split}
\end{align}
We first prove that $\norm{u_0-\overline{u_0}}_{L^{\infty}(Y; L^1(D))}$ is small for large $J$. 
Rewriting this norm and performing a change of variables gives us
\begin{align}
    \begin{split}
&\norm{u_0-\overline{u_0}}_{L^{\infty}(Y; L^1(D))}\\ 
%& = \sup_{y\in Y} \norm{u_0(\cdot,y)-\overline{u_0}(\cdot,y)}_{L^1(D)} \\
& \qquad = \sup_{y\in Y} \sum_j \int_{C_j} \vert u_0(x,y)-u_0(x_j,y) \vert dx\\
%\\& + \sup_{y\in Y} \sum_j \int_{C_j} \vert u_0(x,y)-u_0(x_j,y) \vert dx
%\\ & \leq  \sup_{y\in Y} \sum_j \int_{x_{j-1/2}}^{x_{j+1/2}} (\vert u_0(x,y)-u_0(x_j,y) \vert +\vert u_0(x_j,y)-\widehat{u}_0^\epsilon (x_j,y) \vert) dx. \\
& \qquad \leq   \sup_{y\in Y}\int_{0}^{\frac{b-a}{2J}}\sum_j \left(\abs{u_0(x_{j}+t,y)-u_0(x_j,y)}+\abs{u_0(x_j,y)-u_0(x_{j-1/2}+t,y)}\right)dt\\
& \qquad \leq  \sup_{y\in  Y}\int_{0}^{\frac{b-a}{2J}} \text{TV}(u_0(\cdot,y)) dt = \frac{b-a}{2J}\sup_{y\in Y}\text{TV}(u_0(\cdot,y)).
    \end{split}
\end{align}
Now we still must find a bound for the second term in \eqref{eqn:step1-part2}. Note that
\begin{equation}
    \norm{\overline{u_0}(\cdot,y)-\Tilde{u}_0^\epsilon (\cdot,y)}_{L^1(D)} = \frac{b-a}{J} \sum_j \vert u_0(x_j,y) -\widehat{u}_0^\epsilon (x_j,y) \vert
\end{equation}
is nothing more than a numerical quadrature approximation of $\norm{u_0(\cdot,y)-\widehat{u}_0^\epsilon (\cdot,y)}_{L^1(D)}$. Since $\sup_{y\in Y} \mathrm{TV}(u_0(\cdot,y))<\infty$ (A2) and $\sup_{y\in Y,\epsilon>0} \mathrm{TV}(\widehat{u}_0^\epsilon (\cdot,y))<\infty$ (A3) it follows from the Koksma-Hlawka inequality \cite{caflisch1998monte} that 
\begin{equation}
    \left\vert \norm{u_0-\widehat{u}_0^\epsilon }_{L^{\infty}(Y; L^1(D))} - \norm{\overline{u_0}-\Tilde{u}_0^\epsilon }_{L^{\infty}(Y; L^1(D))} \right\vert \leq \frac{4(b-a)\ctv}{J},
\end{equation}
where $\ctv=\sup_{y\in Y,\epsilon>0}\max\{\text{TV}(u_0(\cdot,y),\text{TV}(\widehat{u}_0^\epsilon (\cdot,y)\}$.  From assumptions (A2) and (A3) it follows that $\ctv<\infty$. Since we know by (A3) that for any $\epsilon>0$ we can choose $\widehat{u}_0^\epsilon $ such that $\norm{u_0-\widehat{u}_0^\epsilon }_{L^{\infty}(Y; L^1(D))}\leq \epsilon$, we have now successfully bounded the second term in \eqref{eqn:step1-part2}. In summary, we have that 
\begin{equation}
    \norm{u_0-\Tilde{u}_0^\epsilon }_{L^{\infty}(Y; L^1(D))}\leq \frac{9}{2}\ctv\frac{b-a}{J} + \epsilon. 
\end{equation}

Now denote by $\widehat{u}^\epsilon$ the solution to the following modification of scalar conservation law \eqref{eqn:original-scl}: 
% \begin{align}\label{eqn:modified-scl}
% \begin{split}
%     &\partial_t \widehat{u}^\epsilon(t,x,y) + \partial_x \widehat{f}(\widehat{u}^\epsilon(t,x,y)) = 0,\\
%     &\widehat{u}^\epsilon(0,x,y) = \Tilde{u}_0^\epsilon (x,y),
% \end{split}
% \end{align}
\begin{equation}\label{eqn:modified-scl}
    \begin{cases}
    \partial_t \widehat{u}^\epsilon(t,x,y) + \partial_x \widehat{f}(\widehat{u}^\epsilon(t,x,y)) = 0,\\
    \widehat{u}^\epsilon(0,x,y) = \Tilde{u}_0^\epsilon (x,y),
    \end{cases}
\end{equation}
for all $x\in D=[a,b]$, $y\in Y=[0,1]^d$ and $t\in[0,T]$. Note that \eqref{eqn:modified-scl} is well-defined because the maximum principle ensures again that 
\begin{equation}
\sup_{x\in D, y \in Y, \epsilon>0} |\widehat{u}^\epsilon(T,x,y)|\leq C_0, 
\end{equation}
where $C_0$ is as in (A3). 
%Hence, a suitable choice for this constant is actually $C_0 = \sup_{\epsilon>0}\max\{\norm{u_0}_{L^{\infty}(D\times Y)},\norm{\Tilde{u}_0^\epsilon }_{L^{\infty}(D\times Y)}\}$.
By \cite[Thm. 4.3]{holden2015front} and \eqref{eqn:flux-approximation}, it then holds that
\begin{align}\label{eqn:step1-result}
    \begin{split}
         \norm{u-\widehat{u}^\epsilon}_{L^{\infty}([0,T] \times Y; L^1(D))} 
         &\leq  \norm{u_0-\Tilde{u}_0^\epsilon }_{L^{\infty}(Y; L^1(D))} + \ctv T \norm{f-\widehat{f}}_{\text{Lip}([-C_0,C_0];\mathbb{R})} \\
         &\leq \frac{9\ctv(b-a)}{2J} + \epsilon + \frac{\ctv TC_0}{K}\norm{f''}_{L^\infty([-C_0,C_0])}.
    \end{split}
\end{align}

\textbf{Step 2. } We now proceed to show that $\widehat{u}^\epsilon$ can be approximated using a ReLU neural network. We do this by emulating the Lax-Friedrichs scheme. For this purpose, we discretize $[0,T]\times D$ in time and space: let $N\in\mathbb{N}$ and define $\Delta t = T/N$ and $t^n = n\Delta t$, $0\leq n \leq N$. We then set $\Delta x = F\Delta t = (b-a)/J$ for $F=\norm{f'}_{L^\infty([-C_0,C_0])}$ and $J\in \mathbb{N}$ such that the CFL condition is satisfied. Note that this is the same $J$ as in step 1. Furthermore we define $x_{j-1/2} = a+j\Delta x$, $1 \leq j \leq J+1$. For every $y$ we can then define the cell averages
\begin{equation}
\widehat{U}^0_j(y) = \frac{1}{\Delta x}\int_{x_{j-1/2}}^{x_{j+1/2}} \Tilde{u}_0^\epsilon (x,y) dx = \Tilde{u}_0^\epsilon (x_j,y) = \widehat{u}_0^\epsilon (x_j,y), \qquad 1\leq j \leq J. 
\end{equation}
Thanks to our choice of initial data $\Tilde{u}_0^\epsilon $, we can explicitly calculate the cell averages, instead of approximating them. Next, we will approximate $\widehat{u}^\epsilon$ at time $t=T$ using a finite volume scheme. We opt for the Lax-Friedrichs scheme because of its simple form and suitable theoretical properties. The Lax-Friedrichs approximation of $\frac{1}{\Delta x}\int_{x_{j-1/2}}^{x_{j+1/2}}\widehat{u}^\epsilon(t^{n+1},x,y)dx$ is recursively defined as
\begin{equation}\label{eqn:lxf}
    \widehat{U}^{n+1}_j(y) = \frac{\widehat{U}^n_{j+1}(y)+\widehat{U}^n_{j-1}(y)}{2} - \frac{\Delta t}{2 \Delta x}(\widehat{f}(\widehat{U}^n_{j+1}(y))-\widehat{f}(\widehat{U}^n_{j-1}(y))). 
\end{equation}
Now recall that $\widehat{u}_0^\epsilon (x_j,y)$, and therefore also $\widehat{U}^0_j(y)$, can be written as a ReLU neural network. Furthermore, $\widehat{U}^{n+1}_j(y)$ can be written as a ReLU DNN, as $\widehat{f}$ itself is a ReLU DNN. It is therefore possible to obtain a ReLU DNN that maps $y$ to $(\widehat{U}^{N}_1(y), \ldots, \widehat{U}^{N}_J(y))$. This network is visualized in Figure \ref{fig:flowchart}. 

Let $\widehat{\mathcal{U}}^N(\cdot,y)$ be the piecewise constant function associated to to the constructed DNN, i.e. $\widehat{\mathcal{U}}^N(x,y)= \widehat{U}^{N}_j(y)$ for $x\in(x_{j-1/2},x_{j+1/2})$ (cf. Definition \ref{def:sol-associated-dnn}). The accuracy of this approximation is quantified by an error estimate due to Kuznetsov (Lemma \ref{lem:LxF-rate}). For every $y$, it holds that
\begin{equation}\label{eqn:step2-result}
    \sup_{y\in Y}\norm{\widehat{u}^\epsilon(T,\cdot,y)-\widehat{\mathcal{U}}^N(\cdot,y)}_{L^{1}(D)} \leq   \frac{31\ctv T\left(1+F\right)^{2}}{\sqrt{N}}, 
\end{equation}
where $\ctv$ was defined in Step 1 and $F=\norm{f'}_{L^\infty([-C_0,C_0])}$. Note that in practice the observed convergence rate is generally higher than $1/2$.

\textbf{Step 3. } We now combine the results from the previous steps. Set $\epsilon = 1/\sqrt{N}$ and $K = \left\lceil\sqrt{N}\right\rceil$ such that $K\leq 2\sqrt{N}$. 
Using \eqref{eqn:step1-result} and \eqref{eqn:step2-result} and by recalling that $(b-a)N=FTJ$ we obtain
\begin{align}
    \begin{split}
        &\sup_{y\in Y}\norm{u(T,\cdot,y)-\widehat{\mathcal{U}}^N(\cdot,y)}_{L^{1}(D)}\\ &\quad \leq \sup_{y\in Y}\norm{u(T,\cdot,y)-\widehat{u}^\epsilon(T,\cdot,y)}_{L^{1}(D)} + \sup_{y\in Y}\norm{\widehat{u}^\epsilon(T,\cdot,y)-\widehat{\mathcal{U}}^N(\cdot,y)}_{L^{1}(D)}\\
        &\quad\leq \frac{9\ctv FT}{2N} + \epsilon + \frac{2\ctv TC_0}{K}\norm{f''}_{L^\infty([-C_0,C_0])} + \frac{31\ctv T\left(1+F\right)^{2}}{\sqrt{N}} \\
        &\quad\leq  \frac{2\ctv T\left(C_0\norm{f''}_{L^\infty([-C_0,C_0])}+18\left(1+F\right)^{2}\right)+1}{\sqrt{N}}.
    \end{split}
\end{align}
This proves \eqref{eqn:expressivity-dnn-scl}. 

\textbf{Step 4. } Lastly, we calculate the depth and the number of non-zero weights and biases of the ReLU neural network $\widehat{\mathcal{U}}:=\widehat{\mathcal{U}}^N$, where we identify the constructed ReLU network with vectorial output $(\widehat{U}^{N}_1(y), \ldots, \widehat{U}^{N}_J(y))$ with the associated piecewise constant function (cf. Definition \ref{def:sol-associated-dnn}). 
%Note that at the cost of one additional hidden layer, one could construct a ReLU neural network that approximates $\widehat{\mathcal{U}}$ in $L^{\infty}(Y; L^1(D))$-sense.
Given that there are $N$ time steps, and that the network $\widehat{f}$ has only one hidden layer, the total network can be seen to have $\mathcal{L}(\widehat{u}_0^\epsilon)+N$ hidden layers. Similarly, the maximum width of the network can be seen to be $J\cdot\max\{2+K,\mathcal{W}(\widehat{u}_0 ^\epsilon)\}\leq 2J\cdot\max\{1+\sqrt{N},\mathcal{W}(\widehat{u}_0^\epsilon)\}$. Finally, the network essentially consists of $J\cdot N$ identical subnetworks, which we denote by $\widehat{\mathcal{L}}$. Each of these subnetworks consists of two copies of $\widehat{f}$ and two copies of a ReLU neural network (of which the depth matches that of $\widehat{f}$) that realizes the identity function. In both cases, two copies correspond to the evaluations in $\widehat{U}^n_{j-1}(y)$ and $\widehat{U}^n_{j+1}(y)$, cf. \eqref{eqn:lxf}. It is also clear that the calculation of $(\widehat{U}^0_j(y))_j$ gives rise to $J$ networks with each $O(d^{\sigma_{\mathcal{M}}}\epsilon^{-\eta_{\mathcal{M}}})$ non-zero weights and biases. Given that $\widehat{f}$ consists of $O(K)$ non-zero weights and biases, the total network has $O\left(J \mathcal{M}(\widehat{u}_0^\epsilon)+JNK\right)$ non-zero weights and biases. Finally, using that $K\leq 2\sqrt{N}$, $(b-a)N=FTJ$ and assumption (A3) concludes the proof. 

%We now constructed a ReLU DNN $Y\to \mathbb{R}^N$, but after only one additional step, one can obtain a ReLU DNN mapping $X\times Y \to \mathbb{R}:(x,y)\mapsto\widehat{\mathcal{U}}^n(x,y)$) that satisfies the same error bound. This can be done through approximating the piecewise constant $\widehat{\mathcal{U}}^n$ by some piecewise affine ReLU DNN $\mathcal{U}$ that is close to $\widehat{\mathcal{U}}^n$ in $L^1$-sense. This then proves the theorem. 
\end{proof}

\begin{remark}
The proof of Theorem \ref{thm:scl-dnn-1d} does not critically depend on the use of the Lax-Friedrichs scheme and the ReLU activation function. In fact, the argument also works with many other activation functions and finite volume schemes for which the convergence rate can explicitly proven. The advantage of the Lax-Friedrichs scheme is that it easily can be written as a ReLU neural network and that the proof of its convergence rate is straightforward. 
\end{remark}

\begin{figure}

\tikzstyle{block} = [rectangle, draw, %fill=blue!20, 
    text centered, rounded corners]
    \tikzstyle{block2} = [rectangle, draw, %fill=blue!20, 
    text centered, rounded corners, minimum height=6em]
        \tikzstyle{block3} = [rectangle, draw, %fill=blue!20, 
    text centered, rounded corners, minimum height=3em, minimum width = 2em]
            \tikzstyle{block4} = [rectangle, draw, %fill=blue!20, 
    text centered, rounded corners, minimum height=9em]
\tikzstyle{line} = [draw, -latex']
\centering
\resizebox{0.8\textwidth}{!}{
\begin{tikzpicture}

\def\start{-1.5}

\node[block] at (-2,\start) (a1) {$y_1$};
\node[block] at (-1,\start) (a2) {$y_2$};
\node at (0,\start) {$\cdots$};
\node[block] at (1,\start) (a3) {$y_{d-1}$};
\node[block] at (2,\start) (a4) {$y_{d}$};

\node[block] at (-4,1) (b1) {$\widehat{U}^0_1(y)$};
\node[block] at (-2.5,1) (b2) {$\widehat{U}^0_2(y)$};
\node[block] at (-1,1) (b3) {$\widehat{U}^0_3(y)$};
\node at (0.25,1) (b7) {$\cdots$};
\node[block] at (1.5,1) (b4) {$\widehat{U}^0_{J-2}(y)$};
\node[block] at (3.25,1) (b5) {$\widehat{U}^0_{J-1}(y)$};
\node[block] at (4.75,1) (b6) {$\widehat{U}^0_{J}(y)$};

\node[block3] at (-2.5,2.5) (c1) {$\widehat{\mathcal{L}}$};
\node[block3] at (-1,2.5) (c2) {$\widehat{\mathcal{L}}$};
\node[block3] at (1.5,2.5) (c3) {$\widehat{\mathcal{L}}$};
\node[block3] at (3.25,2.5) (c4) {$\widehat{\mathcal{L}}$};
\node at (0.25,2.5) (c5) {$\cdots$};

\def\up{4}

\node[block] at (-4,\up) (d1) {$\widehat{U}^1_1(y)$};
\node[block] at (-2.5,\up) (d2) {$\widehat{U}^1_2(y)$};
\node[block] at (-1,\up) (d3) {$\widehat{U}^1_3(y)$};
\node at (0.25,\up) {$\cdots$};
\node[block] at (1.5,\up) (d4) {$\widehat{U}^1_{J-2}(y)$};
\node[block] at (3.25,\up) (d5) {$\widehat{U}^1_{J-1}(y)$};
\node[block] at (4.75,\up) (d6) {$\widehat{U}^1_{J}(y)$};

\def\up{5.25}

\node[block] at (-4,\up)  {$\widehat{U}^N_1(y)$};
\node[block] at (-2.5,\up) {$\widehat{U}^N_2(y)$};
\node[block] at (-1,\up)  {$\widehat{U}^N_3(y)$};
\node at (0.25,\up) {$\cdots$};
\node[block] at (1.5,\up)  {$\widehat{U}^N_{J-2}(y)$};
\node[block] at (3.25,\up)  {$\widehat{U}^N_{J-1}(y)$};
\node[block] at (4.75,\up)  {$\widehat{U}^N_{J}(y)$};

\def\up{4.75}

\node at (-4,\up)  {$\vdots$};
\node at (-2.5,\up) {$\vdots$};
\node at (-1,\up)  {$\vdots$};
\node at (0.25,\up) {$\vdots$};
\node at (1.5,\up)  {$\vdots$};
\node at (3.25,\up)  {$\vdots$};
\node at (4.75,\up)  {$\vdots$};

\path [line] (a1) -- (b1);
\path [line] (a1) -- (b2);
\path [line] (a1) -- (b3);
\path [line] (a1) -- (b4);
\path [line] (a1) -- (b5);
\path [line] (a1) -- (b6);

\path [line] (a2) -- (b1);
\path [line] (a2) -- (b2);
\path [line] (a2) -- (b3);
\path [line] (a2) -- (b4);
\path [line] (a2) -- (b5);
\path [line] (a2) -- (b6);

\path [line] (a3) -- (b1);
\path [line] (a3) -- (b2);
\path [line] (a3) -- (b3);
\path [line] (a3) -- (b4);
\path [line] (a3) -- (b5);
\path [line] (a3) -- (b6);

\path [line] (a4) -- (b1);
\path [line] (a4) -- (b2);
\path [line] (a4) -- (b3);
\path [line] (a4) -- (b4);
\path [line] (a4) -- (b5);
\path [line] (a4) -- (b6);

\path [line] (b1) -- (c1);
\path [line] (b3) -- (c1);

\path [line] (b2) -- (c2);
\path [line] (b7) -- (c2);

\path [line] (b7) -- (c3);
\path [line] (b5) -- (c3);

\path [line] (b4) -- (c4);
\path [line] (b6) -- (c4);

\path [line] (c1) -- (d2);
\path [line] (c2) -- (d3);
\path [line] (c3) -- (d4);
\path [line] (c4) -- (d5);

\def\ypos{1}
\def\xpos{8}

\node[block] at (\xpos,\ypos) (A1) {$\widehat{U}^n_{j-1}(y)$};
\node[block] at (\xpos+2,\ypos) (A2) {$\widehat{U}^n_{j+1}(y)$};

\node[block3] at (\xpos-0.5,\ypos+1.5) (B1) {$id$};
\node[block3] at (\xpos+0.5,\ypos+1.5) (B2) {$\widehat{f}$};
\node[block3] at (\xpos+1.5,\ypos+1.5) (B3) {$\widehat{f}$};
\node[block3] at (\xpos+2.5,\ypos+1.5) (B4) {$id$};

\node[block] at (\xpos+1,\ypos+3) (C) {$\widehat{U}^{n+1}_{j}(y)$};

\path [line] (A1) -- (B1);
\path [line] (A1) -- (B2);
\path [line] (A2) -- (B3);
\path [line] (A2) -- (B4);
\path [line] (B1) -- (C);
\path [line] (B2) -- (C);
\path [line] (B3) -- (C);
\path [line] (B4) -- (C);

\draw[rounded corners] (\xpos-1, \ypos+3.5) rectangle (\xpos+3, \ypos-0.5);

\node at (\xpos+1,\ypos-1)  {$\widehat{\mathcal{L}}$};

    % \path [line] (layer0) -- node{$A$} (layer1);
    % \path [line] (layer1) -- node[near start]{$B$} (layer2a);
    % \path [line] (layer1) --  node[near end]{$C$}(layer2b);
    % \path [line] (layer2a) -- node{$D$} (layer3a);
    % \path [line] (layer3a) -- node{$F$}(layer4a);
    % \path [line] (layer4a) -- node[near end]{$H$}(layer5a);
    % \path [line] (layer5a) -- node[near end]{$J$}(layer6);
    % \path [line] (layer2b) -- node{$E$}(layer3b);
    % \path [line] (layer3b) -- node{$G$}(layer4b);
    % \path [line] (layer4b) -- node{$I$}(layer5b);
    % \path [line] (layer5b) -- node[near start]{$J$}(layer6);
    % \path [line] (layer6) -- node{$K$}(layer7);

\end{tikzpicture}
}
    % \centering
    % \includegraphics[width=\textwidth]{}
\caption{Flowchart of the ReLU neural networks $\widehat{\mathcal{U}}^N$ and $\widehat{\mathcal{L}}$ for a fixed flux. }
\label{fig:flowchart}
\end{figure}

The bounds of Theorem \ref{thm:scl-dnn-1d} are particularly simple in the setting of Example \ref{ex:kle}. The following corollary shows that the connectivity and network width only depend linearly on the parameter space dimension. 

\begin{corollary}[Theorem \ref{thm:scl-dnn-1d} for Karhunen-Loève expansion]\label{cor:scl-dnn-1d-kle}
Let $T>0$, $d\in\mathbb{N}$, $a,b\in\mathbb{R}$ with $a<b$ and $D=[a,b]$. Let $C_0 = \norm{u_0}_{L^{\infty}(D\times Y)}$ and $\ctv=\sup_{y\in Y}\text{TV}(u_0(\cdot,y))$. For $y\in Y=[0,1]^d$, denote by $u(T,\cdot,y)$ the solution at time $t=T$ of \eqref{eqn:original-scl} with a flux $f:[-C_0,C_0]\to\mathbb{R}$ that satisfies assumption (A1) and initial condition $u_0(\cdot,y)$ given by a Karhunen-Loève expansion (cf. Example \ref{ex:kle}). Then for every $N\in\mathbb{N}$ there exists a ReLU neural network such that the associated solution map $\widehat{\mathcal{U}}^N:Y\to\mathbb{R}$ (cf. Defintion \ref{def:sol-associated-dnn}) satisfies the error bound
\begin{equation}\label{eqn:expressivity-dnn-scl-kle}
\sup_{y\in Y}\norm{u(T,\cdot,y)-\widehat{\mathcal{U}}^N(\cdot,y)}_{L^1(D)} \leq
%\epsilon + \frac{2CTC_0}{K}\norm{f''}_{L^\infty([-C_0,C_0])} + \frac{17CT\left(1+\norm{f'}_{L^\infty([-C_0,C_0])}\right)^{2}}{\sqrt{N}}.
    %\sup_{y\in Y}\norm{u(T,\cdot,y)-\widehat{\mathcal{U}}(\cdot,y)}_{L^1(D)} \leq  \frac{2CTC_0}{K}\norm{f''}_{L^\infty([-C_0,C_0])} +\frac{18CT\left(1+\norm{f'}_{L^\infty([-C_0,C_0])}\right)^{2}}{\sqrt{N}}.
    %\frac{2CT\left(C_0\norm{f''}_{L^\infty([-C_0,C_0])}+9\left(1+\norm{f'}_{L^\infty([-C_0,C_0])}\right)^{2}\right)}{\sqrt{N}}.
    \frac{2\ctv T\left(C_0\norm{f''}_\infty+18\left(1+\norm{f'}_\infty\right)^{2}\right)}{\sqrt{N}}.
\end{equation}
In addition, it holds that for $N\in\mathbb{N}$,
\begin{align}
    \begin{split}
        \mathcal{M}(\widehat{\mathcal{U}}^N) &= O\left(dN+N^{5/2}\right), \\
        \mathcal{L}(\widehat{\mathcal{U}}^N) &\leq N+1, \\
        \mathcal{W}(\widehat{\mathcal{U}}^N) &\leq \max\left\{1+d,\frac{2(b-a)N^{3/2}}{T\norm{f'}_\infty}\right\} , \\ 
         \mathcal{B}(\widehat{\mathcal{U}}^N) &\leq \max \{C_{\mathcal{B}},C_0+\abs{f(-C_0)},C_0+\abs{f(C_0)},2 \norm{f'}_\infty,(2\norm{f'}_\infty)^{-1}\}.
    \end{split}
\end{align}
\end{corollary}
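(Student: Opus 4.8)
The plan is to derive the statement as a specialization of Theorem~\ref{thm:scl-dnn-1d} to the Karhunen--Loève setting of Example~\ref{ex:kle}, supplemented by one refinement of the width estimate that exploits the affine structure of the initial data.

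First I would check that the parametric initial data of Example~\ref{ex:kle} satisfies assumptions (A2) and (A3) and read off the relevant constants and exponents. For each fixed $x\in D$ the map $u_0(x,\cdot)=\overline{u}(x)+\sum_{i=1}^d\sqrt{\lambda_i}\,y_i\varphi_i(x)$ is \emph{affine} in $y$, hence is realized \emph{exactly} by a ReLU network with no hidden layers. One may therefore take $\widehat{u}_0^\epsilon=u_0$ with approximation error $0\leq\epsilon$ for every $\epsilon>0$; crucially, the choice $\epsilon=0$ is now admissible. This fixes the (A3) quantities as $\mathcal{M}(\widehat{u}_0^\epsilon(x,\cdot))=d+1$, $\mathcal{L}(\widehat{u}_0^\epsilon(x,\cdot))=1$, $\mathcal{W}(\widehat{u}_0^\epsilon(x,\cdot))=d$, i.e. $\eta_{\mathcal{M}}=\eta_{\mathcal{L}}=\eta_{\mathcal{W}}=0$, $\sigma_{\mathcal{M}}=\sigma_{\mathcal{W}}=1$, $\sigma_{\mathcal{L}}=0$, with the constants $C_{\mathcal{M}},C_{\mathcal{W}},C_{\mathcal{L}}$ all equal to $1$; the weights $\sqrt{\lambda_i}\varphi_i(x)$ and bias $\overline{u}(x)$ are bounded, yielding a finite $C_{\mathcal{B}}$. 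Since no approximant $\widehat{u}_0^\epsilon$ distinct from $u_0$ enters, the constants collapse to $C_0=\norm{u_0}_{L^\infty(D\times Y)}$ and $\ctv=\sup_{y\in Y}\mathrm{TV}(u_0(\cdot,y))$, exactly as asserted.

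With these data in hand, the error bound~\eqref{eqn:expressivity-dnn-scl-kle} follows by rerunning Step~3 of the proof of Theorem~\ref{thm:scl-dnn-1d} with $\epsilon=0$. The additive $+1$ in the numerator of~\eqref{eqn:expressivity-dnn-scl} arises \emph{only} from the choice $\epsilon=1/\sqrt{N}$ used there to control $\norm{u_0-\Tilde{u}_0^\epsilon}_{L^\infty(Y;L^1(D))}$; because the initial data is now reproduced exactly, that term vanishes and the numerator loses its constant. The bounds on $\mathcal{M}$, $\mathcal{L}$ and $\mathcal{B}$ are then immediate substitutions into the formulas of Theorem~\ref{thm:scl-dnn-1d}: with the exponents above, $\mathcal{M}=O(d^{\sigma_{\mathcal{M}}}N^{1+\eta_{\mathcal{M}}/2}+N^{5/2})=O(dN+N^{5/2})$, while $\mathcal{L}\leq\mathcal{L}(\widehat{u}_0^\epsilon)+N=N+1$, and $\mathcal{B}$ is unchanged.

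The one step that is \emph{not} a mechanical substitution, and which I expect to be the main obstacle, is the width bound. The general count in Step~4, $\mathcal{W}=J\cdot\max\{2+K,\mathcal{W}(\widehat{u}_0^\epsilon)\}$, overcounts here: it multiplies the initial-data width by $J$ because in the general case the $J$ cell-average subnetworks carry hidden layers and must be laid out in parallel. In the present setting $\mathcal{L}(\widehat{u}_0^\epsilon)=1$, so the whole map $y\mapsto(\widehat{U}^0_1(y),\dots,\widehat{U}^0_J(y))$ collapses to a \emph{single} affine layer $\mathbb{R}^d\to\mathbb{R}^J$, whose contribution to the maximal width is only $O(d)$ --- concretely the stated $1+d$ --- rather than $Jd$. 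Hence the maximal width is $\max\{1+d,\ \text{width of the Lax--Friedrichs stages}\}$; the latter is of order $J(2+K)\leq 2J(1+\sqrt{N})$, which by the identity $(b-a)N=FTJ$ with $F=\norm{f'}_\infty$ is a constant multiple of $N^{3/2}$, giving the stated term $\tfrac{2(b-a)N^{3/2}}{T\norm{f'}_\infty}$. Carefully justifying this reduction --- in particular that peeling the affine initial layer off the first Lax--Friedrichs stage inflates no intermediate width --- is the crux; everything else is bookkeeping inherited from Theorem~\ref{thm:scl-dnn-1d}.
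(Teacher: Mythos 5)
Your proposal is correct and follows exactly the route the paper intends: the corollary is stated without a separate proof as a direct specialization of Theorem \ref{thm:scl-dnn-1d} to Example \ref{ex:kle}, with $\epsilon=0$ admissible because the affine Karhunen--Lo\`eve data is realized exactly (which removes the $+1$ in the numerator) and with the exponents $\eta_{\mathcal{M}}=\eta_{\mathcal{L}}=\eta_{\mathcal{W}}=0$, $\sigma_{\mathcal{M}}=\sigma_{\mathcal{W}}=1$, $\sigma_{\mathcal{L}}=0$ substituted into the complexity bounds. You also correctly isolate the one step that is not mechanical, namely that the width bound improves from $J\cdot\mathcal{W}(\widehat{u}_0^\epsilon)$ to $\max\{1+d,\,O(N^{3/2})\}$ because the zero-hidden-layer initial-data subnetworks collapse into a single affine layer $\mathbb{R}^d\to\mathbb{R}^J$.
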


\subsection{Parametric flux and initial data}\label{sec:flux}

We expand the results from the previous section to one-dimensional scalar conservation laws with parametric initial condition and a parametric flux, 

\begin{align}\label{eqn:original-scl-flux}
\begin{split}
    \begin{cases}\partial_t u(t,x,y,z) + \partial_x f(u(t,x,y),z) = 0,\\
    u(0,x,y,z) = u_0(x,y),\end{cases}
\end{split}
\end{align}
for all $x\in D=[a,b]$, $y\in Y=[0,1]^d$, $z\in Z=[0,1]^\s$ and $t\in[0,T]$. In this notation, the initial condition is parametrized by the vector $y$ and the flux function by the vector $z$. To prove our approximation result we need to slightly adapt assumptions (A1-3) we made in the previous section. In the current setting we assume: 
\begin{itemize}
    \item[\textit{(B1)}] We replace (A1) by the assumption that the flux is in the form of a truncated Karhunen-Loève expansion (cf. Example \ref{ex:kle}),
\begin{equation*}\label{eqn:kl-flux}
    f(u,z) \approx \overline{f}(u)+\sum_{i=1}^\s \sqrt{\lambda_i}z_i\varphi_i(u),
\end{equation*}
where $\overline{f}\in C^2(\mathbb{R};\mathbb{R})$ and $\varphi_i$ are some smooth basis functions. 
    
    \item[\textit{(B2-3)}] We take over assumptions (A2) and (A3), including the definition of $C_0$ and $\ctv$.
    \item[\textit{(B4)}] We make the assumption that there exist constants $C_f>1$ and $\sigma_f>0$ such that it holds that
    \begin{equation*}
        \norm{\partial_u f(u,z)}_{L^\infty([-C_0,C_0])} \leq C_f \s^{\sigma_f}
    \end{equation*}
    and that
    \begin{equation*}
        \max_i\sqrt{\lambda_i}\left(C_0\norm{\varphi_i''}_\infty+\frac{\norm{\varphi_i}_\infty+1}{4}\norm{\varphi_i'}_\infty\right) \leq C_f \s^{2\sigma_f}, 
    \end{equation*}
    where $\norm{\cdot}_\infty := \norm{\cdot}_{L^\infty([-C_0,C_0])}$. 
\end{itemize}
Assumption (B4) is related to the decay of the Karhunen-Loève eigenvalues. We will see in the numerical experiments that these conditions are generally easily satisfied. 
We can then obtain the following result. 

\begin{theorem}[Extension of Theorem \ref{thm:scl-dnn-1d}  to uncertain flux]\label{thm:scl-dnn-flux-1d}
Let $T>0$, $d,\s\in\mathbb{N}$, $a,b\in\mathbb{R}$ with $a<b$ and $D=[a,b]$. For $y\in Y=[0,1]^d$ and $z\in Z=[0,1]^\s$, denote by $u(T,\cdot,y,z)$ the solution at time $t=T$ of \eqref{eqn:original-scl} with flux $f(\cdot,z)$ as in (B1) and initial condition $u_0(\cdot,y)$ that satisfies assumptions (A2) and (A3). Then for every $N\in\mathbb{N}$ there exists a ReLU neural network $\widehat{\mathcal{U}}^N$ that satisfies the error bound
\begin{equation}\label{eqn:expressivity-flux-dnn-scl}
    \sup_{y\in Y}\norm{u(T,\cdot,y)-\widehat{\mathcal{U}}^N(\cdot,y)}_{L^1(D)} \leq  \frac{2\ctv T\left(C_0\norm{\overline{f}''}_\infty +19\left(1+C_f \s^{\sigma_f}\right)^{2}\right)+1}{\sqrt{N}}.
\end{equation}
Let In addition, it holds that that there exist a constant $\gamma(\sigma_f,\eta_{\mathcal{L}},\eta_{\mathcal{W}},\eta_{\mathcal{M}})\geq 0$ with the property that $\gamma=0$ if $\sigma_f=0$ such that for $N\in\mathbb{N}$ it holds that
\begin{align}
    \begin{split}
        \mathcal{M}(\widehat{\mathcal{U}}^N) &= O \left( d^{\sigma_{\mathcal{M}}}\s^{\gamma}N^{\eta_{\mathcal{M}}/2}+s^{2+\gamma}N^{5/2}\right), \\
        %\mathcal{L}(\widehat{\mathcal{U}}^N) &=O\left( d^{\sigma_{\mathcal{L}}}\s^{1+2\gamma}N^{3/2+\eta_{\mathcal{L}}/2}\right), \\
        \mathcal{L}(\widehat{\mathcal{U}}^N) &=O\left( d^{\sigma_{\mathcal{L}}}\s^{\gamma}N^{\eta_{\mathcal{L}}/2}+s^\gamma N\ln(sN)\right), \\
        \mathcal{W}(\widehat{\mathcal{U}}^N) &=O\left( d^{\sigma_{\mathcal{W}}}\s^{\gamma}N^{\eta_{\mathcal{W}}/2}+s^{2+\gamma}N^{3/2}\right), \\ 
         \mathcal{B}(\widehat{\mathcal{U}}^N) &=O(1). 
    \end{split}
\end{align}
\end{theorem}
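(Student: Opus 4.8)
The plan is to retrace the four-step proof of Theorem \ref{thm:scl-dnn-1d} essentially verbatim, changing only the places where the dependence of the flux on the parameter $z$ enters. The single genuinely new object is a ReLU network $\widehat{f}(u,z)$ that approximates the parametric flux $f(\cdot,z)$ in the Lipschitz seminorm in $u$, \emph{uniformly} in $z\in Z$, so that the continuous-dependence estimate \cite[Thm.~4.3]{holden2015front} invoked in Step~1 applies unchanged with the parametric flux frozen at each fixed $z$.

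First I would construct $\widehat{f}$ directly from the Karhunen--Lo\`eve representation in (B1). Lemma \ref{lem:flux-approximation}, applied on $[-C_0,C_0]$, yields networks $\widehat{\overline{f}}$ and $\widehat{\varphi}_i$ with Lipschitz errors $O(C_0\norm{\overline{f}''}_\infty/K)$ and $O(C_0\norm{\varphi_i''}_\infty/K)$, and the products $z_i\varphi_i(u)$ I would realize by feeding $(\widehat{\varphi}_i(u),z_i)$ into the multiplication network $\widehat{\times}_m$ of Lemma \ref{lem:mult-lipschitz}, setting $\widehat{f}(u,z)=\widehat{\overline{f}}(u)+\sum_{i=1}^{\s}\sqrt{\lambda_i}\,\widehat{\times}_m(\widehat{\varphi}_i(u),z_i)$. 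Splitting the error of the $i$-th summand as $z_i(\varphi_i-\widehat{\varphi}_i)$ plus the composition of $\widehat{\varphi}_i$ with the map $v\mapsto z_iv-\widehat{\times}_m(v,z_i)$, and using $z_i\in[0,1]$ together with the chain rule for the Lipschitz seminorm of a composition, the $i$-th contribution is controlled by $\sqrt{\lambda_i}$ times the sum of $\norm{\varphi_i-\widehat{\varphi}_i}_{\mathrm{Lip}}$ and $2^{-(m+1)}(\norm{\varphi_i}_\infty+1)\norm{\widehat{\varphi}_i}_{\mathrm{Lip}}$. This is exactly the combination kept bounded by the second inequality of (B4). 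Since summing over the $\s$ modes introduces a factor $\s$, I would take $K$ polynomially large in $\s$ and $m=O(\ln(\s N))$ so that this factor is compensated and the total flux Lipschitz error is $O((1+C_f\s^{\sigma_f})^2/\sqrt{N})$; multiplied by $\ctv T$ in the stability estimate, it is absorbed into the term $19(1+C_f\s^{\sigma_f})^2$ of \eqref{eqn:expressivity-flux-dnn-scl}.

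With $\widehat{f}$ in hand, Steps~1--3 go through as before. The modified law \eqref{eqn:modified-scl} now carries $\widehat{f}(\cdot,z)$, the maximum principle again confines its solution to $[-C_0,C_0]$, and the continuous-dependence estimate bounds $\norm{u-\widehat{u}^\epsilon}$ by the initial-data error (treated exactly as in Theorem \ref{thm:scl-dnn-1d} via (B2)--(B3) and Koksma--Hlawka) plus $\ctv T$ times the flux error above. In the Lax--Friedrichs emulation I would read off the wave speed $F=\norm{\partial_uf}_\infty\leq C_f\s^{\sigma_f}$ from the first inequality of (B4), fix the number of spatial cells $J$ by the CFL condition in terms of $N$ and $F$, and apply Kuznetsov's estimate (Lemma \ref{lem:LxF-rate}) to obtain the scheme error $O((1+F)^2/\sqrt{N})$, cf.\ \eqref{eqn:step2-result}; this is the dominant $\s$-dependent contribution and produces the factor $(1+C_f\s^{\sigma_f})^2$. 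Choosing $\epsilon=1/\sqrt{N}$ and summing the three error contributions then yields \eqref{eqn:expressivity-flux-dnn-scl}.

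The part demanding the most care, and the main obstacle, is the size count. Each per-timestep block must now embed $\widehat{f}$, whose depth is governed by the multiplication subnetworks and is $O(m)=O(\ln(\s N))$; this is the origin of the $\ln(\s N)$ factor in $\mathcal{L}(\widehat{\mathcal{U}}^N)$, and the parameters $z_1,\dots,z_{\s}$ must additionally be routed forward through identity channels to every time level since $\widehat{f}(\cdot,z)$ is reused at each step. Propagating $J$, $N$, the polynomial-in-$\s$ width of the $\s$-mode flux network, and $m=O(\ln(\s N))$ through the $J\cdot N$ cell updates---each invoking two copies of $\widehat{f}$---gives the stated bounds, where the exponent $\gamma$ merely collects the \emph{additional} powers of $\s$ generated by the growth $F=C_f\s^{\sigma_f}$ of the wave speed (together with the initial-data exponents $\eta_{\mathcal{Q}}$), so that $\gamma=0$ precisely when $\sigma_f=0$; the baseline powers such as $\s^{2}$ and the factor $\ln(\s N)$ that survive in that case reflect the $\s$ modes of the flux and the multiplication depth. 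Finally, because Lemmas \ref{lem:flux-approximation} and \ref{lem:mult-lipschitz} guarantee weights bounded uniformly in the accuracy, the identity channels carry unit weights, and all quantities entering $\mathcal{B}$ are $O(1)$, one concludes $\mathcal{B}(\widehat{\mathcal{U}}^N)=O(1)$, completing the proof.
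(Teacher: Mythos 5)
Your proposal follows essentially the same route as the paper's proof: you build $\widehat{f}$ from the truncated Karhunen--Lo\`eve expansion by combining Lemma \ref{lem:flux-approximation} (for $\overline{f}$ and the $\varphi_i$) with the Lipschitz multiplication network of Lemma \ref{lem:mult-lipschitz}, absorb the factor of $\s$ from summing the modes by enlarging the interpolation parameter and taking $m=O(\ln(\s N))$, invoke (B4) and the reparametrization in $N$ exactly as the paper does, and then carry Steps 1--3 of Theorem \ref{thm:scl-dnn-1d} over mutatis mutandis, including the routing of $z$ through identity channels in the size count. The argument and all key choices (error decomposition, role of $F\leq C_f\s^{\sigma_f}$ in Kuznetsov's estimate, origin of the $\ln(\s N)$ depth and of $\gamma$) match the paper's proof.
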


\begin{proof}
The proof is essentially the same as that of Theorem \ref{thm:scl-dnn-1d}, only the approximation of $f$ requires more attention, as well as determining the size of the network.

\textbf{Step 1. } We start by proving the equivalent of \eqref{eqn:flux-approximation} in the proof Theorem \ref{thm:scl-dnn-1d}. Using the triangle inequality, it suffices to treat each term in the flux of (B1) separately. We approximate $\overline{f}$ by the network from Lemma \ref{lem:flux-approximation} using parameter $J\leftarrow K_1 = \lceil\sqrt{N^*}\rceil$, where we denote by $N^*$ the number of time steps. For every $i$, we approximate $\varphi_i$ by the network from Lemma \ref{lem:flux-approximation} using parameter $J\leftarrow K_2 = sK_1$. Finally, we approximate the multiplication operators using Lemma \ref{lem:mult-lipschitz} with parameters $m\leftarrow \log_2(s\sqrt{N^*})$, $M\leftarrow 1$ and $N\leftarrow \norm{\varphi_i}_\infty$. This then gives us the existence of a ReLU neural network $\widehat{f}$ that satisfies the error bound
\begin{align}\label{eqn:random-flux-approximation}
    \begin{split}
& \sup_{z\in Z}\norm{f(\cdot,z)-\widehat{f}(\cdot,z)}_{\text{Lip}([-C_0,C_0];\mathbb{R})}\\
&\quad \leq \norm{\overline{f}-\widehat{\overline{f}}}_{\text{Lip}([-C_0,C_0];\mathbb{R})}+\sum_{i=1}^s \sqrt{\lambda_i}\norm{\varphi_i-\widehat{\varphi}_i}_{\text{Lip}([-C_0,C_0];\mathbb{R})}\\
&\qquad + \sum_{i=1}^s \sqrt{\lambda_i} \sup_{z\in Z}\norm{\times(z_i,\cdot)-\widehat{\times}_m(z_i,\cdot)}_{\text{Lip}([-\norm{\varphi_i}_\infty,\norm{\varphi_i}_\infty];\mathbb{R})}\norm{\varphi_i}_{\text{Lip}([-C_0,C_0];\mathbb{R})}\\
&\quad \leq \frac{2C_0\norm{\overline{f} ''}_\infty}{K_1} + s \max_i\sqrt{\lambda_i}\left(\frac{2C_0\norm{\varphi_i''}_\infty}{K_2}+\frac{(\norm{\varphi_i}_\infty+1)\norm{\varphi_i'}_\infty}{2^{m+1}}\right)\\
&\quad \leq \frac{4C_0\norm{\overline{f}''}_\infty + \max_i\sqrt{\lambda_i}\left(4C_0\norm{\varphi_i''}_\infty+(\norm{\varphi_i}_\infty+1)\norm{\varphi_i'}_\infty\right)}{2\sqrt{N^*}} \\
&\quad \leq \frac{2C_0 \norm{\overline{f}''}_\infty + 2C_f \s^{2\sigma_f}}{\sqrt{N^*}} 
%\leq \frac{2C_0 \norm{\overline{f}''}_\infty + (1+C_f \s^{\sigma_f})^2}{\sqrt{N}} 
\leq \frac{2C_0 \norm{\overline{f}''}_\infty + 2(1+C_f)^2}{\sqrt{N}},
    \end{split}
\end{align}
where we used assumption (B4) and where we defined $N=s^{-4\sigma_f}N^*$ in order to make the error bound independent of $\s$. The existence of the network $\widehat{\mathcal{U}}^{N^*}$ and error bound \eqref{eqn:expressivity-flux-dnn-scl} then follow mutatis mutandis from the steps described in the proof of Theorem \ref{thm:scl-dnn-1d}. 

\textbf{Step 2. } We now determine the size of $\widehat{\mathcal{U}}^{N^*}$, cf. step 3 of the proof of Theorem \ref{thm:scl-dnn-1d}. A visualization can be found in Figure \ref{fig:flowchart-2}. Using the notation from the proof of Theorem \ref{thm:scl-dnn-1d}, the calculation of $(\widehat{U}^0_j(y))_j$ gives rise to $J^*$ parallel subnetworks of depth $C_{\mathcal{L}}d^{\sigma_{\mathcal{L}}}(N^*)^{\eta_{\mathcal{L}}/2}$, width $C_{\mathcal{W}}d^{\sigma_{\mathcal{W}}}(N^*)^{\eta_{\mathcal{W}}/2}$ and $C_{\mathcal{M}}d^{\sigma_{\mathcal{M}}}(N^*)^{\eta_{\mathcal{M}}/2}$ non-zero weights and biases. The second part of the network consists of $J^*\cdot N^*$ copies of the network $\widehat{f}$. The following holds: 
\begin{align}
    \begin{split}
        \mathcal{M}(\widehat{f}) &= \mathcal{M}\left(\widehat{\overline{f}}\right) + \s(\mathcal{M}\left(\widehat{\varphi}\right)+\mathcal{M}\left(\widehat{\times}\right))  = O\left(\sqrt{N^*}+\s^2\sqrt{N^*}+\s\log_2(\s\sqrt{N^*})\right) = O(\s^2\sqrt{N^*}), \\
        \mathcal{L}(\widehat{f}) &= \max\left\{\mathcal{L}\left(\widehat{\overline{f}}\right),\mathcal{L}\left(\widehat{\varphi}\right)+\mathcal{L}\left(\widehat{\times}\right)\right\} = O(\log_2(\s\sqrt{N^*})),\\
        \mathcal{W}(\widehat{f}) &= \mathcal{W}\left(\widehat{\overline{f}}\right) + s(\mathcal{W}\left(\widehat{\varphi}\right)+\mathcal{W}\left(\widehat{\times}\right)) = O(\s^2\sqrt{N^*}).
    \end{split}
\end{align}
Concatenating the two aforementioned parts of the network gives therefore rise to $L:=N^*\cdot\mathcal{L}(\widehat{f})$ layers. Finally, a third part of the network makes the value of the $\s$-dimensional vector $z$ accessible to all layers of the other two parts of the network. This can be done using a subnetwork of width $\s$, depth $L$ and connectivity $O(\s L)$. Adding the contributions from all three subnetworks, one obtains the claimed complexities of the theorem. To make the results presentable, we only report the complexities in the theorem. Note however that exact bounds can very easily be obtained from our calculations. 
\end{proof}

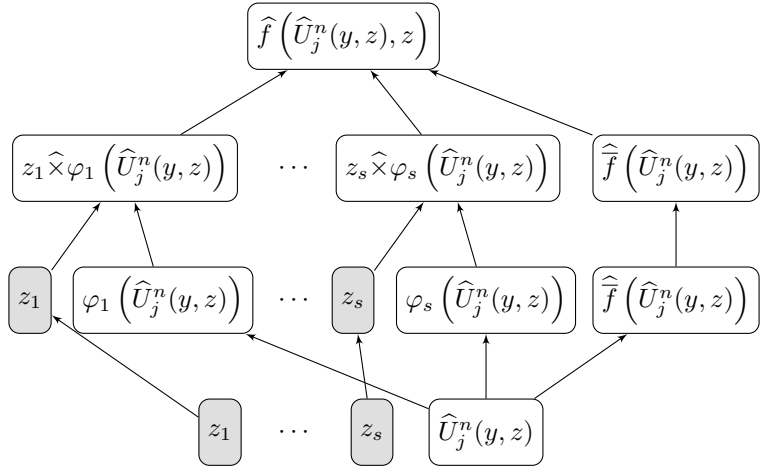
\begin{figure}
\tikzstyle{block} = [rectangle, draw, minimum height=2.5em, 
    text centered, rounded corners]
\tikzstyle{block2} = [rectangle, draw, minimum height=2.5em, 
    text centered, rounded corners,fill={rgb:black,1;white,7}]

\tikzstyle{line} = [draw, -latex']
\centering

\begin{tikzpicture}

\def\start{-0.75}

\node[block2] at (-0.5+2,\start) (a1) {$z_1$};
\node[block2] at (1.5+2,\start) (a2) {$z_\s$};
\node at (0.5+2,\start) {$\cdots$};
\node[block] at (3+2,\start) (a3) {$\widehat{U}^n_j(y,z)$};

\node[block2] at (-1,1) (b1) {$z_1$};
\node[block] at (0.75,1) (b2) {$\varphi_1\left(\widehat{U}^n_j(y,z)\right)$};
\node at (2.5,1) (b7) {$\cdots$};
\node[block2] at (3.25,1) (b4) {$z_\s$};
\node[block] at (5,1) (b5) {$\varphi_\s\left(\widehat{U}^n_j(y,z)\right)$};
\node[block] at (7.5,1) (b6) {$\widehat{\overline{f}}\left(\widehat{U}^n_j(y,z)\right)$};

\node[block] at (+0.25,2-\start) (c1) {$z_1 \widehat{\times} \varphi_1\left(\widehat{U}^n_j(y,z)\right)$};
\node at (2.5,2-\start)  {$\cdots$};
\node[block] at (4.5,2-\start) (c2) {$z_\s \widehat{\times} \varphi_\s\left(\widehat{U}^n_j(y,z)\right)$};
\node[block] at (7.5,2-\start) (c3) {$\widehat{\overline{f}}\left(\widehat{U}^n_j(y,z)\right)$};

\node[block] at (3.125,3-2*\start) (d) {$\widehat{f}\left(\widehat{U}^n_j(y,z),z\right)$};

\path [line] (a1) -- (b1);
\path [line] (a2) -- (b4);
\path [line] (a3) -- (b2);
\path [line] (a3) -- (b5);
\path [line] (a3) -- (b6);

\path [line] (b1) -- (c1);
\path [line] (b2) -- (c1);
\path [line] (b4) -- (c2);
\path [line] (b5) -- (c2);
\path [line] (b6) -- (c3);

\path [line] (c1) -- (d);
\path [line] (c2) -- (d);
\path [line] (c3) -- (d);

\end{tikzpicture}
    \caption{Flowchart of the ReLU neural network $\widehat{f}$ for uncertain flux functions. Gray neurons are shared over all $j$. }
    \label{fig:flowchart-2}
\end{figure}

\subsection{Extensions}
In what follows, we present a number of ways in which Theorem \ref{thm:scl-dnn-1d} and Theorem \ref{thm:scl-dnn-flux-1d} can be extended. For simplicity, we will only prove these results for a fixed flux function. The extension to a parametric flux function is completely analogous to the proof of Theorem \ref{thm:scl-dnn-flux-1d}.
Moreover, all results can also be generalized to systems of conservation laws, as long as it can be proven that some finite volume scheme, such as the Lax-Friedrichs scheme, converges to the true solution at a known convergence rate. We consider extensions to different neural network architectures (Section \ref{sec:rnn}), different kinds of approximation (observables in Section \ref{sec:obs} and space-time in Section \ref{sec:space-time}) as well as multi-dimensional scalar conservation laws (Section \ref{sec:multi-d}). 

\subsubsection{Residual and recurrent neural networks}\label{sec:rnn}

The network that has been constructed in the proof of Theorem \ref{thm:scl-dnn-1d} is very sparse and consists of subnetworks that are many times repeated. One possibility to reduce the size of the network is the introduction of skip connections, i.e. connections between non-consecutive hidden layers. These kind of networks are referred to as residual neural networks. Skip connections can most notably be used to simplify the neural network representing \eqref{eqn:lxf} (see also $\mathcal{L}$ in Figure \ref{fig:flowchart}). Note that there is also a strong connection between the structure of the constructed network and that of a recurrent neural network: the network to calculate $ (\widehat{U}^{n+1}_j(y))_j$ from $ (\widehat{U}^{n}_j(y))_j$ is independent from $n$. Casting a part of the constructed network as a recurrent network is thus another way to drastically reduce the size of the network. 

\subsubsection{Observables}\label{sec:obs}

In some applications one is not necessarily interested in the full solution of the conservation law, but rather in the image of the solution under some functional or so-called \textit{observable} or \textit{quantity of interest}. These can be expressed in the generic form 
\begin{equation}
    L_T(y,u) = \int_{D} \phi(x) g(u(T,x,y))dx, 
\end{equation}
where $\phi:D\to \mathbb{R}$ and $g:\mathbb{R}\to\mathbb{R}$ are suitable test functions. This allows us to define the \textit{parameters-to-observable} map
\begin{equation}
    \mathcal{L}_T(y):Y\to\mathbb{R}: y\mapsto L_T(y,u). 
\end{equation}
Provided that $\phi$ and $g$ are sufficiently regular, it follows from Theorem \ref{thm:scl-dnn-1d} that $\mathcal{L}_T$ can also be approximated using DNNs of which the size only depends linearly on the input dimension $d$.

\subsubsection{Space-time approximations}\label{sec:space-time}

Theorem \ref{thm:scl-dnn-1d} approximates the solution of \eqref{eqn:original-scl} on a grid at a fixed time $T>0$. It is natural to ask the question whether this result can be extended to space-time $D\times [0,T]$. The following corollary shows that this is indeed possible, if one allows the network weights to grow with increasing $N$. 

\begin{corollary}[Space-time extension of Theorem \ref{thm:scl-dnn-1d}]\label{cor:scl-dnn-1d-spacetime}Let $T>0$, $d\in\mathbb{N}$, $a,b\in\mathbb{R}$ with $a<b$ and $D=[a,b]$. For $y\in Y=[0,1]^d$, denote by $u(t,\cdot,y)$ the solution at time $t$ of \eqref{eqn:original-scl} with a flux $f:[-C_0,C_0]\to\mathbb{R}$ that satisfies assumption (A1) and initial condition $u_0(\cdot,y)$ that satisfies assumptions (A2) and (A3). Then for every $N\in\mathbb{N}$ there exists a ReLU neural network $\widehat{U}^N:[0,T]\times D \times Y\to \mathbb{R}$ that satisfies the error bound
\begin{equation}\label{eqn:expressivity-dnn-scl-spacetime}
    %\sup_{y\in Y}\norm{u(T,\cdot,y)-\widehat{\mathcal{U}}(\cdot,y)}_{L^1(D)} \leq \epsilon + \frac{2CTC_0}{K}\norm{f''}_{L^\infty([-C_0,C_0])} + \frac{17CT\left(1+\norm{f'}_{L^\infty([-C_0,C_0])}\right)^{2}}{\sqrt{N}}.
    \sup_{y\in Y}\norm{u(t,\cdot,y)-\widehat{U}^N(t,\cdot,y)}_{L^1(D)} \leq  \frac{2\ctv T\left(C_0\norm{f''}_\infty+21\left(1+\norm{f'}_\infty\right)^{2}\right)+1}{\sqrt{N}}
\end{equation}
for all $t\in[0,T]$. In addition, it holds that that for $N\in\mathbb{N}$,
\begin{align}
    \begin{split}
        \mathcal{M}(\widehat{U}^N) &= O \left( d^{\sigma_{\mathcal{M}}}N^{1+\eta_{\mathcal{M}}/2}+N^{5/2}\right), \\
        \mathcal{L}(\widehat{U}^N) &=O( d^{\sigma_{\mathcal{L}}}N^{\eta_{\mathcal{L}}/2}+N), \\
        \mathcal{W}(\widehat{U}^N), &=O( d^{\sigma_{\mathcal{W}}}N^{1+\eta_{\mathcal{W}}/2}+N^{3/2}), \\ 
         \mathcal{B}(\widehat{U}^N) & = O(N). 
    \end{split}
\end{align}
\end{corollary}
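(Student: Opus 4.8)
The plan is to recycle the entire Lax--Friedrichs construction from the proof of Theorem \ref{thm:scl-dnn-1d}, but now to retain \emph{all} intermediate time levels $\widehat{U}^n_j(y)$, $0\le n\le N$, instead of only the final one, and to reconstruct from this space-time grid function a genuine ReLU network of the continuous inputs $(t,x,y)$. Most of the analytic work is already available: Kuznetsov's estimate (Lemma \ref{lem:LxF-rate}) bounds $\|\widehat{u}^\epsilon(t^n,\cdot,y)-\widehat{U}^n(\cdot,y)\|_{L^1(D)}$ by $O(1/\sqrt{N})$ \emph{uniformly in} $n$, and the flux- and initial-data-replacement arguments of Step~1 of Theorem \ref{thm:scl-dnn-1d} carry over verbatim.

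First I would bound the error at a general $t\in[t^n,t^{n+1}]$, which is the genuinely new analytic point since the norm in \eqref{eqn:expressivity-dnn-scl-spacetime} is a supremum over \emph{all} $t\in[0,T]$. The extra ingredient is the $L^1$-Lipschitz continuity in time of entropy solutions, $\|u(t,\cdot,y)-u(t^n,\cdot,y)\|_{L^1(D)}\le \ctv\,\norm{f'}_\infty\,\abs{t-t^n}\le \ctv\,\norm{f'}_\infty\,\Delta t$, which together with the uniform Kuznetsov bound controls the distance from $u(t)$ to each of the two neighbouring grid levels $\widehat{U}^n,\widehat{U}^{n+1}$. I would then define $\widehat{U}^N(t,\cdot,y)$ as the \emph{piecewise-linear interpolant in time} of the grid levels, so that for every $t$ it is a convex combination of two $O(1/\sqrt{N})$-accurate approximations and hence has $L^1(D)$-error $O(1/\sqrt{N})$ \emph{uniformly in} $t$; the convexity (partition-of-unity hat weights $\Lambda_n(t)\ge 0$ with $\sum_n\Lambda_n(t)=1$) is exactly what makes the bound robust at the grid points. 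The spatial dependence is reconstructed by a piecewise-linear interpolation of the cell values on $(x_{j-1/2})_j$, whose additional $L^1(D)$-error is only $O(\Delta x\,\ctv)=O(1/N)$, again because it is a convex combination of neighbouring cell values. These two interpolation errors, together with the error of the approximate multiplications introduced below, account for the enlarged constant $21$ in \eqref{eqn:expressivity-dnn-scl-spacetime} compared with the constant $18$ in \eqref{eqn:expressivity-dnn-scl}.

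The construction then realises this space-time interpolant as one network. I would run the Lax--Friedrichs recursion exactly as before, additionally carrying the input $t$ and an accumulator $(\mathrm{Acc}_j)_j$ forward through all layers by identity channels. At the $n$-th time-stepping layer, where $\widehat{U}^n_j(y)$ is available, I would compute the scalar hat weight $\Lambda_n(t)\in[0,1]$ (piecewise linear in $t$ with slope $1/\Delta t=O(N)$, hence weight magnitude $O(N)$) and form $\Lambda_n(t)\,\widehat{U}^n_j(y)$ through the approximate multiplication network $\widehat{\times}_m$ of Lemma \ref{lem:mult-lipschitz} with $m=O(\log N)$, adding the product into $\mathrm{Acc}_j$; since both factors are bounded uniformly in $N$, these multiplication blocks have bounded weights. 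After all $N$ steps the accumulator holds $\sum_n\Lambda_n(t)\,\widehat{U}^n_j(y)$, the time interpolant. A final, once-only block reconstructs the spatial dependence, using spatial hat functions $\Lambda^{\mathrm{sp}}_j(x)$ (again slope $1/\Delta x=O(N)$) and $J=O(N)$ parallel copies of $\widehat{\times}_m$ to output $\sum_j\mathrm{Acc}_j\,\Lambda^{\mathrm{sp}}_j(x)$. The interpolation slopes yield $\mathcal{B}(\widehat{U}^N)=O(N)$, while the multiplications contribute only bounded weights.

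The step I expect to be the main obstacle is keeping the depth at $O(N)$ rather than $O(N\log N)$: stacking a depth-$O(\log N)$ multiplication \emph{serially} at each of the $N$ time steps would violate $\mathcal{L}(\widehat{U}^N)=O(N)$. The remedy is to \emph{pipeline} the multiplications, so that the multiplication launched at layer $n$ runs in parallel with the subsequent time steps and deposits its staggered contribution into the carried accumulator $O(\log N)$ layers later; the total depth is then $N+O(\log N)=O(N)$, at the cost of only $O(\log N)$ simultaneously active width-$8$ multiplication blocks per cell, i.e. an extra $O(N\log N)=O(N^{3/2})$ in width and $O(N^{5/2})$ in connectivity, all within the stated budgets inherited from Theorem \ref{thm:scl-dnn-1d}. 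Verifying that the pipelined accumulator indeed evaluates to the intended convex interpolant for every $t$, and that the combined width, depth and connectivity match the claimed orders, is the remaining bookkeeping.
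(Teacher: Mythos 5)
Your overall strategy coincides with the paper's: keep all Lax--Friedrichs time levels $\widehat{U}^n_j(y)$, build a space-time interpolant, control the new error terms through the $L^1$-in-time Lipschitz bound for the scheme (\eqref{eqn:lxf-lipschitz-time}) and the TVD property, and observe that only the weight magnitude grows (to $O(N)$) because of the interpolation slopes $1/\Delta t, 1/\Delta x = O(N)$. Where you genuinely diverge is in how the interpolant is realized as a ReLU network. You use a partition-of-unity form $\sum_n \Lambda_n(t)\,\widehat{U}^n_j(y)$ with Yarotsky's approximate multiplication $\widehat{\times}_m$, $m=O(\log N)$, which forces you to invent a pipelining scheme to keep the depth at $N+O(\log N)$ rather than $O(N\log N)$; this is workable and stays within the stated width/connectivity budgets, but it is the delicate part of your argument and it also means your ``convex combination'' is only approximate (each active product carries an $O(2^{-m})$ error that must be folded into the constant). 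The paper instead writes the interpolant in telescoped form, $\widehat{U}^0_j + \sum_{n}\alpha^n(t)\star(\widehat{U}^n_j(y)-\widehat{U}^{n-1}_j(y))$ with ramp functions $\alpha^n$, where $\star$ is the fixed two-neuron, one-hidden-layer pseudo-multiplication of Definition \ref{def:star}: by Lemma \ref{lem:star}, $y_1+x\star(y_2-y_1)$ is an \emph{exact} convex combination of $y_1$ and $y_2$ for every $x\in[0,1]$ and agrees with true multiplication at $x\in\{0,1\}$. This buys an $O(1)$ depth overhead per time step with no pipelining and no multiplication error in the accuracy analysis, at the price of only being usable because a convex combination (rather than an exact product) suffices here. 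Your route is correct but strictly more complicated; if you pursue it, the pipelined accumulator and the accounting of the $O(2^{-m})$ multiplication errors in the constant $21$ are the points that need to be written out carefully rather than left as bookkeeping.
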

\begin{proof}
In the notation of the proof of Theorem \ref{thm:scl-dnn-1d}, recall that $\widehat{U}^n_j(y)$ was the approximation we obtained for $u(t^n,x_j,y)$. We thus have access to an approximation to $u(\cdot,\cdot,y)$ on a grid in space-time. Our goal is to approximate $u(t,x,y)$ by some $\widehat{U}(t,x,y)$ for all $(t,x)$ based on $\widehat{U}^n_j(y)$. First, we define
\begin{equation}
    \alpha^n(t) = \max\left\{\min\left\{\frac{t^n-t}{\Delta t},1\right\},0\right\}\quad \text{and}\quad \beta_j(x) = \max\left\{\min\left\{\frac{x_j-x}{\Delta x},1\right\},0\right\}. 
\end{equation}
It holds that $\alpha^n(t) = 1$ if $t\leq t^{n-1}$ and $\alpha^n(t) = 0$ if $t\geq t^{n}$ and similarly for $\beta^j(x)$. In addition, both quantities can be written as a ReLU network, e.g. $\alpha^n(t) = \sigma(1-\sigma(1-(t^n-t)/\Delta t))$, where $\sigma$ is the ReLU activation function. Then note that a good approximation of $u(t,x_{j+1/2},y)$ would be given by 
\begin{equation}\label{eqn:approx-without-star}
    \widehat{U}^0_j + \sum_{n=1}^N \alpha^n(t) \cdot (\widehat{U}^n_j(y)-\widehat{U}^{n-1}_j(y)).
\end{equation}
As the multiplication operator can not be exactly written as a ReLU neural network, an approximation is necessary. Appendix \ref{sec:star-operation} introduces a ReLU neural network, which we denote by $\star$, consisting of one hidden layer with two neurons that mimics the multiplication operator adequately in our setting. We proceed by replacing the multiplication $\cdot$ by $\star$ in \eqref{eqn:approx-without-star} and define
\begin{equation}\label{eqn:time-dnn}
    \widehat{U}_j(t,y) = \widehat{U}^0_j + \sum_{n=1}^N \alpha^n(t) \star (\widehat{U}^n_j(y)-\widehat{U}^{n-1}_j(y))
\end{equation}
and similarly
\begin{equation}
    \widehat{U}(t,x,y) = \widehat{U}(t,x_1,y) + \sum_{j=1}^J \beta_j(x) \star (\widehat{U}_j(t,y)-\widehat{U}_{j-1}(t,y))
\end{equation}
Now we calculate that for $t^{n-1}\leq t < t^n$,
\begin{equation}
    \Delta x \sum_j\abs{\widehat{U}_j(t,y)-\widehat{U}^{n-1}_j(y)} \leq \Delta x \sum_j\abs{\widehat{U}_j^n(y)-\widehat{U}^{n-1}_j(y)} \leq 2F \text{TV}(\widehat{u}_0^\epsilon )\Delta t
\end{equation}
by \eqref{eqn:lxf-lipschitz-time} in Appendix \ref{app:kuznetsov}, where $F=\norm{f'}_{L^\infty([-C_0,C_0])}$. In addition, it holds for $x_{j-1}\leq x < x_j$ and $t^{n-1}\leq t < t^n$ that
\begin{align}
\begin{split}
    \abs{\widehat{U}(t,x,y)-\widehat{U}_{j}(t,y)} &\leq \abs{\widehat{U}_{j-1}(t,y)-\widehat{U}_{j}(t,y)}\\
    &  \leq \abs{\widehat{U}_{j-1}(t,y)-\widehat{U}^{n-1}_{j-1}(y)}+\abs{\widehat{U}^{n-1}_{j-1}(y)-\widehat{U}^{n-1}_{j}(y)}+\abs{\widehat{U}^{n-1}_{j}(y)-\widehat{U}_{j}(t,y)}.
\end{split}
\end{align}
Since the Lax-Friedrichs method is a monotone, consistent and conservative scheme it satisfies Harten's lemma and is therefore total variation diminishing (TVD), meaning that
\begin{equation}
\begin{split}
    \sum_j \abs{\widehat{U}^{n-1}_{j-1}(y)-\widehat{U}^{n-1}_{j}(y)} &\leq \sum_j \abs{\widehat{U}^{0}_{j-1}(y)-\widehat{U}^{0}_{j}(y)}\\ &= \sum_j \abs{\widehat{u}_0^\epsilon(x_{j-1},y)-\widehat{u}_0^\epsilon(x_{j},y)} \leq \text{TV}(\widehat{u}_0^\epsilon(\cdot,y))
\end{split}
\end{equation}
This allows us to quantify the accuracy of our ReLU neural network approximation with respect to the cell-averages from the proof of Theorem \ref{thm:scl-dnn-1d}, where we use the CFL condition $\Delta x=F\Delta t$,
\begin{align}
    \begin{split}
        \norm{\widehat{U}(t,\cdot,y)-\widehat{\mathcal{U}}^n(\cdot,y)}_{L^1(D)} &\leq 4F \text{TV}(\widehat{u}_0^\epsilon(\cdot,y) )\Delta t + \text{TV}(\widehat{u}_0^\epsilon(\cdot,y) )\Delta x \\&= 5F \text{TV}(\widehat{u}_0^\epsilon(\cdot,y) )\Delta t \\ &\leq 6(1+F)^2 \text{TV}(\widehat{u}_0^\epsilon(\cdot,y) ) T N^{-1/2}.
    \end{split}
\end{align}
This error bound, together with Theorem \ref{thm:scl-dnn-1d}, proves  \eqref{eqn:expressivity-dnn-scl-spacetime}. 

% We now quantify how the size of the constructed network compares to that of Theorem \ref{thm:scl-dnn-1d}. First, we need to calculate $\alpha^n(t)$ for $1\leq n \leq N$, for which $2N$ are needed in total. As we need to have access to $\alpha^n(t)$ at time step $n$, we choose to spread the required neurons equally over all layers, i.e. two additional neurons per layer. In addition, we need to have access to $t$ and $x$ at each layer, leading to another two additional neurons per layer. For each layer and for each $j$ we need two neurons for $\star$-network and one neuron for storing a truncated version of the sum in \eqref{eqn:time-dnn}. Ergo, the maximal width increases with $4+3J$. 

% Next, we need to calculate how many additional hidden layers are required. For the calculation of all $\beta^j(x)$ for $1\leq j\leq J$ we require $2J$ neurons, which we spread over two hidden layers at the end of the network. The calculation of $\widehat{U}(t,x,y)$ requires another additional layer because of the $\star$-operation. In summary, three additional layers are required. One can finally also check that the complexity of the connectivity does not change. 

Finally, we note that the complexity estimates are easy to deduct from the above construction and are the same as those of Theorem \ref{thm:scl-dnn-1d} (up to a constant). 
\end{proof}

\subsubsection{Multidimensional scalar conservation laws}\label{sec:multi-d}

We have so far only treated one-dimensional scalar conservation laws. In real life applications, the multidimensional case may be more prevalent. For $m\in\mathbb{N}$, the $m$-dimensional version of \eqref{eqn:original-scl} is given by
\begin{align}\label{eqn:multidim-scl}
\begin{split}
\begin{cases}
    \partial_t u(t,x,y) + \sum_{j=1}^m\partial_{x_j} f_j(u(t,x,y)) = 0,\\
    u(0,x,y) = u_0(x,y),
    \end{cases}
\end{split}
\end{align}
for all $x\in D=[a,b]^m$, $y\in Y=[0,1]^d$ and $t\in[0,T]$. The generalization of assumptions (A1)-(A3) is straightforward. In this setting, we obtain the following approximation result. 

\begin{corollary}[Extension of Theorem \ref{thm:scl-dnn-1d} to multidimensional grids] \label{cor:scl-dnn-multidim}
Let $T>0$, $d,m\in\mathbb{N}$, $a,b\in\mathbb{R}$ with $a<b$ and $D=[a,b]^m$. For $y\in Y=[0,1]^d$, denote by $u(T,\cdot,y)$ the solution at time $t=T$ of \eqref{eqn:original-scl} with flux functions $f_j:\mathbb{R}\to\mathbb{R}$, $1\leq j\leq m$, that satisfy assumption (A1) and initial condition $u_0$ that satisfies assumptions (A2) and (A3). Then for every $N\in\mathbb{N}$, there exist a ReLU neural network $\widehat{\mathcal{U}}^N$ 
%with $O\left(d^{\sigma_1}\epsilon^{-\eta_1}+\epsilon^{-1/\gamma}L_{\widehat{f}}^\epsilon\right)$ hidden layers and $O\left(d^{\sigma_2}\epsilon^{-\eta_2-1/\gamma}+\epsilon^{-2/\gamma}W_{\widehat{f}}^\epsilon\right)$ non-zero weights and biases 
that satisfies the error bound
\begin{equation}\label{eqn:expressivity-dnn-scl-multidim}
    \sup_{y\in Y}\norm{u(T,\cdot,y)-\widehat{\mathcal{U}}^N(\cdot,y)}_{L^1(D)} \leq \frac{2\ctv T\left(C_0\norm{f''}_\infty+18\left(1+\norm{f'}_\infty\right)^{2}\right)+1}{\sqrt{N}}.
t\end{equation}
In addition, it holds that
% \begin{align}
%     \begin{split}
%         \mathcal{M}(\widehat{\mathcal{U}}^N) &= O\left(d^{\sigma_{\mathcal{M}}}\epsilon^{-\eta_{\mathcal{M}}-m/\gamma}+m\epsilon^{-(m+1)/\gamma-1}\right), \\
%         \mathcal{L}(\widehat{\mathcal{U}}^N) &= O\left(d^{\sigma_{\mathcal{L}}}\epsilon^{-\eta_{\mathcal{L}}}+\epsilon^{-1/\gamma}\right), \\
%         \mathcal{W}(\widehat{\mathcal{U}}^N) &= O\left(d^{\sigma_{\mathcal{W}}}\epsilon^{-\eta_{\mathcal{W}}-m/\gamma}+m\epsilon^{-m/\gamma-1}\right), \\ 
%          \mathcal{B}(\widehat{\mathcal{U}}^N) &\leq C, 
%     \end{split}
% \end{align}
\begin{align}
    \begin{split}
        \mathcal{M}(\widehat{\mathcal{U}}^N) &= O \left( d^{\sigma_{\mathcal{M}}}N^{m+\eta_{\mathcal{M}}/2}+mN^{m+3/2}\right), \\
        % \mathcal{L}(\widehat{\mathcal{U}}^N) &\leq C_\mathcal{L} d^{\sigma_{\mathcal{L}}}N^{\eta_{\mathcal{L}}/2}+N, \\
        % \mathcal{W}(\widehat{\mathcal{U}}^N), &\leq  \frac{2(b-a)}{T\norm{f'}_\infty}\max\left\{C_\mathcal{W} d^{\sigma_{\mathcal{W}}}N^{m+\eta_{\mathcal{W}}/2},mN^{m+1/2}\right\}, \\ 
        %  \mathcal{B}(\widehat{\mathcal{U}}^N) &\leq \max_i \max \{C_{\mathcal{B}},C_0+\abs{f_i(-C_0)},C_0+\abs{f_i(C_0)},2 \norm{f'_i}_\infty,(2\norm{f'_i}_\infty)^{-1}\}. 
        \mathcal{L}(\widehat{\mathcal{U}}^N) &=O( d^{\sigma_{\mathcal{L}}}N^{\eta_{\mathcal{L}}/2}+N), \\
        \mathcal{W}(\widehat{\mathcal{U}}^N), &=O( d^{\sigma_{\mathcal{W}}}N^{m+\eta_{\mathcal{W}}/2}+mN^{m+1/2}), \\ 
         \mathcal{B}(\widehat{\mathcal{U}}^N) &=O(1). 
    \end{split}
\end{align}
\end{corollary}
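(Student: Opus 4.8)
The plan is to follow the four-step structure of the proof of Theorem \ref{thm:scl-dnn-1d} essentially verbatim, replacing the one-dimensional Lax--Friedrichs scheme by its $m$-dimensional analogue and tracking the extra spatial cells this creates. Since the corollary keeps the flux fixed (only the initial data is parametric, with $m$ fluxes $f_1,\dots,f_m$ each satisfying (A1)), the Karhunen--Lo\`eve flux-approximation bookkeeping of Theorem \ref{thm:scl-dnn-flux-1d} is not needed; throughout, $\norm{f'}_\infty$ and $\norm{f''}_\infty$ are read as the maxima over $j$ of $\norm{f_j'}_\infty$ and $\norm{f_j''}_\infty$.

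\textbf{Step 1.} I would first invoke the maximum principle for \eqref{eqn:multidim-scl}, which continues to hold in $m$ dimensions, to obtain $\norm{u(T,\cdot,y)}_{L^\infty(D)}\leq C_0$ and thereby restrict each flux $f_j$ to $[-C_0,C_0]$. Applying Lemma \ref{lem:flux-approximation} to each $f_j$ yields ReLU networks $\widehat{f}_j$ obeying the Lipschitz bound \eqref{eqn:flux-approximation}. In parallel I would approximate $u_0(\cdot,y)$ via (A3) and discretize it into the piecewise-constant $\Tilde{u}_0^\epsilon$ on the Cartesian cells $C_{\mathbf{j}}\subset[a,b]^m$. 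Read with the $m$-dimensional total variation (the \emph{straightforward} generalization of (A2)--(A3)), the estimate $\norm{u_0-\Tilde{u}_0^\epsilon}_{L^\infty(Y;L^1(D))}\lesssim \ctv/J+\epsilon$ carries over: the shift term is controlled exactly as in 1D, and the quadrature term is again controlled by a Koksma--Hlawka estimate, now on the tensor grid. The $L^1$ continuous-dependence estimate for entropy solutions with respect to both flux and initial data (\cite{holden2015front}) holds in several dimensions, which reproduces the analogue of \eqref{eqn:step1-result}.

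\textbf{Step 2 and combination.} I would emulate the $m$-dimensional Lax--Friedrichs update
\[
\widehat{U}^{n+1}_{\mathbf{j}} = \frac{1}{2m}\sum_{k=1}^m\left(\widehat{U}^n_{\mathbf{j}+e_k}+\widehat{U}^n_{\mathbf{j}-e_k}\right) - \frac{\Delta t}{2\Delta x}\sum_{k=1}^m\left(\widehat{f}_k(\widehat{U}^n_{\mathbf{j}+e_k})-\widehat{f}_k(\widehat{U}^n_{\mathbf{j}-e_k})\right),
\]
which is a ReLU network because each $\widehat{f}_k$ is. The crucial input is the $m$-dimensional analogue of the Kuznetsov estimate (Lemma \ref{lem:LxF-rate}): under the CFL condition this scheme converges at rate $1/\sqrt{N}$ in $L^\infty(Y;L^1(D))$, reproducing \eqref{eqn:step2-result} up to the constant. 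Choosing $\epsilon=1/\sqrt{N}$ and $K=\lceil\sqrt{N}\rceil$ and combining exactly as in Step 3 of Theorem \ref{thm:scl-dnn-1d} then yields \eqref{eqn:expressivity-dnn-scl-multidim}. For the complexity count, the only essential change is that the grid now has $J^m=O(N^m)$ cells with $J=O(N)$ per coordinate. Hence there are $O(N^m)$ output neurons, each cell reads $2m$ neighbours and needs $2m$ copies of the flux network (of width $O(\sqrt{N})$ and connectivity $O(\sqrt N)$) plus identity channels, and the initial-data block contributes $J^m$ copies of $\widehat{u}_0^\epsilon$. Tracking depth (unchanged at $O(d^{\sigma_{\mathcal{L}}}N^{\eta_{\mathcal{L}}/2}+N)$, since the $2m$ fluxes act in parallel), width $O(d^{\sigma_{\mathcal{W}}}N^{m+\eta_{\mathcal{W}}/2}+mN^{m+1/2})$, and connectivity $O(J^m\mathcal{M}(\widehat{u}_0^\epsilon)+mJ^mNK)$, then inserting $K\leq 2\sqrt{N}$, $J=O(N)$ and (A3) gives the stated bounds.

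The main obstacle is securing the \emph{half-order} convergence rate of the $m$-dimensional Lax--Friedrichs scheme on the Cartesian grid: unlike the one-dimensional situation, the Kuznetsov doubling-of-variables argument is considerably more delicate in several space dimensions, so obtaining the $1/\sqrt{N}$ rate (rather than a weaker power of $N$) in the multidimensional version of Lemma \ref{lem:LxF-rate} is the genuinely technical point. Everything else is a careful but routine extension of the one-dimensional bookkeeping, the only real accounting subtlety being the exponential-in-$m$ cell count $J^m$ that produces the $N^m$ factors in the size estimates.
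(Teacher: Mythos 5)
Your overall architecture and your complexity bookkeeping match the paper's proof: Step 1 carries over unchanged, the scheme is emulated cell by cell, the $J^m = O(N^m)$ cell count produces the $N^m$ factors in $\mathcal{M}$ and $\mathcal{W}$ while depth and weight magnitude are untouched, and the final error combination is identical. The one substantive divergence is the choice of numerical scheme, and it matters. You emulate the \emph{unsplit} $m$-dimensional Lax--Friedrichs update (with the $\tfrac{1}{2m}$ averaging) and then correctly observe that the whole argument hinges on a half-order Kuznetsov estimate for this genuinely multidimensional scheme --- which you flag as ``the genuinely technical point'' but do not supply. That is a real hole in your route: without that lemma, \eqref{eqn:expressivity-dnn-scl-multidim} is not established. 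The paper closes exactly this hole by a different device: it approximates \eqref{eqn:multidim-scl} via \emph{dimensional splitting}, solving one space direction at a time, so that the convergence rate reduces to repeated applications of the already-proven one-dimensional estimate (Lemma \ref{lem:LxF-rate}) combined with the standard $L^1$-contractivity/splitting-error theory of \cite{holden2015front}. Since each directional sweep is a one-dimensional Lax--Friedrichs update realizable by the same ReLU subnetworks, the network construction and the size estimates are unaffected by this substitution. If you replace your unsplit scheme by the splitting scheme, your proof goes through essentially as written and coincides with the paper's; as it stands, your version defers the hardest analytic ingredient to an unproven multidimensional Kuznetsov lemma.
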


\begin{proof}
The proof of the corollary only requires a small generalization of the proof of Theorem \ref{thm:scl-dnn-1d}. Step 1 still holds for multidimensional scalar conservation laws. We will numerically approximate the solution of \eqref{eqn:multidim-scl} using dimensional splitting methods, i.e. by solving one space direction at a time. For $1\leq i,k \leq m$, let $e_i\in\mathbb{R}^m$ be the vector that satisfies $(e_i)_k = \delta_{ik}$. The multidimensional update formula of the Lax-Friedrichs scheme \eqref{eqn:lxf} is then given by
\begin{equation}
    \widehat{U}^{n+1}_j(y) = \sum_{i=1}^m\left(\frac{\widehat{U}^n_{j-e_i}(y)+\widehat{U}^n_{j+e_i}(y)}{2} - \frac{\Delta t}{2 \Delta x_i}(\widehat{f}_i(\widehat{U}^n_{j+e_i}(y))-\widehat{f}_i(\widehat{U}^n_{j-e_i}(y)))\right), 
\end{equation}
where $j$ is an $m$-dimensional vector that indicates the cell location. The rest of step 2 can be taken over mutatis mutandis. It only remains to determine the size of the network. As only the connectivity and maximum width of the network depend on $m$, $\mathcal{L}(\widehat{\mathcal{U}}^N)$ and $\mathcal{B}(\widehat{\mathcal{U}}^N)$ are the same as in Theorem \ref{thm:scl-dnn-1d}. In the notation of the proof of Theorem \ref{thm:scl-dnn-1d}, we let $N$ be the number of time steps, such that there are now $O(N^m)$ grid points. The first explains the factor $O(N^m)$ in the first term in $\mathcal{M}(\widehat{\mathcal{U}}^N)$ and $\mathcal{W}(\widehat{\mathcal{U}}^N)$. For every $f_i$, the size of the approximating network is given by Lemma \ref{lem:flux-approximation}. Consequently, the second term $\mathcal{W}(\widehat{\mathcal{U}}^N)$ follows directly as there are $m$ flux functions of maximal width $2\sqrt{N}$ that need to be evaluated on $O(N^m)$ grid points. For $\mathcal{M}(\widehat{\mathcal{U}}^N)$, the result is obtained by multiplying with the number of time steps. This proves the result.
\end{proof}

\section{Analysis of the generalization error}\label{sec:gen}

In the previous section, we have established that the solutions of scalar conservation laws with parametric flux and initial data can be efficiently approximated using ReLU neural networks, without suffering from the curse of dimensionality. In practice, one must try to retrieve (or \emph{train}) this neural network based on an available, finite data set. Details of this training procedure can be found in the next subsection. Next, we estimate how accurately this \textit{trained} neural network approximates the true solution. This will be done by proving an upper bound on the so-called cumulative generalization error.

\subsection{Training neural networks}

For $d\in\mathbb{N}$, let $Y=[0,1]^d$ be the space of parameters of both the flux and initial data (i.e. we simplify notation by replacing $Y\times Z$ with $Y$) and let $(Y, \Sigma, \mu)$ be a complete probability space. For $T>0$, the goal is to accurately approximate the mapping $y\in Y \mapsto u(T,\cdot,y)$ by a neural network $u_\theta$ (in the sense of Definition \ref{def:sol-associated-dnn}) where $\theta\in\Theta$ are the parameters of the network. To do so, we draw $M\in\mathbb{N}$ random parameters from $Y$ according to $\mu$, which we will denote by $\S = (y_1, \ldots, y_M)$, and create the \textit{training set} $\mathbb{S} = \{(y,u(T,\cdot,y)) \:\vert\: y\in \S\}$. Using this training set, we define the loss function 
\begin{equation}
    \mathcal{J}(\theta;\S,\lambda) = \frac{1}{M}\sum_{i=1}^M\norm{u(T,\cdot,y_i)-u_{\theta}(\cdot,y_i)}_{L^1(D)} + \lambda \mathcal{R}(\theta),
\end{equation}
where the first sum measures the discrepancy between the true solution and its approximation and the last term is a regularization term that prevents overfitting and hence improves the generalization capabilities of the network. The network hyperparameter $\lambda > 0$ controls the extent of regularization. A popular choice of the regularization function is $\mathcal{R}(\theta) = \norm{\theta}^p_p$ for $p=1,2$. The \textit{training} of the neural network (finding the best neural network approximation) then boils down to finding the parameter $\theta^*(\S)$ that minimizes the loss function: 
\begin{equation}
    \theta^*(\S) = \arg \min_{\theta\in\Theta} \mathcal{J}(\theta;\S,\lambda). 
\end{equation}
This possibly highly non-convex optimization problem is usually solved using an algorithm based on stochastic gradient descent. If the training was successful, the network $u_{\theta^*(\S)}$ will be a good approximation of $u$ on $\S$, but there is no a priori guarantee that $u_{\theta^*(\S)}$ is close to $u$ on $Y\setminus \S$. This is the topic of the next section. 

\subsection{Estimate on the cumulative generalization error}

In this section we will prove that if the training set is large enough, the neural network $u_{\theta^*(\S)}$ will on average be a good approximation of the true solution $u$. In order to do so, we will introduce some quantities. 
For a vector of weights and biases $\theta\in\Theta$, we define the training error 
\begin{equation}\label{eqn:training-error}
    \Et(\theta,\S) = \frac{1}{M}\sum_{i=1}^M\norm{u(T,\cdot,y_i)-u_{\theta}(\cdot,y_i)}_{L^1(D)}
\end{equation}
and the generalization error
\begin{equation}\label{eqn:generalization-error}
    \Eg(\theta) = \int_Y \norm{u(T,\cdot,y)-u_{\theta}(\cdot,y)}_{L^1(D)} d\mu(y). 
\end{equation}
The training error is nothing more than the loss function without the regularization term and quantifies the accuracy of $u_\theta$ on $\S$. We will be particularly interested in the choice $\theta=\theta^*(\S)$, where $\theta^*(\S)$ is the vector that minimizes the loss function. In this case, the generalization error quantifies how well the approximation $u_{\theta^*(\S)}$ generalizes from $\S$ to $Y$. Note that we are interested in the generalization error, but we have only access to the training error. Intuitively, one can generally not expect to infer bounds on the generalization error from the training error as the training set might be ill-conditioned, e.g. it might be that $y_1=y_2=\ldots=y_M$. However, we can hope for finding a connection between $\Et(\theta,\S)$ and $\Eg(\theta)$ that holds with a certain probability or that holds averaged over all training sets. Keeping this in mind, we define the cumulative training error
\begin{equation}\label{eqn:acc-training-error}
    \Aet = \int_{Y^M}\Et(\theta^*(\S),\S) d\mu^M(\S)
\end{equation}
and the cumulative generalization error
\begin{equation}\label{eqn:acc-generalization-error}
    \Aeg = \int_{Y^M}\Eg(\theta^*(\S)) d\mu^M(\S). 
\end{equation}

To prove an upper bound on $\Aeg$ we first decompose the error as such,
\begin{equation}
    \Eg(\theta^*(\S)) \leq \abs{\Eg(\theta^*(\S))-\Et(\theta^*(\S),\S)} + \Et(\theta^*(\S),\S),
\end{equation}
and then take the expectation under $\mu^M$ and the supremum over all $\theta\in\Theta$ to obtain,
\begin{equation}\label{eq:error-decomposition}
    \Aeg \leq \Aet+ \mathbb{E}\left[\sup_{\theta\in\Theta}\abs{\Eg(\theta)-\Et(\theta,\S)}\right].
\end{equation}
This inequality provides an easy recipe to bound the cumulative generalization error. We assume that $\Aet$ can be made arbitrarily small by letting the stochastic gradient descent algorithm run for enough epochs. There are more rigorous upper bounds available \cite{beck2020error,jentzen2020overall}, but these tend to be large overestimates of the true value of $\Aet$. Alternatively, it is common practice to retrain networks multiple times. One could then estimate $\Aet$ by the average of the training errors. Hence, it only remains to find an upper bound on the left term of the right-hand side of \eqref{eqn:bound-Eg}. This term is often called the generalization gap, as it measures the discrepancy between generalization and training error. We will find an upper bound for the generalization gap by adapting \cite[Corollary 4.15]{jentzen2020overall} to our setting. To do so, we introduce the notion of the covering number of a metric space, which will play a crucial role in bounding the generalization gap. In addition, we prove a number of auxiliary results in Appendix \ref{app:gen}. 

\begin{definition}\label{def:covering}
Let $(E,d)$ be a metric space and let $r\in[0,+\infty]$. Then we refer to 
\begin{equation}
    \mathcal{C}^{(E,d)}_r := \inf\left\{n\in\mathbb{N}_0\: \vert\: \exists A \subseteq E : (\abs{A}= n) \land (\forall x \in E: \exists a\in A: d(x,a)\leq r) \right\}
\end{equation}
as the covering number of $(E,d)$.
\end{definition}

\begin{theorem}\label{thm:generalization-gap}
Let $R\geq 1$, $k,d,W,L,M\in\mathbb{N}$, $a,b,\alpha,\beta\in\mathbb{R}$ with $a<b$ and $\beta-\alpha\geq 1$ and let $\Theta$ be the $k$-dimensional parameter space corresponding to $(\alpha,\beta)$-clipped ReLU neural networks $\Phi_\theta$ such that $\mathcal{W}(\Phi_\theta)\leq W$, $\mathcal{L}(\Phi_\theta)\leq L$ and $\mathcal{B}(\Phi_\theta)\leq R$ for all $\theta \in \Theta$ (cf. Definitions \ref{def:relu-nn} and \ref{def:clipped-dnn}). Let $D=[a,b]$ and let $D\times [0,1]^d$ be the domain of the solution maps of the associated neural networks (cf. Definition \ref{def:sol-associated-dnn}). Let $\S$ be the training set drawn according to $\mu^M$ such that $\abs{\S}=M$. For $\theta\in\Theta$, let $\Et(\theta,\S)$ and $\Eg(\theta)$ be given by \eqref{eqn:training-error} and \eqref{eqn:generalization-error}. It then holds that
\begin{align}
    \begin{split}
&\mathbb{E}\left[\sup_{\theta\in\Theta}\abs{\Et(\theta,\S)-\Eg(\theta)}\right]  \leq \frac{12(\beta-\alpha)(b-a)L(W+1)\sqrt{\ln\left(2M^{1/3L}R(W+1)\right)}}{\sqrt{M}}. 
    \end{split}
\end{align}
Moreover, the following simplified bound also holds, 
\begin{equation}
\mathbb{E}\left[\sup_{\theta\in\Theta}\abs{\Et(\theta,\S)-\Eg(\theta)}\right] \leq  \frac{12(\beta-\alpha)(b-a)L(W+1)^2\sqrt{\ln\left(2MR\right)}}{\sqrt{M}}.
\end{equation}
\end{theorem}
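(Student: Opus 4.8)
The plan is to recognize the generalization gap as the supremum of a centered empirical process indexed by the parameter vector $\theta$, and to control it by a standard net-plus-concentration argument. Writing $\Phi(y,\theta):=\norm{u(T,\cdot,y)-u_\theta(\cdot,y)}_{L^1(D)}$, we have $\Et(\theta,\S)-\Eg(\theta)=\frac{1}{M}\sum_{i=1}^M\Phi(y_i,\theta)-\mathbb{E}_\mu[\Phi(\cdot,\theta)]$, which for each fixed $\theta$ is a mean-zero average of i.i.d.\ bounded random variables (the $y_i$ being drawn according to $\mu^M$). Two ingredients drive the estimate: a uniform bound on $\Phi$ and a Lipschitz bound of $\Phi$ in $\theta$.

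First I would establish boundedness. Since $\Phi_\theta$ is $(\alpha,\beta)$-clipped, $u_\theta(\cdot,y)$ takes values in $[\alpha,\beta]$, and the true solution lies in the same range (this being the purpose of the clipping interval), so $\abs{u(T,x,y)-u_\theta(x,y)}\le\beta-\alpha$ pointwise and hence $\Phi(y,\theta)\le(\beta-\alpha)(b-a)=:B$ for every $y,\theta$. Next I would prove the Lipschitz estimate. Because $u_\theta(\cdot,y)$ is piecewise constant on the $J$ cells of equal length $(b-a)/J$, the reverse triangle inequality gives $\abs{\Phi(y,\theta)-\Phi(y,\vartheta)}\le\norm{u_\theta(\cdot,y)-u_\vartheta(\cdot,y)}_{L^1(D)}=\frac{b-a}{J}\sum_j\abs{(\Phi_\theta(y))_j-(\Phi_\vartheta(y))_j}\le(b-a)\norm{\Phi_\theta(y)-\Phi_\vartheta(y)}_\infty$. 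It then remains to invoke the parameter-Lipschitz bound for ReLU realizations (one of the auxiliary results of Appendix~\ref{app:gen}), which for networks with $\mathcal{W}\le W$, $\mathcal{L}\le L$, $\mathcal{B}\le R\ge1$ and inputs in $[0,1]^d$ yields an estimate of the order $\norm{\Phi_\theta(y)-\Phi_\vartheta(y)}_\infty\le L(W+1)^LR^{L-1}\norm{\theta-\vartheta}_\infty$, so that $\Phi$ is Lipschitz in $\theta$ with a constant $\Lambda$ of this order, uniformly in $y$.

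With these two bounds in hand I would run the covering argument. The parameter set sits inside the $k$-dimensional cube $[-R,R]^k$ with $k\le L(W+1)^2$, so its covering number satisfies $\mathcal{C}^{(\Theta,\norm{\cdot}_\infty)}_r\le(1+R/r)^k$. Fixing an $r$-net $\{\theta_1,\dots,\theta_{\mathcal{N}}\}$, the Lipschitz bound gives $\sup_\theta\abs{\Et(\theta,\S)-\Eg(\theta)}\le\max_i\abs{\Et(\theta_i,\S)-\Eg(\theta_i)}+2\Lambda r$, while for each center Hoeffding's inequality makes $\Et(\theta_i,\S)-\Eg(\theta_i)$ sub-Gaussian with variance proxy $B^2/(4M)$; the sub-Gaussian maximal inequality then bounds the expected maximum over the net by $\frac{B}{2\sqrt{M}}\sqrt{2\ln(2\mathcal{N})}$. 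Combining yields $\mathbb{E}[\sup_\theta\abs{\Et-\Eg}]\le 2\Lambda r+\frac{B}{2\sqrt{M}}\sqrt{2k\ln(1+R/r)}$.

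The final step, and the only delicate one, is to choose $r$ so that the two terms collapse into the stated form. Taking $r$ of order $(\Lambda\sqrt{M})^{-1}$ forces $2\Lambda r\lesssim M^{-1/2}$, and substituting $\Lambda\approx L(W+1)^LR^{L-1}(b-a)$ into $\ln(1+R/r)$ produces a contribution of order $L\ln((W+1)R)+\tfrac{1}{2}\ln M$; with $k\le L(W+1)^2$ the square root factors as $\sqrt{k\ln(1+R/r)}\approx L(W+1)\sqrt{\ln((W+1)R)+\tfrac{1}{2L}\ln M}$, where the $\tfrac{1}{2L}\ln M=\ln M^{1/2L}$ term is exactly what surfaces inside the logarithm as the power $M^{1/(3L)}$ once the numerical constants are tracked. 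The main obstacle is precisely this bookkeeping: keeping the exponential-in-$L$ Lipschitz constant $\Lambda$ under control so that, upon taking the logarithm of the covering number and pulling a factor $L$ out of the square root, the exponential growth disappears and leaves the clean prefactor $(\beta-\alpha)(b-a)L(W+1)$. Absorbing the numerical constants into the factor $12$ then gives the first inequality, and the simplified bound follows by crudely estimating $M^{1/3L}\le M$ and dominating the remaining $L$ and logarithmic factors by $(W+1)$, producing the stated $(W+1)^2\sqrt{\ln(2MR)}$ form.
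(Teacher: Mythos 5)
Your proposal is correct and follows essentially the same route as the paper: boundedness of the per-sample loss via the clipping assumption, Lipschitz continuity of $\Et$ in $\theta$ with constant of order $(b-a)LR^{L-1}(W+1)^L$, a covering-number bound for the parameter cube, and a net-plus-concentration argument whose radius is optimized so that the exponential-in-$L$ Lipschitz constant is absorbed into the logarithm and a factor $L$ is pulled out of the square root. The only cosmetic difference is that you write out the Hoeffding/sub-Gaussian maximal-inequality step explicitly, whereas the paper delegates it to the cited moment bounds of Jentzen et al.\ (their Corollary 4.12 and Proposition 4.14).
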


\begin{proof}
We first generalize the proof of \cite[Lemma 4.13]{jentzen2020overall}. For every $\theta\in\Theta$, the measurability of the mapping $Y^M\to\mathbb{R}:\S\to\Et(\theta,\S)$ follows from Lemma \ref{lem:measurability}. 
We have that for every $\theta\in\Theta$ it holds that $\Et(\theta,\{y_i\})$, $1\leq i\leq M$, are i.i.d. random variables and therefore
\begin{equation}
    \mathbb{E}\left[\Et(\theta,\S)\right] = \Eg(\theta). 
\end{equation}
Furthermore the assumption that $u_\theta, u\in[\alpha,\beta]$ together with definitions \eqref{eqn:training-error} and \eqref{eqn:generalization-error} ensure that for all $\theta\in\Theta$ and $1\leq i\leq M$ that
\begin{equation}
    \left(\mathbb{E}\left[\sup_{\theta\in\Theta}\abs{\Et(\theta,\{y_i\})-\Eg(\theta)}^p\right]\right)^{1/p} \leq (\beta-\alpha)(b-a). 
\end{equation}
In addition, Lemma \ref{lem:Et-lipschitz} with $p=1$ tells us that $\Et(\theta,\S)$ is Lipschitz in $\theta$ with a Lipschitz constant of at most $(b-a)L^*$ where $L^* := LR^{L-1}(W+1)^L$. 
Combining all the previous observations allows us to use \cite[Corollary 4.12]{jentzen2020overall} to conclude that for any $C>0$, $q\geq 2$,
\begin{align}
\begin{split}
& \left(\mathbb{E}\left[\sup_{\theta\in\Theta}\abs{\Et(\theta,\S)-\Eg(\theta)}^q\right]\right)^{1/q} \\
&\qquad\leq \left(\mathcal{C}^{(\Theta,\norm{\cdot}_\infty)}_{\frac{C(\beta-\alpha)(b-a)\sqrt{q-1}}{(b-a)L^*\sqrt{M}}}\right)^{1/q}\left(\frac{2(C+1)(\beta-\alpha)(b-a)\sqrt{q-1}}{\sqrt{M}}\right),\\
\end{split}
\end{align}
which is the equivalent of \cite[Lemma 4.13]{jentzen2020overall}. We proceed by following the steps from the proof of \cite[Proposition 4.14]{jentzen2020overall}, mutatis mutandis. We restrict ourselves to the main steps and refer to \cite{jentzen2020overall} for the detailed calculations. Lemma \ref{lem:covering} gives us that for $r>0,$
\begin{equation}
    \mathcal{C}^{(\Theta,\norm{\cdot}_\infty)}_r \leq \max\{1,(2R/r)^{k}\}. 
\end{equation}
Following the steps from \cite[Proposition 4.14]{jentzen2020overall} with $\kappa_C \leftarrow 2RL^*\sqrt{M}/(C(\beta-\alpha))$ and $C \leftarrow 1$, we obtain that
\begin{equation}
\mathbb{E}\left[\sup_{\theta\in\Theta}\abs{\Et(\theta,\S)-\Eg(\theta)}\right] \leq \frac{4(\beta-\alpha)(b-a)\sqrt{e\max\{1,k\ln(4R^2M(L^*)^2)\}}}{\sqrt{M}}
\end{equation}
Next, we roughly follow the steps of \cite[Corollary 4.15]{jentzen2020overall}. Note that $k \leq LW(W+1)\leq L(W+1)^2$ and $4L^2\leq 2^2\cdot 2^{2(L-1)}\leq 2^{2L}$ to see that
\begin{align}
\begin{split}
k\ln(4R^2M(L^*)^2) &\leq L(W+1)^2 \ln\left(4R^2M(L^*)^2\right) \\
& =  L(W+1)^2\ln\left(4ML^2R^{2L}(W+1)^{2L}\right)\\
& \leq  L(W+1)^2\ln\left(M2^{2L}R^{2L}(W+1)^{2L}\right)
\end{split}
\end{align}
Since it holds that $L,R,W\geq 1$ we also have that 
\begin{align}
    \begin{split}
        \ln\left(M2^{2L}R^{2L}(W+1)^{2L}\right) &= 3L\ln\left(\left[M2^{2L}R^{2L}(W+1)^{2L}\right]^{1/3L}\right)\\
        &\leq 3L \ln\left(2M^{1/3L}R(W+1)\right).
    \end{split}
\end{align}
Combining the previous observations with the fact that $2M^{1/3L}R(W+1)\geq e$ and therefore $3L \ln\left(2M^{1/3L}R(W+1)\right)\geq 1$ gives us
\begin{align}
    \begin{split}
&\mathbb{E}\left[\sup_{\theta\in\Theta}\abs{\Et(\theta,\S)-\Eg(\theta)}\right] \\
&\qquad \leq \frac{4(\beta-\alpha)(b-a)\sqrt{e\cdot 3L^2(W+1)^2 \ln\left(2M^{1/3L}R(W+1)\right)}}{\sqrt{M}}\\
& \qquad \leq \frac{12(\beta-u)(b-a)L(W+1)\sqrt{\ln\left(2M^{1/3L}R(W+1)\right)}}{\sqrt{M}},
    \end{split}
\end{align}
which gives us our main result. We now further simplify the bound. First notice that 
\begin{equation}
    2(W+1) \leq 2\cdot 2^{(W+1)-1} \leq 2^{W+1},
\end{equation}
leading us to the bound
\begin{align}
    \begin{split}
        \ln\left(2M^{1/3L}R(W+1)\right) & \leq (W+1) \ln\left(\left[2M^{1/3L}R2^{W+1}\right]^{1/(W+1)}\right)\\
        & \leq (W+1)\ln(2MR).
    \end{split}
\end{align}
We finally obtain 
\begin{equation}
\mathbb{E}\left[\sup_{\theta\in\Theta}\abs{\Et(\theta,\S)-\Eg(\theta)}\right] \leq  \frac{12(\beta-\alpha)(b-a)L(W+1)^2\sqrt{\ln\left(2MR\right)}}{\sqrt{M}}.
\end{equation}

\end{proof}

Combining \eqref{eq:error-decomposition} with Theorem \ref{thm:generalization-gap} then provides an elegant, yet explicit and computable upper bound on the cumulative generalization error $\Aeg$, as $\Aet$ can be estimated by e.g. the average of the training errors for multiple random seeds. 

\begin{corollary}\label{cor:total-bound-eg}
Assume the setting and notation of Theorem \ref{thm:generalization-gap}. It holds that,
\begin{equation}\label{eqn:bound-Eg}
    \Aeg \leq \Aet+  \frac{12(\beta-\alpha)(b-a)L(W+1)^2\sqrt{\ln\left(2MR\right)}}{\sqrt{M}}. 
\end{equation}
\end{corollary}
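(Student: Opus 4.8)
The plan is to simply chain the two inequalities that have already been assembled in the preceding discussion, so the proof is essentially a one-line substitution. First I would recall the error decomposition \eqref{eq:error-decomposition}. It was obtained by bounding $\Eg(\theta^*(\S))$ pointwise in $\S$ by $\abs{\Eg(\theta^*(\S))-\Et(\theta^*(\S),\S)} + \Et(\theta^*(\S),\S)$, integrating against $\mu^M$ over $Y^M$, and then enlarging the generalization-gap term by passing to the supremum over all $\theta\in\Theta$ (which only increases the right-hand side). This yields $\Aeg \leq \Aet + \mathbb{E}\left[\sup_{\theta\in\Theta}\abs{\Eg(\theta)-\Et(\theta,\S)}\right]$, precisely the quantities defined in \eqref{eqn:acc-training-error} and \eqref{eqn:acc-generalization-error}.

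Second I would invoke the simplified bound established in Theorem \ref{thm:generalization-gap}. Since $\abs{\Eg(\theta)-\Et(\theta,\S)} = \abs{\Et(\theta,\S)-\Eg(\theta)}$, the expected supremum appearing in the decomposition is exactly the generalization-gap quantity estimated there. Substituting the bound $\frac{12(\beta-\alpha)(b-a)L(W+1)^2\sqrt{\ln\left(2MR\right)}}{\sqrt{M}}$ into the decomposition above then gives \eqref{eqn:bound-Eg} immediately.

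I do not expect any substantive obstacle here: the corollary is a direct consequence of combining \eqref{eq:error-decomposition} with Theorem \ref{thm:generalization-gap}, and the only point worth checking is that the hypotheses of the theorem (the $(\alpha,\beta)$-clipped network setting with $R\geq 1$ and $\beta-\alpha\geq 1$, the $k$-dimensional parameter space $\Theta$, and the solution-map domain $D\times[0,1]^d$) carry over unchanged, which the corollary secures by assuming ``the setting and notation of Theorem \ref{thm:generalization-gap}'' outright. The clipping hypothesis is what makes the earlier uniform bound $\bigl(\mathbb{E}[\sup_{\theta}\abs{\Et(\theta,\{y_i\})-\Eg(\theta)}^p]\bigr)^{1/p}\leq(\beta-\alpha)(b-a)$ applicable, and since that assumption is inherited, nothing further needs to be verified.
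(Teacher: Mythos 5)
Your proposal is correct and matches the paper's own (implicit) argument exactly: the corollary is stated immediately after the remark that combining the decomposition \eqref{eq:error-decomposition} with the simplified bound of Theorem \ref{thm:generalization-gap} yields \eqref{eqn:bound-Eg}, which is precisely the chain of substitutions you describe. Your additional check that the hypotheses of the theorem are inherited verbatim is sound and requires nothing further.
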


\section{Numerical experiments}\label{sec:numex}

We present some numerical experiments to empirically validate the theoretical results from the previous section. In particular, we will show that the generalization error grows at most polynomially as a function of the dimension of the parameter space for a neural network of fixed size. 

\subsection{Methodology}

We will train networks that approximate the mapping from $(x,y)$ to $u(T,x,y)$ for $T=0.1$. Instead of calculating the $L^1$-norms in the training \eqref{eqn:training-error} and generalization error \eqref{eqn:generalization-error} exactly, we make the approximation 
\begin{equation}
    \norm{u(T,\cdot,y)-u_{\theta}(\cdot,y)}_{L^1(D)} \approx \sum_{j=1}^J \abs{u(T,x_j,y)-u_{\theta}(x_j,y)},
\end{equation}
where $x_j$ are $J=100$ equidistant points on $[a,b]$. Hence, $M$ iid generated parameters $y_1, \ldots, y_M \sim U([0,1]^d)$ give rise to a training set of size $M\cdot J$. In the first experiment, we will consider random initial data and a fixed flux and vice versa in the second experiment. For both experiments, we consider networks with $2,4,6,8$ hidden layers with each $5,10,15,20$ neurons and we choose the network architecture that minimizes the median validation error over 5 runs for $d=4$. In our experiment, this is a DNN with 4 hidden layers and 20 neurons per layer. We then use this fixed architecture for all input dimensions and report the training and generalization error averaged over 5 runs. We minimize the training error \eqref{eqn:training-error} using Adam \cite{kingma2014adam} and train the network for $10^4$ epochs. 

\subsection{Fixed flux}
We first consider the one-dimensional inviscid Burgers' equation, i.e. scalar conservation law \eqref{eqn:original-scl} with $f(u)=u^2/2$. We take the initial data in the form of a truncated Karhunen-Loève expansion,
\begin{equation}
    u_0(x,y) = 1 + \sum_{k=1}^d y_k 2^{1-k} \sin(kx), 
\end{equation}
where $x\in [0,1], y\in[0,1]^d$. In order to create the training set, we generate $M$ independent realizations of $y$ and do the same for the test set. We generate the training (and test) samples using the Rusanov flux, second-order reconstruction with the minmod limiter and SSP2-RK time stepping. An example of $u_0(\cdot, y)$ and $u(T,\cdot, y)$ for some $y$ is given in Figure \ref{fig:example-initial}. In the same figure, the prediction by the trained neural network is shown as well. 
\begin{figure}
    \centering
    \begin{subfigure}{.49\textwidth}
     \includegraphics[width=\textwidth]{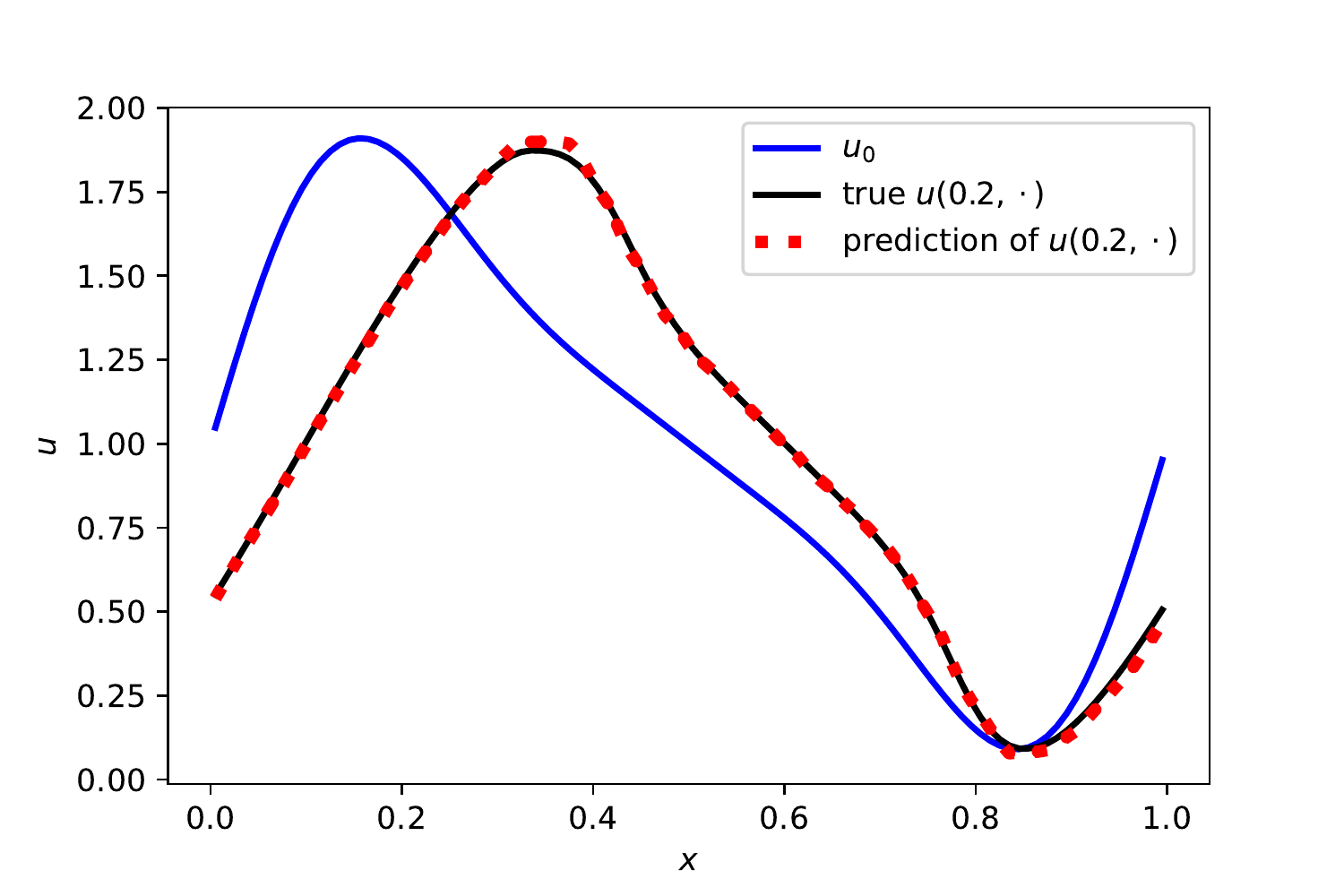}
    \end{subfigure}
  \begin{subfigure}{.49\textwidth}
     \includegraphics[width=\textwidth]{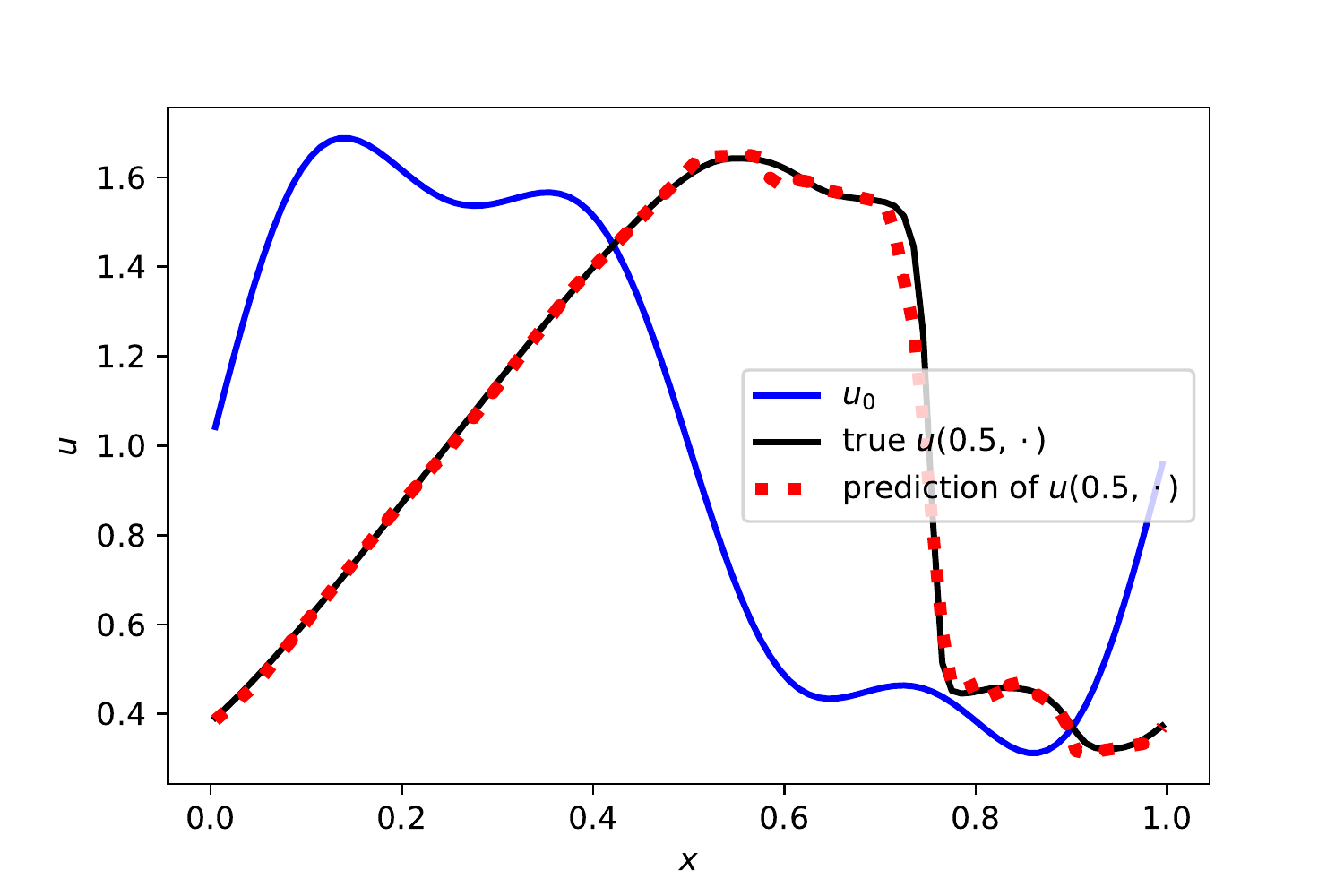}
    \end{subfigure}
    \caption{Example of random initial data, the corresponding true solution at $T=0.2$ and $T=0.5$ and the solution predicted by the neural network.}
    \label{fig:example-initial}
\end{figure}
In Figure \ref{fig:initial-d}, we examine how the training and generalization error depend on the parameter dimension $d$ for two training sets of different size. In both cases, the logarithmic plot reveals that the training and generalization errors only polynomially depend on the parameter dimension $d$. For the smallest training set, of size $M=50$, the generalization error is high for large $d$ and significantly larger than the training data. This signals that the network is not trained in a satisfactory way, in this case due to insufficient training data. For $M=500$, the errors are much smaller and the network generalizes well. 
\begin{figure}
    \centering
    \begin{subfigure}{.49\textwidth}
       \includegraphics[width=\textwidth]{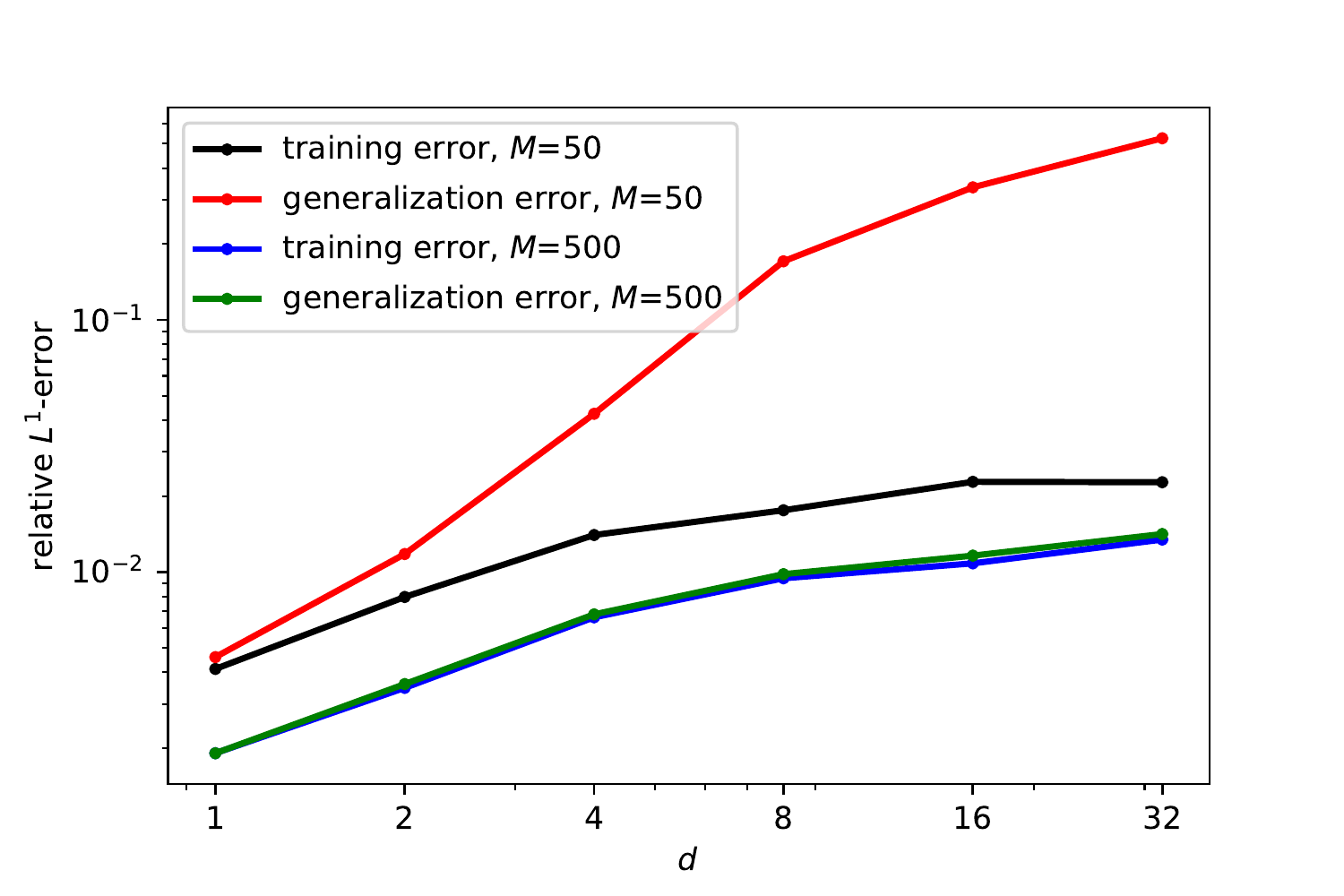}
    \caption{Dependence of the training and generalization error on $d$ for a DNN approximating the solution of Burgers' equation with parameteric initial data. }
    \label{fig:initial-d}
    \end{subfigure}
    \begin{subfigure}{.49\textwidth}
    \includegraphics[width=\textwidth]{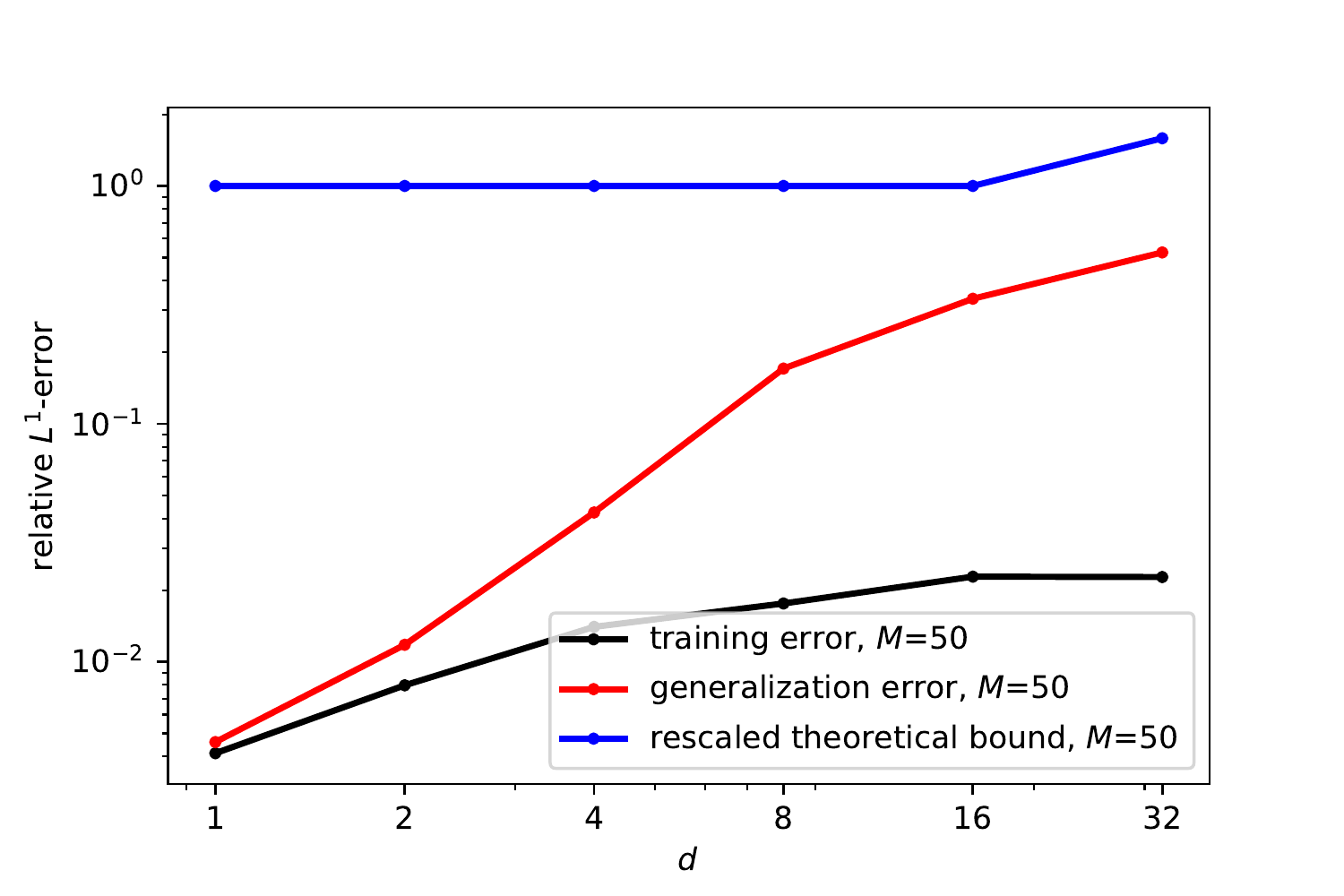}
    \caption{Training and generalization error for $M=50$ in the fixed flux case. A rescaled version of our theoretical bound is shown (rescaled by a factor of $9\cdot 10^2$). }
    \label{fig:comparison-bound2}
    \end{subfigure}
    \caption{Experiments for Burgers' equation with parameteric initial data with fixed flux.}
\end{figure}

% Figure \ref{fig:comparison-bound} shows a comparison between our theoretical error bound from Corollary \ref{cor:total-bound-eg} and the empirically observed errors. The theoretical error bound is rescaled by a factor $9\cdot 10^8$, making clear that the bound is a gross overestimate, as is common for these type of machine learning estimate. 

% \begin{figure}
%     \centering
%     \includegraphics[scale=0.6]{}
%     \caption{Training and generalization error for $M=50$ in the fixed flux case. A rescaled version of the theoretical bound from Corollary \ref{cor:total-bound-eg} is shown (rescaled by a factor of $9\cdot 10^8)$. }
%     \label{fig:comparison-bound}
% \end{figure}

Figure \ref{fig:comparison-bound2} also shows a comparison between a theoretical bound and the empirically observed errors. We use Corollary \ref{cor:total-bound-eg} with $L=4$ and $W = \max\{d,20\}$ for the generalization gap and Corollary \ref{cor:scl-dnn-1d-kle} to get an estimate on the approximation error. The theoretical error is rescaled by a factor of $9\cdot 10^2$ for clarity, making clear that the bound is a gross overestimate, as is common for these type of machine learning estimates.

From Figure \ref{fig:initial-d}, it was already clear that the generalization gap decreases as the size of the training set increases. To further explore this observation, we train DNNs for $d=8$ and 
$M=25,50,100,200,400,800$ and repeat this 25 times. In Figure \ref{fig:M-dependence}, the average training and generalization errors (and standard errors) are shown. It is clear that the generalization gap decreases as $M$ grows large. As a consequence, the generalization error converges to the training error. 

\begin{figure}
    \centering
    \includegraphics[scale=0.6]{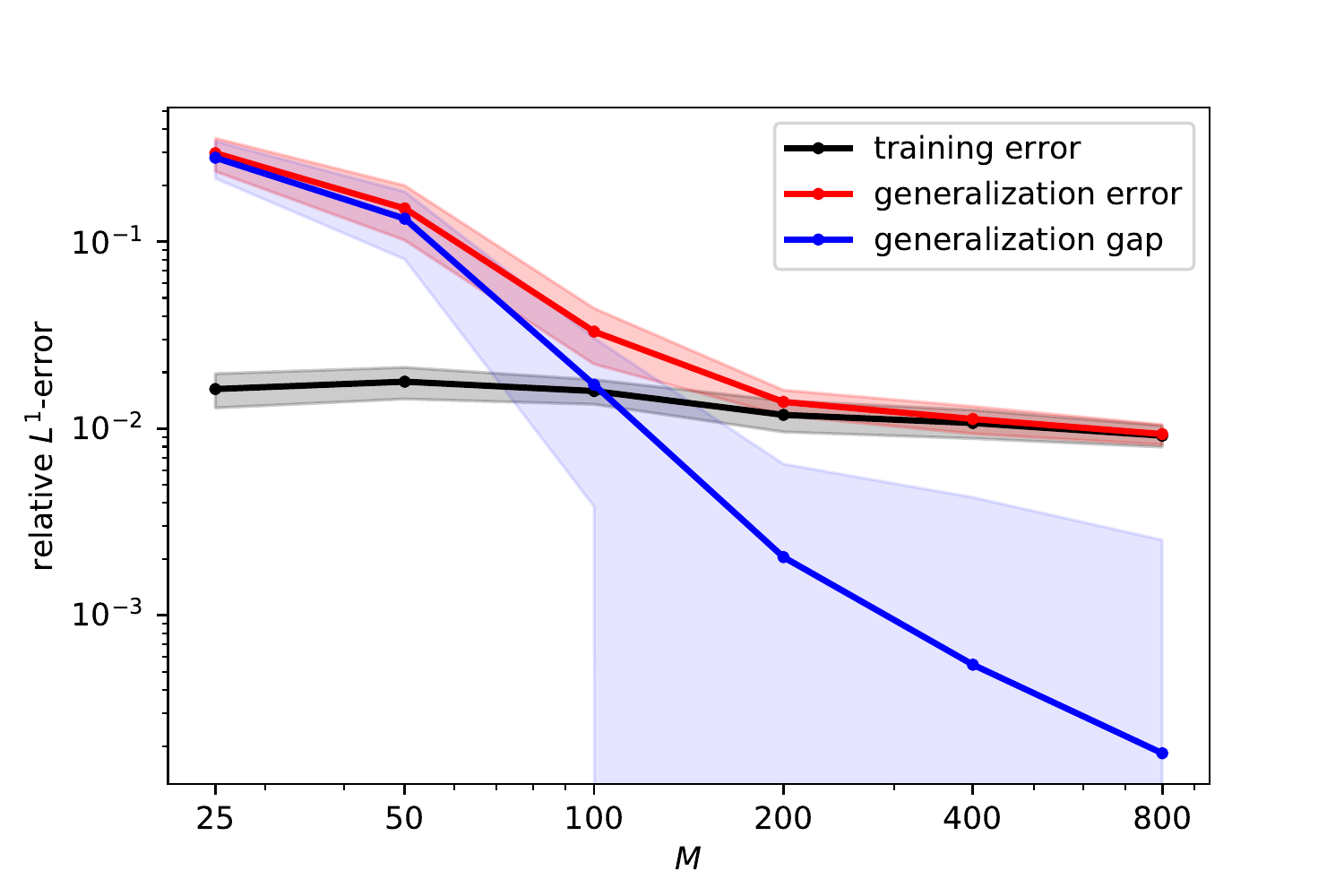}
    \caption{Dependence of the average training error, generalization error and generalization gap on $M$ for $d=8$ and 25 realizations. The shaded area indicates the standard error. }
    \label{fig:M-dependence}
\end{figure}

% In Figure \ref{fig:initial-neurons} we demonstrate how the training and generalization error depend on the network width.

% \begin{figure}
%     \centering
%     \includegraphics[scale=0.6]{}
%     \caption{Dependence of the training and generalization error on $d$ for different values of $M$. }
%     \label{fig:initial-neurons}
% \end{figure}

\subsection{Fixed initial condition}

We repeat the experiment from the previous section for the case where the flux function is random, but the initial data is fixed. We choose for $x\in [0,1]$,
\begin{equation}
    u_0(x) = 1 + \sin(x). 
\end{equation}
Following the numerical experiment in \cite{mishra2016numerical}, the random component of the flux function will be approximated by a Karhunen-Loève expansion, of which the eigenvalues $\lambda_k$ and eigenfunctions $\varphi_k$ are those given by 
\begin{equation}\label{eqn:eqn-kle}
    \int_{A} C(u_1,u_2)\varphi_k(u_1) du_1 = \lambda_k \varphi_k(u_1), \quad u_2 \in [0,2], 
\end{equation}
where $C$ is for instance an exponential covariance kernel, 
\begin{equation}
    C(u_1,u_2) = \sigma^2 \exp{-\abs{u_1-u_2}/\eta},
\end{equation}
where $\sigma = 1$ and $\eta=3$. We then consider flux functions of the form
\begin{equation}
    f(u,z) = \frac{u^2}{2} + \sum_{k=1}^\s z_k \sqrt{\lambda_i} \varphi_k(u),
\end{equation}
where $z\in[0,1]^\s$. In \cite[Section 6.1]{mishra2016numerical} it is demonstrated that the eigenvalues decay as $\lambda_i \sim i^{-2.5}$, which is in agreement with our assumptions on the fast decay of Karhunen-Loève eigenvalues. In addition, the eigenfunctions are linear combinations of sines and cosines. 

A typical realization of this flux function is shown in Figure \ref{fig:flux-example}. In order to create the training set, we generate $M$ independent realizations of $z$ and do the same for the test set. We generate the training (and test) samples using the Godunov flux, second-order reconstruction with the minmod limiter and SSP2-RK time stepping. 

\begin{figure}
    \centering
    \begin{subfigure}{0.49\textwidth}
    \includegraphics[width=\textwidth]{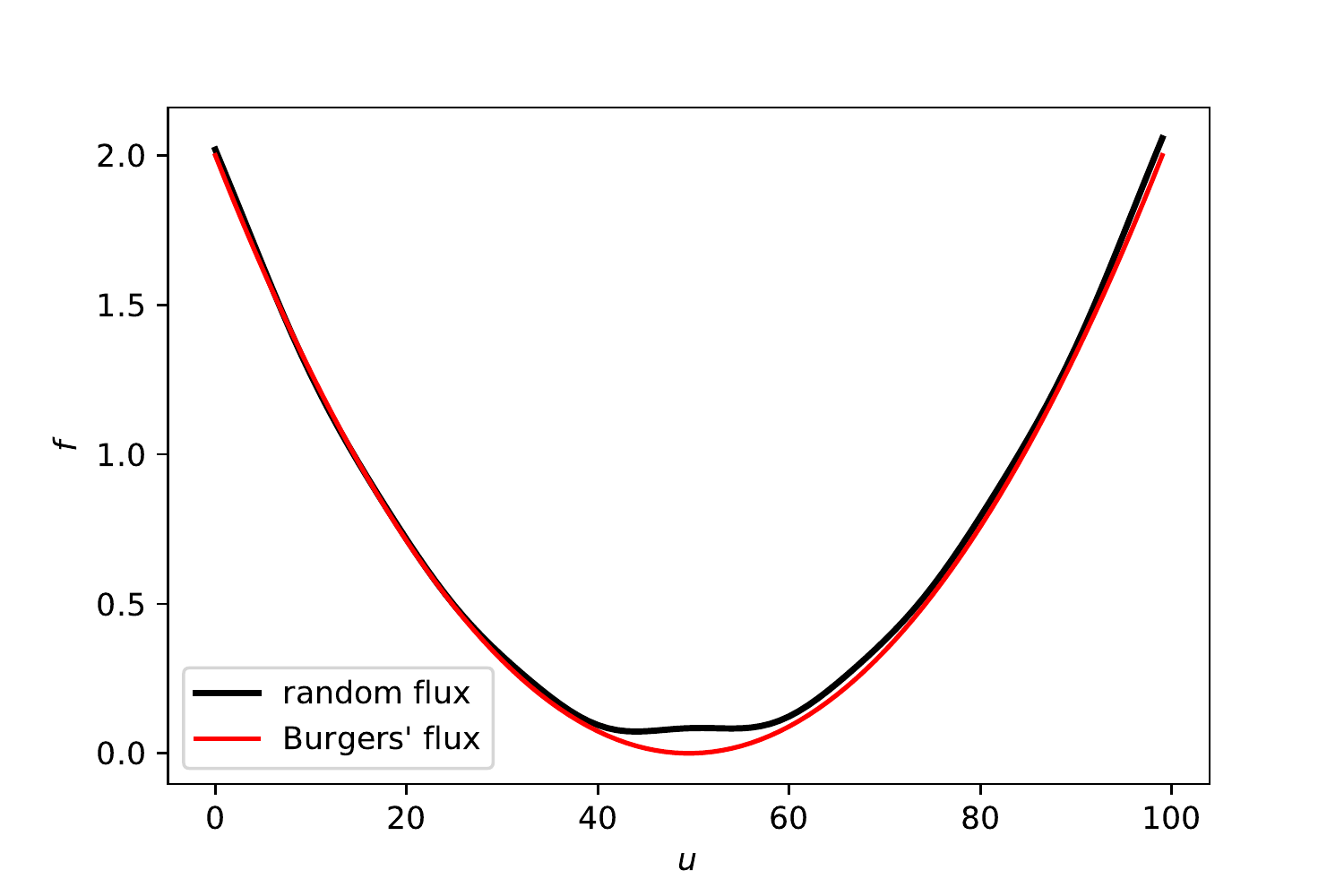}
    \caption{Typical example of a random flux function together with the unperturbed flux function of the inviscid Burgers' equation. }
    \label{fig:flux-example}
    \end{subfigure}
    \begin{subfigure}{0.49\textwidth}
    \includegraphics[width=\textwidth]{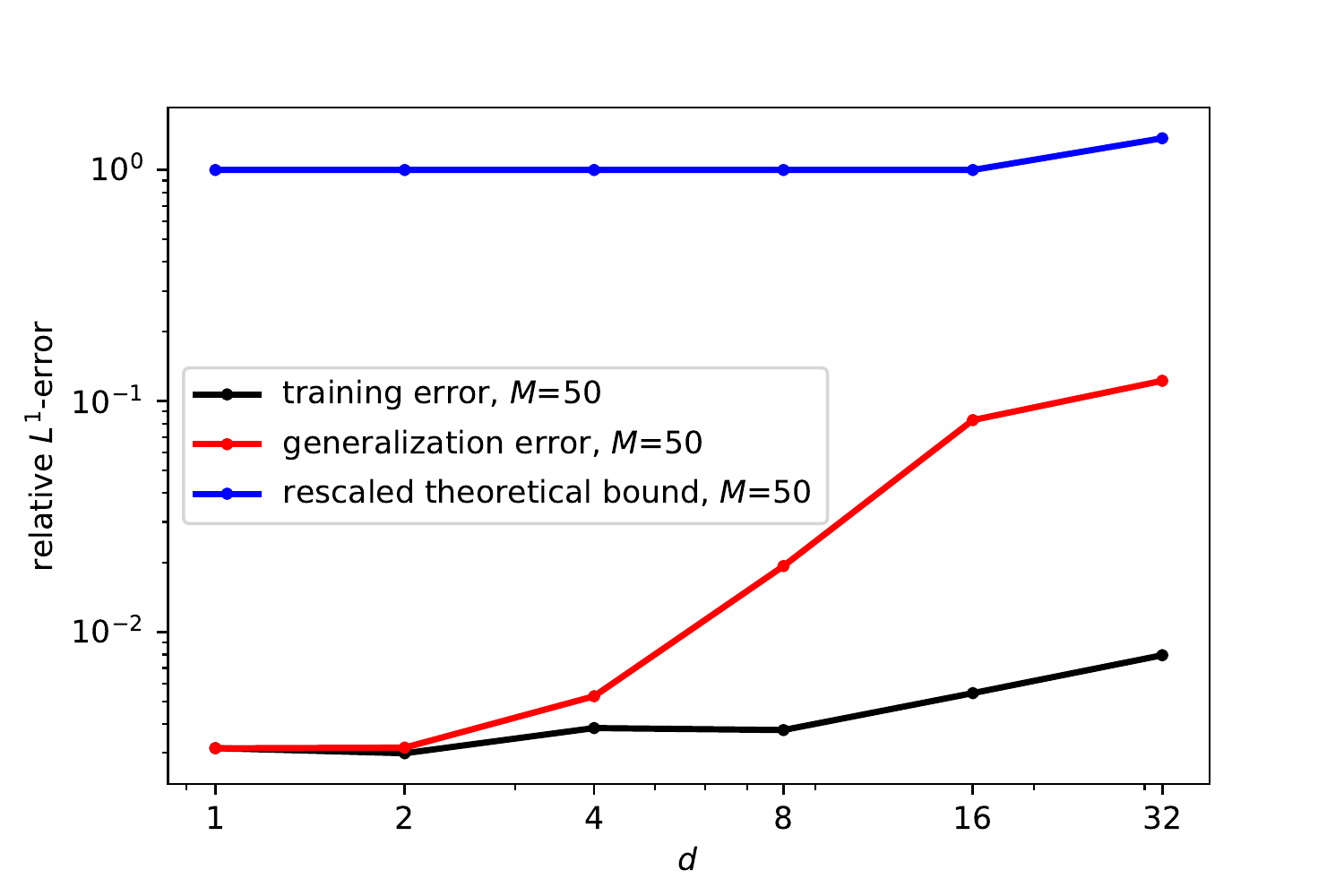}
    \caption{Dependence of the training and generalization error on $d$. Our theoretical bound is shown for comparison, after rescaling with a factor of $1\cdot 10^3$. }
    \label{fig:flux-d2}
    \end{subfigure}
    \caption{Experiments for Burgers' equation with fixed initial data and parametric flux.}
\end{figure}

In Figure \ref{fig:flux-d2}, we see that the dependence of the training and generalization error on $d$ is only polynomially, as in the previous experiment. This time the network already generalizes well for $M=50$. A comparison between our theoretical error bound and the empirically observed errors reveals a large difference that is often seen for such bounds. Note that the theoretical bound in Figure \ref{fig:flux-d2} is rescaled by a factor of $1\cdot 10^3$ for clarity. As in the fixed flux case, the bound massively overestimates the empirically observed error. 

% \begin{figure}
%     \centering

%      \includegraphics[scale=0.6]{}

%     \caption{Dependence of the training and generalization error on $d$ for a DNN approximating the solution of Burgers' equation with a parameteric flux function. Our theoretical bound is shown for comparison, after rescaling with a factor of $9\cdot 10^8$. }
%     \label{fig:flux-d}
% \end{figure}

\section{Discussion}

ReLU neural networks are widely used in the approximation of PDE solutions and have been proven to overcome the curse of dimensionality (CoD) in a number of settings. However, there is a paucity of theoretical results on the neural network approximation of (parametric) hyperbolic conservation laws. The contributions of this paper to overcoming this paucity can be summarized as follows,
\begin{itemize}
    \item Under mild assumptions and for parametric initial data that can be approximated by a ReLU neural network without incurring the CoD in the parameter dimension, we have proven that ReLU neural networks overcome the CoD for scalar conservation laws with parametric initial data (Theorem \ref{thm:scl-dnn-1d}). The error bound is explicit and the bounds on the weights and biases are independent of the chosen accuracy. The proof relies amongst others on the convergence of the Lax-Friedrichs method and a stability result for hyperbolic conservation laws. 
    \item Theorem \ref{thm:scl-dnn-flux-1d} proves under similar assumptions that CoD in the parameter dimensions can also be overcome when the flux function is parametric as well. Multiple extensions are considered, such as a neural network approximation in space-time (Corollary \ref{cor:scl-dnn-1d-spacetime}) and an approximation for multi-dimensional scalar conservation laws (Corollary \ref{cor:scl-dnn-multidim}). 
    \item By adapting the results of \cite{jentzen2020overall} to our setting, we prove a rigorous upper bound on the generalization gap i.e., the gap between generalization and training error (Theorem \ref{thm:generalization-gap}). This allows us to prove an estimate on the (accumulative) generalization error in terms of the (accumulative) training error, the number of training samples and the size of the neural network. 
\end{itemize}
The theoretical error bounds are illustrated by a number of numerical experiments in Section \ref{sec:numex}. These clearly demonstrate how the training and generalization error only grow polynomially in terms of the parameter dimension and how the gap between generalization and training error converges to zero, when when the number of training samples is increased, as predicted by the theoretical results. 

We conclude our discussion by pointing out possible limitations and suggestions for further research,
\begin{itemize}
    \item We have only focused on neural netwok approximations in the \emph{supervised learning} paradigm. In this setting, for every parameter in the training set one needs to (approximately) solve the conservation law, e.g. by using a traditional numerical method, which might still be computationally expensive. Fortunately, it is possible to extend to ideas of the paper to suitable unsupervised learning methods such as \emph{weak PINNs} \cite{deryck2022weak}, as well as to operator learning methods using the generic error estimates of \cite{deryck2022generic}. 
    \item Corollary \ref{cor:total-bound-eg} does not prove that the generalization error can be made arbitrarily small, it only provides a computable upper bound in terms of the size of the training set and network, as well as the (cumulative) training error. It is straightforward to argue that the latter is small when the optimization algorithm finds a global minimum and the training set and hypothesis space are large enough \cite[Remark 3.12]{deryck2022navierstokes}. 
    More rigorous results are available under mild assumptions, e.g. \cite{beck2020error,jentzen2020overall}, but are generally worst-case upper bounds that are very large overestimates of the true value. 
    \item The numerical experiments in Section \ref{sec:numex} show that Corollary \ref{cor:total-bound-eg} is an overestimate. One could investigate whether this bound can be made sharper using different techniques. In addition, one could try to lift the restriction to \emph{clipped} ReLU neural networks. Extensions to other bounded activation functions such as tanh are immediate using results from \cite{deryck2021approximation,guhring2021approximation}.
    \item We consider the case of parametric conservation laws, which arise in many different applications such as in UQ and optimal design. However, in many other applications, one will encounter situations where such parametrizations are not necessarily available \cite{deeponets}. In such contexts, it is imperative to learn the entire solution operator, for instance corresponding to the scalar conservation law. This is the domain of \emph{operator learning} and rigorous error bounds for one such operator learning framework (DeepONets) for scalar conservation laws, adapting techniques already considered here, have been presented in \cite{LMK1}. 
\end{itemize}

\bibliographystyle{abbrv}
\bibliography{ref}

\appendix

\section{Additional material for Section \ref{sec:approximation-error}}\label{app:app}

\subsection{Proof of Lemma \ref{lem:flux-approximation}}\label{app:flux-approximation}

\begin{proof}
Let $J\in\mathbb{N}$ a natural number. We will explicitly construct $\Phi^J$ as a linear interpolation of $f$ and prove that the resulting network satisfies the claimed bounds. Define
\begin{equation}
    \mathcal{X}_J = \left\{(x_{-1},\ldots,x_{J+1}) \:\vert\: x_{-1}+\abs{f(a)} = a = x_0 < \ldots < x_J = b = x_{J+1}-\abs{f(b)} \right\}.
\end{equation}
Note that every vector in $\mathcal{X}_J$ defines a (not necessarily uniform) grid on $[a,b]$, complemented by two additional grid points outside of $[a,b]$. 
For every $j\in\{0,\ldots, J\}$ we define the hat function $\rho_j$ through
\begin{equation}
    \rho_j(x) = \frac{\sigma(x-x_{j-1})-\sigma(x-x_j)}{x_j-x_{j-1}} - \frac{\sigma(x-x_{j})-\sigma(x-x_{j+1})}{x_{j+1}-x_{j}}. 
\end{equation}
Note that the functions $\rho_j$ form a partition of unity on $[a,b]$, i.e. $\sum_j\rho_j \equiv 1$ on $[a,b]$. Define 
\begin{align}
    \begin{split}
        \Phi^J(x) & = \sum_{j=0}^J f(x_j)\rho_j(x) \\
        & = \sum_{j=0}^J \left(\frac{f(x_{j+1})-f(x_{j})}{x_{j+1}-x_{j}}-\frac{f(x_{j})-f(x_{j-1})}{x_{j}-x_{j-1}}\right)\sigma(x-x_j) \\
        & \quad + \frac{f(a)}{\abs{f(a)}} \sigma(x-x_{-1}) + \frac{f(b)}{\abs{f(b)}} \sigma(x-x_{J+1}).
    \end{split}
\end{align}
This rewriting reveals that $\Phi^J$ is a ReLU neural network with one hidden layer consisting of $J+3$ neurons and $3J+10$ weights and biases all of which are bounded by $\max \{1,\abs{a}+\abs{f(a)},\abs{b}+\abs{f(b)},2 \norm{f'}_\infty\}$. 

We proceed by bounding the approximation error made when approximating $f$ by $\Phi^J$.
For every $j\in\{0,\ldots, J\}$, let $\xi_j\in[x_{j-1},x_j]$ be the point that satisfies $f'(\xi_j)(x_{j}-x_{j-1}) = f(x_{j})-f(x_{j-1})$, the existence of which is guaranteed by the mean value theorem. Now denote by $(\Phi^J)'$ the weak derivative of $\Phi^J$. One then has that 
\begin{align}
\begin{split}
\abs{f(x)-\Phi^J(x)-(f(y)-\Phi^J(y))} &= \abs{\int_y^x(f'(t)-(\Phi^J)'(t))dt} \\
&\leq \norm{f'-(\Phi^J)'}_\infty \abs{x-y}.   
\end{split}
\end{align}
By Taylor's theorem, there exists for every $x\in [x_{j-1},x_j]$ a $\zeta_x \in[x_{j-1},x_j]$ such that $f'(x) = f'(\xi_j) + f''(\zeta_x)(x-\xi_j)$. Combining this with the fact that $(\Phi^J)'(x) = f'(\xi_j)$ for all $x\in (x_{j-1},x_j)$, we get that
\begin{equation}
    \norm{f'-(\Phi^J)'}_\infty = \max_j \sup_{x\in (x_{j-1},x_j)}\abs{f'(x)-f'(\xi_j)} \leq \max_j \abs{x_j-x_{j-1}} \sup_{x\in (x_{j-1},x_j)} \abs{f''(x)}.
\end{equation}
This estimate holds for any grid in $\mathcal{X}_J$. Combining the previous inequalities, we obtain
\begin{align}
\begin{split}
 \norm{f-\Phi^J}_{\mathrm{Lip}([a,b];\mathbb{R})} & \leq \norm{f'-(\Phi^J)'}_\infty \\&\leq \inf_{(x_{-1},\ldots,x_{J+1})\in\mathcal{X}_J} \max_j \abs{x_j-x_{j-1}} \sup_{x\in (x_{j-1},x_j)} \abs{f''(x)} \\&\leq \frac{b-a}{J}\norm{f''}_\infty, 
\end{split}
\end{align}
where we used that the error made using the optimal grid is bounded by the error on a uniform grid. 
\end{proof}

\subsection{Proof of Lemma \ref{lem:mult-lipschitz}}\label{app:mult-lipschitz}

\begin{proof}
In \cite[Prop. 2 and 3]{yarotsky2017error}, a parametrized class of ReLU neural networks $f_m$, $m\in\mathbb{N}$, that approximate $f:[0,1]\to[0,1]:x\mapsto x^2$ is constructed. In particular, it holds that $\mathcal{M}(f_m) = \mathcal{L}(f_m) = O(m)$ and $\mathcal{W}(f_m) =\mathcal{B}(f_m) = 4$. 
% We will use this result to prove that for $m$ large enough it holds that
% \begin{equation}
%     \norm{f-f_m}_{\mathrm{Lip}([-M,M];\mathbb{R})}\leq \epsilon. 
% \end{equation}
% The stated result then follows from \cite[Prop. 3]{yarotsky2017error}. 
For $m\in\mathbb{N}$, $f_m$ is piecewise affine and for $x\in I^m_k=[k/2^m,(k+1)/2^m]$, $0\leq k\leq 2^m-1$ given by 
\begin{equation}
    f_m(x) =
    %\left(\frac{k}{2^m}\right)^2+\left(x-\frac{k}{2^m}\right)\frac{\left(\frac{k+1}{2^m}\right)^2-\left(\frac{k}{2^m}\right)^2}{1/2^m}. 
    \left(\frac{k}{2^m}\right)^2+\left(x-\frac{k}{2^m}\right)\frac{2k+1}{2^m}.
\end{equation}
Similar to the proof of Lemma \ref{lem:flux-approximation} and using the above expression, we obtain that
\begin{align}
    \begin{split}
\norm{f-f_m}_{\mathrm{Lip}([0,1];\mathbb{R})} & \leq \max_k  \norm{f'-f_m'}_{L^\infty(I^m_k;\mathbb{R})} \\ &=  \max_k \sup_{x\in I^m_k}  \abs{2x-\frac{2k+1}{2^m}} = \frac{1}{2^m}.
    \end{split}
\end{align}
% This then gives us
% \begin{align}
%     \begin{split}
% \norm{f-f_m}_{\mathrm{Lip}(I^m_k;\mathbb{R})} & \leq \sup_{x\neq y; x,y\in I^m_k} \frac{|f(x)-f_m(x)-f(y)-f_m(y)|}{|x-y|} \\
% & = \sup_{x > y; x,y\in I^m_k} \left\vert \frac{f(x)-f(y)}{ x- y }-\frac{f_m(x)-f_m(y)}{ x- y }\right\vert \\
% & = \sup_{x > y; x,y\in I^m_k} \left\vert x+y-\frac{2k+1}{2^m }\right\vert \\
% & = \frac{1}{2^m}. 
%     \end{split}
% \end{align}
In the spirit of \cite[Prop. 3]{yarotsky2017error} and based on the observation that $xy = \frac{1}{4}((x+y)^2-(x-y)^2)$ we then define $\widehat{\times}_m$ for $(x,y)\in[-M,M]\times[-N,N]$ by
\begin{equation}
    \widehat{\times}_m(x,y) = \frac{(M+N)^2}{4}\left(f_m\left(\frac{\abs{x+y}}{M+N}\right)-f_m\left(\frac{\abs{x-y}}{M+N}\right)\right). 
\end{equation}
Since the map $x\mapsto \abs{x\pm y}/(M+N)$ is $1/(M+N)$-Lipschitz, we get for all $y\in[-N,N]$ that
\begin{equation}
    \norm{\times(\cdot,y)-\widehat{\times}_m(\cdot,y)}_{\mathrm{Lip}([-M,M];\mathbb{R})}\leq 2\cdot \frac{(M+N)^2}{4} \cdot \frac{1}{M+N} \cdot \norm{f-f_m}_{\mathrm{Lip}([0,1];\mathbb{R})} = \frac{M+N}{2^{m+1}}.
\end{equation}
Finally, it follows directly that $\mathcal{M}(f_m) = O(m)$, $\mathcal{L}(f_m) = m+1$, $\mathcal{W}(f_m) = 8$ and $\mathcal{B}(f_m) = O((M+N)^2)$.
\end{proof}

\subsection{Convergence rate of Lax-Friedrichs scheme}\label{app:kuznetsov}

We follow Example 3.14 from \cite{holden2015front} to obtain an explicit error bound on the numerical approximation of a scalar conservation law based on the Lax-Friedrichs scheme. This is result is used in Theorem \ref{thm:scl-dnn-1d}.

\begin{lemma}\label{lem:LxF-rate}
Let $u$ be the solution to the scalar conservation law \eqref{eqn:original-scl} with a flux $f\in C^1(\mathbb{R})$ and $u_0\in BV([a,b])$. Let $u_{\Delta t}$ be the numerical approximation to $u$ based on the Lax-Friedrichs scheme with time step $\Delta t$. It holds that
\begin{equation}
    \norm{u_{\Delta t}(\cdot,T)-u(\cdot,T)}_{L^1([a,b])} \leq 31 \mathrm{TV}(u_0) \frac{T(1+\norm{f'}_{\infty})^{2}}{\sqrt{N}}.
\end{equation}
\end{lemma}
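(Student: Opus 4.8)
The plan is to follow the classical Kuznetsov framework for monotone finite volume schemes, specialised to Lax--Friedrichs. First I would recast the scheme \eqref{eqn:lxf} in its equivalent viscous form
$$U^{n+1}_j = U^n_j - \frac{\Delta t}{2\Delta x}\bigl(f(U^n_{j+1})-f(U^n_{j-1})\bigr) + \tfrac12\bigl(U^n_{j+1}-2U^n_j+U^n_{j-1}\bigr),$$
which exhibits the numerical flux $F(u,v)=\tfrac12(f(u)+f(v))-\tfrac{\Delta x}{2\Delta t}(v-u)$ and a numerical viscosity of coefficient $\tfrac{\Delta x^2}{2\Delta t}$. Under the CFL condition $\norm{f'}_\infty\Delta t\leq\Delta x$ (the choice $\Delta x=\norm{f'}_\infty\Delta t$ made in Step~2 of Theorem \ref{thm:scl-dnn-1d} is admissible) the update map is nondecreasing in each argument, so the scheme is monotone, conservative and consistent. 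From these three structural properties I would extract the standard a priori bounds: a discrete maximum principle, the total variation diminishing estimate $\mathrm{TV}(u_{\Delta t}(\cdot,t^n))\leq\mathrm{TV}(u_0)$ via Harten's lemma, and an $L^1$ Lipschitz-in-time bound with constant proportional to $\norm{f'}_\infty\,\mathrm{TV}(u_0)$.

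Next I would show that the piecewise-constant interpolant $u_{\Delta t}$ satisfies an \emph{approximate} Kruzhkov entropy inequality. Because a monotone scheme carries a numerical entropy flux consistent with $q(u,k)=\mathrm{sgn}(u-k)(f(u)-f(k))$, one obtains a discrete entropy inequality; testing it against a smooth function and converting the resulting sums into integrals produces an entropy \emph{defect} whose magnitude is controlled explicitly by $\mathrm{TV}(u_0)$ together with $\Delta x$ and $\Delta t$. The comparatively large numerical viscosity $\tfrac{\Delta x^2}{2\Delta t}=\tfrac12\norm{f'}_\infty\Delta x$ of Lax--Friedrichs enters precisely at this point and is what forces the constant to be as large as it is.

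I would then invoke Kuznetsov's comparison lemma \cite{holden2015front}: using a symmetric mollifier $\omega_\varepsilon(x)\,\omega_{\varepsilon_0}(t)$ and the doubling-of-variables technique to compare $u_{\Delta t}$ with the exact entropy solution $u$, one arrives at a bound of the form
$$\norm{u_{\Delta t}(\cdot,T)-u(\cdot,T)}_{L^1([a,b])} \leq \norm{u_{\Delta t}(\cdot,0)-u_0}_{L^1} + C\,\mathrm{TV}(u_0)\Bigl(\varepsilon + \varepsilon_0\norm{f'}_\infty + \frac{T(\Delta x+\norm{f'}_\infty\Delta t)}{\varepsilon}\Bigr).$$
The initial term is $O(\Delta x\,\mathrm{TV}(u_0))$ (and vanishes outright for the piecewise-constant data of Theorem \ref{thm:scl-dnn-1d}), hence of higher order and absorbed. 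Finally I would balance the mollification widths, choosing $\varepsilon\sim\sqrt{T\Delta x}$ and $\varepsilon_0\sim\Delta t$, which equalises the competing contributions and yields the $O(\sqrt{\Delta x})=O(1/\sqrt{N})$ rate; substituting $\Delta x=\norm{f'}_\infty\Delta t$ and $\Delta t=T/N$ then produces the claimed form, with the factor $(1+\norm{f'}_\infty)^2$ collecting one power from the viscosity/consistency error and one from re-expressing the optimal $\varepsilon$ through the CFL relation. The main obstacle is exactly this final bookkeeping: deriving the entropy defect with a fully explicit constant and propagating it through the Kuznetsov estimate and the optimisation so that the numerical constant $31$ and the precise power of $(1+\norm{f'}_\infty)$ emerge, rather than merely the qualitative $1/\sqrt{N}$ rate.
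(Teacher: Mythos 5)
Your overall strategy is the same as the paper's: both arguments rest on Kuznetsov's comparison lemma applied to the Lax--Friedrichs scheme (the paper follows Example 3.14 of \cite{holden2015front} and redoes the computation there with explicit constants), using monotonicity, the TVD property, the $L^1$ Lipschitz-in-time bound, and a final optimisation over the mollification parameters. So the architecture of your argument is correct.

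There is, however, a concrete error in the final balancing step. The entropy defect $-\Lambda_{\epsilon,\epsilon_0}$ for Lax--Friedrichs is \emph{not} of the form $T(\Delta x+\norm{f'}_\infty\Delta t)/\epsilon$ alone: the time-mollification direction contributes terms of the same size divided by $\epsilon_0$, so that the correct bound reads (as in the paper's computation)
\begin{equation*}
-\Lambda_{\epsilon,\epsilon_0}(u_{\Delta t},u) \leq 16\,\mathrm{TV}(u_0)\,(1+\norm{f'}_\infty)^3\left(\frac{1}{\epsilon}+\frac{1}{\epsilon_0}\right)T\Delta t .
\end{equation*}
With your choice $\epsilon_0\sim\Delta t$ the term $T\Delta t/\epsilon_0$ is of order $T$ and does not vanish as $N\to\infty$, so the resulting estimate would not converge at all, let alone at rate $N^{-1/2}$. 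The correct choice, obtained by minimising jointly over \emph{both} parameters, is $\epsilon\sim\epsilon_0\sim(1+\norm{f'}_\infty)\sqrt{T\Delta t}$; it is this joint balance that produces the factor $(1+\norm{f'}_\infty)^2=\sqrt{(1+\norm{f'}_\infty)\cdot(1+\norm{f'}_\infty)^3}$ and, after inserting $\Delta t=T/N$ and the CFL relation $\Delta x=\norm{f'}_\infty\Delta t$, the numerical constant $31$. Apart from this, your accounting of the ingredients is accurate and matches the paper's: the moduli of continuity $\nu_0,\nu_T$ are bounded by $2\norm{f'}_\infty\mathrm{TV}(u_0)(\epsilon_0+\Delta t)$ via the $L^1$ time-Lipschitz estimate, and the initial error $\Delta x\,\mathrm{TV}(u_0)$ is of higher order.
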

\begin{proof}
We follow Example 3.14 from \cite{holden2015front} and apply it specifically to the Lax-Friedrichs scheme. We redo the calculations of \cite[p. 91]{holden2015front} (using their notation) to get an explicit expression for the there mentioned constants. We denote define $F = \norm{f'}_{\infty}$ and denote by $\lambda>0$ the CFL number such that $\Delta t = \lambda \Delta x$. In this case we let $\lambda = F^{-1}$. First note that $p=0$ and $p'=1$ for Lax-Friedrichs. This then gives
\begin{align}
    \begin{split}
        -\Lambda_{\epsilon,\epsilon_0}(u_{\Delta t}, u) &\leq 4 \text{TV}(u_0) F \sum_{n=0}^{N-1}\left(\frac{1}{2}(p+p'+1)^2 \frac{(\Delta x)^2}{\epsilon}+\frac{(\Delta t)^2}{\epsilon_0}+\lambda\left(\frac{(\Delta x)^2}{\epsilon}+\frac{\Delta x\Delta t}{\epsilon_0}\right)\right)\\
        & = 8 \text{TV}(u_0) F N (\Delta t)^2 \left(\frac{1}{\lambda^2\epsilon}+\frac{1}{\epsilon_0}+\lambda\left(\frac{1}{\lambda^2\epsilon}+\frac{1}{\lambda\epsilon_0}\right)\right) \\
        & = 8 \text{TV}(u_0) F T\Delta t \left(\frac{2}{\epsilon_0}+\frac{F^2+F}{\epsilon}\right)\\
        & \leq 16 \text{TV}(u_0) (F+1)^3 \left(\frac{1}{\epsilon_0}+\frac{1}{\epsilon}\right)T\Delta t. 
    \end{split}
\end{align}
Next we want to find an upper bound for 
\begin{equation}
    \nu_t(u_{\Delta t},\epsilon) = \sup_{\abs{\tau}\leq \epsilon} \norm{u_{\Delta t}(\cdot,t+\tau)-u_{\Delta t}(\cdot,t)}_1. 
\end{equation}
We have that for the Lax-Friedrichs scheme
\begin{align}\label{eqn:lxf-lipschitz-time}
    \begin{split}
&\Delta x \sum_j \abs{U^{n+1}_j-U^n_j} \\
&\qquad=  \Delta x \sum_j \abs{\frac{1}{2}(U^n_{j-1}+U^n_{j+1}) - \frac{\Delta t}{2 \Delta x}(\widehat{f}(U^n_{j+1})-\widehat{f}(U^n_{j-1}))-U^n_j}\\
&\qquad\leq \frac{\Delta x}{2} \sum_j \left(\abs{U^n_{j-1}-U^n_j}+\abs{U^n_{j+1}-U^n_j}+\lambda\norm{f}_{\text{Lip}}\abs{U^n_{j+1}-U^n_{j-1}}\right) \\
& \qquad \leq  \frac{\Delta t}{2\lambda} (1 +\lambda F) \sum_j \left(\abs{U^n_{j-1}-U^n_j}+\abs{U^n_{j+1}-U^n_j}\right)\\
& \qquad \leq 2F \text{TV}(u_0)\Delta t.
    \end{split}
\end{align}
As a direct consequence, we obtain
\begin{equation}
    \nu_t(u_{\Delta t},\epsilon) \leq 2F \text{TV}(u_0)(\epsilon+\Delta t).
\end{equation}
Following \cite{holden2015front}, choose the initial approximation such that
\begin{equation}
    \norm{u_{\Delta t}(\cdot,0)-u_{0}(\cdot)}_1 \leq \Delta x \text{TV}(u_0). 
\end{equation}
Now recalling Kuznetsov's lemma \cite[Theorem 3.11]{holden2015front}, 
\begin{align}
\begin{split}
  \norm{u_{\Delta t}(\cdot,T)-u(\cdot,T)}_1 \leq &\norm{u_{\Delta t}(\cdot,0)-u_0(\cdot)}_1 + \text{TV}(u_0)(2\epsilon+\epsilon_0 \norm{f}_{\text{Lip}}) \\ &+ \frac{1}{2}(\nu_T(u_{\Delta t},\epsilon_0)+\nu_0(u_{\Delta t},\epsilon_0))  -\Lambda_{\epsilon,\epsilon_0}, 
\end{split}
\end{align}
and all the previous inequalities, we get that
\begin{align}
\begin{split}
  \norm{u_{\Delta t}(\cdot,T)-u(\cdot,T)}_1 \leq &  \text{TV}(u_0)\big[(\Delta x + 2\epsilon+\epsilon_0 \norm{f}_{\text{Lip}}) \\ &+ 2F(\epsilon_0+\Delta t) + 16(F+1)^3 \left(\frac{1}{\epsilon_0}+\frac{1}{\epsilon}\right)T\Delta t \big]\\
  \leq &\text{TV}(u_0)\big[ 3F \Delta t + 3(1+F)(\epsilon + \epsilon_0) + 16(F+1)^3 \left(\frac{1}{\epsilon_0}+\frac{1}{\epsilon}\right)T\Delta t\big]. 
\end{split}
\end{align}
Minimizing with respect to $\epsilon,\epsilon_0$ then gives us 
\begin{align}
\begin{split}
  \norm{u_{\Delta t}(\cdot,T)-u(\cdot,T)}_1 
  \leq & \text{TV}(u_0)\left[3F \Delta t + 4\cdot 4(1+F)^2\sqrt{3T\Delta t}\right]\\
   = & \text{TV}(u_0)\left[\frac{3FT}{N} + 16T (1+F)^2\sqrt{\frac{3}{N}}\right]\\
   \leq & 31 \text{TV}(u_0) \frac{T(1+F)^2}{\sqrt{N}}. 
\end{split}
\end{align}
\end{proof}

\subsection{Replacing the multiplication operator with a ReLU network of fixed architecture}\label{sec:star-operation}

The cornerstone of many constructive ReLU network approximations is the approximation of the multiplication operator as proposed by Yarotsky \cite{yarotsky2017error}. As per usual, the size of the network corresponding to this approximation depends on the desired approximation accuracy. We discuss an alternative method to approximate the calculation of a convex combination using a ReLU neural network of which the size does not depend on the approximation accuracy, as proposed in \cite{deryck2020approximation} based on Yarotsky's work. This method is used in Corollary \ref{cor:scl-dnn-1d-spacetime}.

\begin{definition}\label{def:star}
For $\lambda>0$, we denote by $\star$ the operation given by 
\begin{equation}
    \star: [0,1]\times[-\lambda,\lambda]\to [-\lambda,\lambda]: (x,y)\mapsto x \star y := \sigma(y+\lambda x-\lambda)-\sigma(-y+\lambda x-\lambda), 
\end{equation}
where $\sigma$ is the ReLU activation function (cf. Definition \ref{def:relu-act}). 
\end{definition}
We compare $x\star y$ with $x\cdot y$ for fixed $x\in[0,1]$ and $\lambda>0$ in Figure \ref{fig:star}. In addition, the following properties hold. 
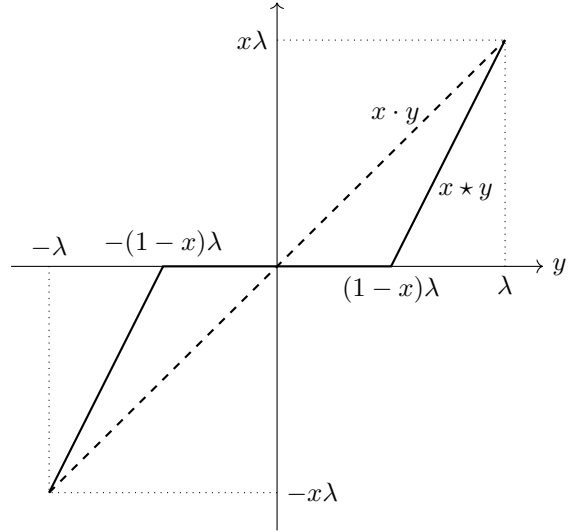
\begin{figure}
    \centering
    \begin{tikzpicture}

\draw[->] (-3.5,0) -- (3.5,0) node[right] {$y$};
\draw[->] (0,-3.5) -- (0,3.5) node[right] {};
\draw[dashed,thick] (-3,-3)--(3,3);
\draw[thick] (-3,-3)--(-1.5,0)--(1.5,0)--(3,3);
\node[left] at (2,2) {$x\cdot y$};
\node[right] at (2,1) {$x\star y$};
\draw[dotted] (0,3)--(3,3)--(3,0);
\draw[dotted] (0,-3)--(-3,-3)--(-3,0);
\node[below] at (1.5,0) {$(1-x)\lambda$};
\node[below] at (3,0) {$\lambda$};
\node[above] at (-1.5,0) {$-(1-x)\lambda$};
\node[above] at (-3,0) {$-\lambda$};
\node[left] at (0,3) {$x\lambda$};
\node[right] at (0,-3) {$-x\lambda$};
\end{tikzpicture}
    \caption{Plot of $x\star y$ and $x\cdot y$ for fixed $x\in[0,1]$ and $\lambda>0$.}
    \label{fig:star}
\end{figure}

\begin{lemma}\label{lem:star}
For $\lambda>0$, the operation $\star$ satisfies the following properties: 
\begin{enumerate}
    \item For all $x\in\{0,1\}$ and $y\in[-\lambda,\lambda]$ it holds true that $x \star y = xy$.
    \item For all $x\in[0,1]$ and $y\in[0,\lambda]$ we have $0\leq x \star y \leq xy$.
    \item For all $x\in[0,1]$ and $y\in[-\lambda,0]$ we have $xy\leq x \star y \leq 0$.
    \item There exist $x\in[0,1]$ and $y_1,y_2\in[-\lambda,\lambda]$ such that \[\min\{y_1,y_2\}\leq (1-x)\star y_1 + x \star y_2\leq \max\{y_1,y_2\}\] does not hold. 
    \item For all $x\in[0,1]$ and $y_1,y_2\in[-\lambda,\lambda]$ it holds true that \[\min\{y_1,y_2\}\leq y_1 + x \star (y_2-y_1)\leq \max\{y_1,y_2\}.\]
\end{enumerate}
\end{lemma}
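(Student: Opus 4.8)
The plan is to first rewrite the definition into a form in which the case analysis becomes transparent. Since $\lambda x - \lambda = \lambda(x-1)\leq 0$ for $x\in[0,1]$, I observe that when $y\geq 0$ the second rectifier vanishes, because $-y+\lambda(x-1)\leq 0$, so that $x\star y = \sigma\!\big(y-\lambda(1-x)\big)$; symmetrically, when $y\leq 0$ the first rectifier vanishes and $x\star y = -\sigma\!\big(-y-\lambda(1-x)\big)$. Crucially, these two identities hold for \emph{all} $y$ of the appropriate sign, not merely for $y\in[-\lambda,\lambda]$. In particular $\star$ is odd in its second argument, i.e.\ $x\star(-y) = -(x\star y)$, which will let me deduce the ``negative'' statements from the ``positive'' ones.

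Properties (1)--(3) then follow by direct substitution. For (1), setting $x=1$ gives $1\star y = \sigma(y)-\sigma(-y) = y$, and setting $x=0$ gives $0\star y = \sigma(y-\lambda)-\sigma(-y-\lambda)=0$ for $y\in[-\lambda,\lambda]$, matching $0\cdot y$. For (2) with $y\in[0,\lambda]$ I use $x\star y = \sigma(y-\lambda(1-x))$; nonnegativity is immediate, while the upper bound $x\star y\leq xy$ reduces, after splitting on whether $y\leq\lambda(1-x)$ or not, to the inequality $(1-x)(y-\lambda)\leq 0$, which holds since $1-x\geq 0$ and $y\leq\lambda$. Property (3) is then obtained from (2) by the oddness of $\star$, replacing $y$ by $-y$.

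For (4) I will simply exhibit a counterexample: taking $x=\tfrac12$ and $y_1=y_2=\tfrac{\lambda}{2}$, the simplified formula gives $\tfrac12\star\tfrac{\lambda}{2} = \sigma(\tfrac{\lambda}{2}-\tfrac{\lambda}{2}) = 0$, so $(1-x)\star y_1 + x\star y_2 = 0$, whereas $\min\{y_1,y_2\} = \tfrac{\lambda}{2}>0$; hence the sandwiching fails. Finally, for (5) I set $z=y_2-y_1$ and aim to show $0\leq x\star z\leq z$ when $z\geq 0$ (the case $z\leq 0$ again following by oddness); adding $y_1$ then places $y_1+x\star z$ between $y_1$ and $y_2$, which is exactly the claim.

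I expect the domain subtlety in (5) to be the only real obstacle. Since $y_1,y_2\in[-\lambda,\lambda]$, the argument $z$ may range over $[-2\lambda,2\lambda]$ and so exceed the interval on which property (2) was stated; moreover the sharp bound $x\star z\leq xz$ of (2) is actually \emph{false} for $z>\lambda$. The remedy is to relax to a weaker bound that survives on the doubled interval: for every $z\geq 0$ one has $x\star z = \sigma(z-\lambda(1-x))\leq\sigma(z)=z$ by monotonicity of $\sigma$ together with $\lambda(1-x)\geq 0$, while $x\star z\geq 0$ is trivial. Recognizing that the $xy$-type estimate must be replaced by this $y$-type estimate is the key point; everything else is routine substitution and sign-tracking.
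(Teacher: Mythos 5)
Your proof is correct and complete. Note that the paper itself states Lemma \ref{lem:star} without proof (it only points to Figure \ref{fig:star} and to the reference \cite{deryck2020approximation}), so there is no in-paper argument to compare against; your write-up simply supplies the missing verification. The one genuinely delicate point --- that in property (5) the second argument $y_2-y_1$ may lie in $[-2\lambda,2\lambda]$, outside the nominal domain of $\star$, so the bound $x\star z\leq xz$ from property (2) is unavailable and must be replaced by the monotonicity bound $\sigma(z-\lambda(1-x))\leq\sigma(z)=z$ --- is exactly the right observation, and your use of the oddness of $\star$ in its second argument to dispatch the negative cases is clean.
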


In conclusion, suppose we need to approximate $xy_1+(1-x)y_2$ where $x\in[0,1]$ and $y_1,y_2\in[-\lambda,\lambda]$ for some $\lambda>0$. 
Lemma \ref{lem:star} then assures us that $y_1 + x \star (y_2-y_1) = xy_1+(1-x)y_2$ for $x\in\{0,1\}$. Furthermore, if $0<x<1$ then $y_1 + x \star (y_2-y_1)$ is still a convex combination of $y_1$ and $y_2$. For applications where this suffices, the operation $\star$ provides a more efficient alternative to Yarotsky's approximation of the multiplication operator. 

\section{Additional material for Section \ref{sec:gen}}\label{app:gen}

We prove some auxiliary results on the measurability of the training error and the Lipschitz continuity of the training error with respect to the parameters of the ReLU neural network. 

\begin{lemma}\label{lem:measurability}
Let $T>0$, $\theta\in\Theta$ and let $D\times Y\to \mathbb{R}:(x,y) \mapsto u(T,x,y)$ and $D\times Y\to \mathbb{R}:(x,y) \mapsto u_\theta(x,y)$ measurable mappings.  The the mapping $Y\to \mathbb{R}: y\mapsto \norm{u(T,\cdot,y)-u_{\theta}(\cdot,y)}_{L^1(D)}$ is $\Sigma / \mathcal{B}([0,\infty]$-measurable. 
\end{lemma}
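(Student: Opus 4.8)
The plan is to recognize the $L^1(D)$-norm as a partial integral of a nonnegative, jointly measurable function and then invoke Tonelli's theorem. First I would introduce the integrand $g:D\times Y\to[0,\infty)$ defined by $g(x,y)=\abs{u(T,x,y)-u_\theta(x,y)}$, so that the mapping in question is exactly $y\mapsto\int_D g(x,y)\,dx$. By hypothesis both $(x,y)\mapsto u(T,x,y)$ and $(x,y)\mapsto u_\theta(x,y)$ are measurable with respect to the product $\sigma$-algebra $\mathcal{B}(D)\otimes\Sigma$; since the difference of two real-valued measurable functions is measurable and the absolute value $t\mapsto\abs{t}$ is continuous (hence Borel measurable), the composition $g$ is $(\mathcal{B}(D)\otimes\Sigma)/\mathcal{B}([0,\infty))$-measurable.

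Having established joint measurability of $g$, I would apply Tonelli's theorem on the product of the $\sigma$-finite measure space $(D,\mathcal{B}(D),\mathrm{Leb})$ and the probability space $(Y,\Sigma,\mu)$. Tonelli guarantees that, for a nonnegative jointly measurable function, the partial integral $y\mapsto\int_D g(x,y)\,dx$ is $\Sigma/\mathcal{B}([0,\infty])$-measurable, valued in the extended nonnegative reals. This is precisely the asserted measurability, so the proof concludes.

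For the specific $u_\theta$ of Definition \ref{def:sol-associated-dnn}, I would remark that its joint measurability is in fact automatic and need not be assumed: by construction $u_\theta(x,y)=\sum_{j=1}^J(\Phi_\theta(y))_j\,\mathbf{1}_{C_j}(x)$, a finite sum in which each coordinate $(\Phi_\theta(y))_j$ is continuous in $y$ and each $\mathbf{1}_{C_j}$ is the indicator of a Borel subset of $D$. Such a function is a finite combination of products of measurable functions and is therefore $\mathcal{B}(D)\otimes\Sigma$-measurable. Thus the only genuine hypothesis the argument relies on is the measurability of the exact solution map $(x,y)\mapsto u(T,x,y)$.

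The statement is essentially a bookkeeping exercise, and I do not anticipate a serious obstacle; the one point requiring mild care is to ensure the hypotheses are read as \emph{joint} measurability on the product $\sigma$-algebra rather than mere separate measurability in each variable, since Tonelli requires the former. Provided the hypotheses are interpreted this way (as is standard, and as holds for $u_\theta$ by its explicit construction), the argument goes through without complication.
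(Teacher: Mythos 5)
Your argument is correct and follows essentially the same route as the paper's proof: joint measurability of the integrand via composition of measurable functions with the continuous absolute value, followed by Fubini--Tonelli to get measurability of the partial integral. Your additional remarks on the automatic joint measurability of $u_\theta$ and on the need for joint (rather than separate) measurability of $u(T,\cdot,\cdot)$ are a useful elaboration of details the paper leaves implicit.
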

\begin{proof}
Let $T>0$ and $\theta\in\Theta$. The result follows directly from the measurability of $u(T,\cdot,\cdot)$ and $u_\theta$, the Fubini-Tonelli theorem and the fact that the composition of continuous and measurable functions is again measurable. 
\end{proof}

\begin{lemma}\label{lem:1NN-lipschitz}
Let $d,L,W,R\in\mathbb{N}$, $a,b,\alpha,\beta\in\mathbb{R}$ with $a<b$ and $\alpha+1\leq \beta$, $D=[a,b]$, $Y=[0,1]^d$ and let $\Theta$ be the parameters of the class of $(\alpha,\beta)$-clipped ReLU neural networks $\Phi_\theta:D\to[\alpha,\beta]$ (cf. Definitions \ref{def:relu-nn} and \ref{def:clipped-dnn}), with $\mathcal{L}(\Phi_\theta)=L$, $\mathcal{W}(\Phi_\theta)=W$ and such that $\mathcal{B}(\Phi_\theta)\leq R$. Then it holds for all $\theta, \eta\in\Theta$ that 
\begin{equation}
    \norm{\Phi_\theta-\Phi_\eta}_{L^{\infty}} \leq L R^{L-1}(W+1)^L\norm{\theta-\eta}_{\infty}. 
\end{equation}
\end{lemma}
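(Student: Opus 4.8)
The plan is to prove the estimate by a layer-by-layer induction in which I track two quantities simultaneously: the $\ell^\infty$-magnitude of the intermediate activations of a single network, and the $\ell^\infty$-distance between the corresponding activations of $\Phi_\theta$ and $\Phi_\eta$. Throughout I would use that the ReLU activation $\sigma$ is $1$-Lipschitz in the $\ell^\infty$-norm, and that since every weight matrix $W_k$ has at most $W$ columns with entries bounded by $R$, the affine map $x\mapsto W_k x+b_k$ has induced $\ell^\infty$-operator norm at most $WR$. Since the clipping function $\mathscr{C}$ is $1$-Lipschitz and carries fixed parameters, it suffices to bound the distance between the underlying realizations, so I would work directly with the realization of Definition \ref{def:relu-nn}, taking the input to lie in $[0,1]^{l_0}$ (as it does for the solution maps, whose input is $y\in Y=[0,1]^d$).

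Write $\theta=((W_k,b_k))_{k=1}^L$ and $\eta=((V_k,c_k))_{k=1}^L$, and for an input $x$ with $\norm{x}_\infty\leq 1$ set $g_0=h_0=x$, $g_k=\sigma(W_kg_{k-1}+b_k)$, $h_k=\sigma(V_kh_{k-1}+c_k)$ for $1\leq k\leq L-1$. The first step is the a priori bound $\norm{g_k}_\infty\leq((W+1)R)^k$ (and the same for $h_k$), which I would prove by induction from $\norm{g_k}_\infty\leq WR\norm{g_{k-1}}_\infty+R$ together with $((W+1)R)^{k-1}\geq 1$.

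The core step is the recurrence for $\delta_k:=\norm{g_k-h_k}_\infty$. Using that $\sigma$ is $1$-Lipschitz and the identity
\[
(W_kg_{k-1}+b_k)-(V_kh_{k-1}+c_k)=W_k(g_{k-1}-h_{k-1})+(W_k-V_k)h_{k-1}+(b_k-c_k),
\]
I would bound the three terms by $WR\,\delta_{k-1}$, by $W\norm{h_{k-1}}_\infty\norm{\theta-\eta}_\infty$, and by $\norm{\theta-\eta}_\infty$, respectively. Inserting the activation bound and absorbing constants via $((W+1)R)^{k-1}\geq 1$ yields
\[
\delta_k\leq (W+1)R\,\delta_{k-1}+(W+1)^k R^{k-1}\norm{\theta-\eta}_\infty.
\]
Since $\delta_0=0$, unrolling this linear recurrence gives $\delta_k\leq k(W+1)^kR^{k-1}\norm{\theta-\eta}_\infty$.

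Finally, the output layer carries no activation, so the same decomposition gives $\norm{\Phi_\theta(x)-\Phi_\eta(x)}_\infty\leq WR\,\delta_{L-1}+(W+1)^LR^{L-1}\norm{\theta-\eta}_\infty$. Substituting $\delta_{L-1}\leq(L-1)(W+1)^{L-1}R^{L-2}\norm{\theta-\eta}_\infty$ and factoring out $(W+1)^{L-1}R^{L-1}$ reduces the bracket to $W(L-1)+(W+1)=WL+1\leq L(W+1)$, which collapses the two contributions into $L(W+1)^LR^{L-1}\norm{\theta-\eta}_\infty$, as claimed. I expect the only real difficulty to be the bookkeeping: keeping the $(W+1)$ and $R$ exponents aligned so that the error propagated from earlier layers and the error freshly injected at each layer recombine into exactly the stated constant, which hinges precisely on the two elementary inequalities $((W+1)R)^{k-1}\geq 1$ and $WL+1\leq L(W+1)$.
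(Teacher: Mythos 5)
Your proof is correct and arrives at exactly the stated constant, but it is a genuinely different route from the paper: the paper's entire proof of this lemma is a citation to \cite[Cor. 2.37]{beck2020error}, whereas you reconstruct the underlying argument from scratch. Your layer-by-layer induction, tracking both the activation magnitude bound $\norm{g_k}_\infty\leq((W+1)R)^k$ and the difference recurrence $\delta_k\leq (W+1)R\,\delta_{k-1}+(W+1)^kR^{k-1}\norm{\theta-\eta}_\infty$, unrolls to $\delta_k\leq k(W+1)^kR^{k-1}\norm{\theta-\eta}_\infty$, and the final-layer bookkeeping via $W(L-1)+(W+1)=WL+1\leq L(W+1)$ all checks out, including the edge case $L=1$. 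What your approach buys is a self-contained, verifiable derivation showing precisely where each factor of $R$ and $(W+1)$ enters; what the paper's citation buys is brevity and reliance on a result stated in the exact generality needed. One small caveat: the lemma as stated declares $\Phi_\theta:D\to[\alpha,\beta]$ with $D=[a,b]$, while your a priori bound $\norm{g_0}_\infty\leq 1$ requires the input to lie in $[0,1]^{l_0}$. You flag this and justify it by the intended application (the solution maps have input $y\in Y=[0,1]^d$, which is indeed how the lemma is used in Lemma \ref{lem:ourNN-lipschitz}), so this is a defensible reading of an imprecision already present in the paper's statement; if the input domain were genuinely $[a,b]$ with $\max\{\abs{a},\abs{b}\}>1$, the constant would need an extra factor of $\max\{1,\abs{a},\abs{b}\}$ in the base of the geometric growth.
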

\begin{proof}
This is \cite[Cor. 2.37]{beck2020error}. 
\end{proof}
In the following, we use the notation from Lemma \ref{lem:1NN-lipschitz} without explicitly defining it again. 
\begin{lemma}\label{lem:ourNN-lipschitz}
For $y\in Y$, it holds that
\begin{equation}
    \norm{u_{\theta_1}(\cdot,y)-u_{\theta_2}(\cdot,y)}_{L^1(D)} \leq L(b-a)R^{L-1}(W+1)^L\norm{\theta_1-\theta_2}_{\infty}. 
\end{equation}
\end{lemma}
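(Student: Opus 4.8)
The plan is to exploit the piecewise constant structure of the solution function $u_\theta$ (Definition \ref{def:sol-associated-dnn}) and reduce the claim to the already-established $L^\infty$ Lipschitz estimate of Lemma \ref{lem:1NN-lipschitz}. First I would unfold the definition: for a fixed $y\in Y$, the function $u_\theta(\cdot,y)$ is constant on each cell $C_j=[x_{j-1/2},x_{j+1/2})$, taking the value $(\Phi_\theta(y))_j$, and each of the $J$ cells partitioning $D=[a,b]$ has Lebesgue measure $(b-a)/J$, where $J$ is the output dimension of $\Phi_\theta$. Consequently the $L^1$-difference splits cellwise as
\begin{equation}
\norm{u_{\theta_1}(\cdot,y)-u_{\theta_2}(\cdot,y)}_{L^1(D)} = \sum_{j=1}^J \int_{C_j}\abs{(\Phi_{\theta_1}(y))_j-(\Phi_{\theta_2}(y))_j}\,dx = \frac{b-a}{J}\sum_{j=1}^J \abs{(\Phi_{\theta_1}(y))_j-(\Phi_{\theta_2}(y))_j}.
\end{equation}

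Next I would bound each summand uniformly. Applying Lemma \ref{lem:1NN-lipschitz} componentwise, i.e. to each scalar output coordinate of the network with the $L^\infty$-bound taken uniformly over $Y$, yields for every $y\in Y$ and every $1\leq j\leq J$ that
\begin{equation}
\abs{(\Phi_{\theta_1}(y))_j-(\Phi_{\theta_2}(y))_j} \leq \norm{\Phi_{\theta_1}-\Phi_{\theta_2}}_{L^\infty} \leq L R^{L-1}(W+1)^L \norm{\theta_1-\theta_2}_\infty.
\end{equation}

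Finally I would substitute this uniform bound into the sum. Since there are exactly $J$ terms, the factor $J$ cancels against the $1/J$ coming from the cell width:
\begin{equation}
\norm{u_{\theta_1}(\cdot,y)-u_{\theta_2}(\cdot,y)}_{L^1(D)} \leq \frac{b-a}{J}\cdot J\cdot L R^{L-1}(W+1)^L\norm{\theta_1-\theta_2}_\infty = (b-a)L R^{L-1}(W+1)^L\norm{\theta_1-\theta_2}_\infty,
\end{equation}
which is precisely the claimed estimate.

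The only point requiring care, and the closest thing to an obstacle, is the passage from the scalar-output formulation of Lemma \ref{lem:1NN-lipschitz} to the present vector-valued network $\Phi_\theta:Y\to[\alpha,\beta]^J$: one must observe that the cited Lipschitz constant $LR^{L-1}(W+1)^L$ controls each output coordinate simultaneously, the output dimension $J$ being absorbed into the maximal width $W$, so that the same constant may be pulled out of the cellwise sum. Everything else is a direct computation, and I would emphasize that the resulting bound is notably independent of the grid resolution $J$.
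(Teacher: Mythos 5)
Your proof is correct and follows essentially the same route as the paper: both decompose the $L^1$ norm cellwise into $\frac{b-a}{J}\sum_{j=1}^J\abs{u_{\theta_1}(x_j,y)-u_{\theta_2}(x_j,y)}$ and then apply Lemma \ref{lem:1NN-lipschitz} to each output coordinate (the paper phrases this as applying it to the subnetworks $y\mapsto u_\theta(x_j,y)$), so that the factor $J$ cancels. Your additional remark that the constant is independent of the grid resolution $J$ is a fair observation but does not change the argument.
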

\begin{proof}
Because we identified the output of neural networks with functions (cf. Definition \ref{def:sol-associated-dnn}), the Lipschitz continuity of the associated solution maps with respect to the parameters does not follow immediately from Lemma \ref{lem:1NN-lipschitz}. Recall from Definition \ref{def:sol-associated-dnn}, that the outputs of the actual networks are $u_\theta(x_j,y)$. Hence, Lemma \ref{lem:1NN-lipschitz} can be applied on the subnetworks $y\mapsto u_\theta(x_j,y)$. We then get that
\begin{align}
\begin{split}
    \norm{u_{\theta_1}(\cdot,y)-u_{\theta_2}(\cdot,y)}_{L^1(D)} & = \frac{b-a}{J} \sum_{j=1}^J \abs{u_{\theta_1}(x_j,y)-u_{\theta_2}(x_j,y)} \\
    & \leq (b-a) \cdot L R^{L-1}(W+1)^L\norm{\theta_1-\theta_2}_{\infty}
\end{split}    
\end{align}
\end{proof}

\begin{lemma}\label{lem:Et-lipschitz}
Let $p\in\mathbb{N}$. Define $\mathcal{Y}_{\theta,i} = \Et(\theta,\{y_i\})$. If $u(T,x,y)\in [\alpha,\beta]$ and $\sup_{\theta\in\Theta}u_\theta(x,y)\in [\alpha,\beta]$, then it holds for $\theta_1, \theta_2$ that 
\begin{equation}
    \abs{(\mathcal{Y}_{\theta_1,i})^p-(\mathcal{Y}_{\theta_2,i})^p} \leq p(\beta-\alpha)^{p-1}(b-a)^p LR^{L-1}(W+1)^L \norm{\theta_1-\theta_2}_{\infty}.
\end{equation}
\end{lemma}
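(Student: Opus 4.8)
The plan is to reduce the statement to the single-sample case and then combine a Lipschitz estimate for $\mathcal{Y}_{\theta,i}$ in $\theta$ with a uniform bound on its range. First I would note that, by definition \eqref{eqn:training-error} of the training error applied to the singleton set $\{y_i\}$ (so that the empirical average has $M=1$ term), one has
\begin{equation}
    \mathcal{Y}_{\theta,i} = \Et(\theta,\{y_i\}) = \norm{u(T,\cdot,y_i)-u_\theta(\cdot,y_i)}_{L^1(D)}.
\end{equation}
Thus the two quantities $\mathcal{Y}_{\theta_1,i}$ and $\mathcal{Y}_{\theta_2,i}$ differ only through the second argument of an $L^1$-norm, and the whole lemma reduces to controlling (i) how much $\mathcal{Y}_{\theta,i}$ can change when $\theta$ is perturbed, and (ii) how large $\mathcal{Y}_{\theta,i}$ can be.

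For (i) I would apply the reverse triangle inequality for the $L^1(D)$-norm to get
\begin{equation}
    \abs{\mathcal{Y}_{\theta_1,i}-\mathcal{Y}_{\theta_2,i}} \leq \norm{u_{\theta_1}(\cdot,y_i)-u_{\theta_2}(\cdot,y_i)}_{L^1(D)},
\end{equation}
and then invoke Lemma \ref{lem:ourNN-lipschitz} directly, which bounds the right-hand side by $L(b-a)R^{L-1}(W+1)^L\norm{\theta_1-\theta_2}_\infty$. For (ii), the hypotheses $u(T,x,y)\in[\alpha,\beta]$ and $u_\theta(x,y)\in[\alpha,\beta]$ force the integrand $\abs{u(T,x,y_i)-u_\theta(x,y_i)}$ to be at most $\beta-\alpha$ pointwise; integrating over $D=[a,b]$ then yields $0\leq \mathcal{Y}_{\theta,i}\leq (\beta-\alpha)(b-a)$ for every $\theta\in\Theta$.

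Finally I would combine these using the elementary inequality $\abs{s^p-t^p}\leq p\,\max\{s,t\}^{p-1}\abs{s-t}$, valid for $s,t\geq 0$ (which follows from the factorization $s^p-t^p=(s-t)\sum_{k=0}^{p-1}s^{k}t^{p-1-k}$). Taking $s=\mathcal{Y}_{\theta_1,i}$ and $t=\mathcal{Y}_{\theta_2,i}$ and bounding $\max\{s,t\}^{p-1}\leq \big((\beta-\alpha)(b-a)\big)^{p-1}$ via the range bound gives
\begin{equation}
    \abs{(\mathcal{Y}_{\theta_1,i})^p-(\mathcal{Y}_{\theta_2,i})^p} \leq p\big((\beta-\alpha)(b-a)\big)^{p-1}\cdot L(b-a)R^{L-1}(W+1)^L\norm{\theta_1-\theta_2}_\infty,
\end{equation}
and rearranging the product $(b-a)^{p-1}\cdot(b-a)=(b-a)^p$ yields exactly the claimed constant $p(\beta-\alpha)^{p-1}(b-a)^pLR^{L-1}(W+1)^L$. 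None of the steps is genuinely hard; the only point requiring a little care is the power-difference inequality, where one must use the uniform \emph{upper} bound on $\mathcal{Y}_{\theta,i}$ rather than on $\norm{\theta}_\infty$, so that the exponent $p-1$ attaches to $(\beta-\alpha)(b-a)$ and not to the network weights.
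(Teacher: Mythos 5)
Your proof is correct and follows essentially the same route as the paper's: the factorization of $s^p-t^p$ combined with the uniform bound $\abs{\mathcal{Y}_{\theta,i}}\leq(\beta-\alpha)(b-a)$, the reverse triangle inequality for the $L^1$-norm, and Lemma \ref{lem:ourNN-lipschitz} for the Lipschitz dependence on $\theta$. No discrepancies to report.
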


\begin{proof}
Let $p\in\mathbb{N}$. For $x,y\in\mathbb{R}$ it holds that $x^p-y^p = (x-y)\sum_{k=1}^px^{k-1}y^{p-k}$. From the assumptions of the lemma statement, it follows that $\abs{\mathcal{Y}_{\theta,i}}\leq (\beta-\alpha)(b-a)$. Therefore, 
\begin{equation}
    \abs{(\mathcal{Y}_{\theta_1,i})^p-(\mathcal{Y}_{\theta_2,i})^p} \leq p (\beta-\alpha)^{p-1}(b-a)^{p-1} \abs{\mathcal{Y}_{\theta_1,i}-\mathcal{Y}_{\theta_2,i}}.
\end{equation}
Using the reverse triangle inequality we obtain that
\begin{align}
    \begin{split}
\abs{\mathcal{Y}_{\theta_1,i}-\mathcal{Y}_{\theta_2,i}} &= \abs{\norm{u(T,\cdot,y_i)-u_{\theta_1}(\cdot,y_i)}_{L^1(D)}-\norm{u(T,\cdot,y_i)-u_{\theta_2}(\cdot,y_i)}_{L^1(D)}}\\
& \leq \int_D \abs{\abs{u(T,x,y_i)-u_{\theta_1}(x,y_i)}-\abs{u(T,x,y_i)-u_{\theta_2}(x,y_i)}} dx \\
& \leq  \int_D\abs{u_{\theta_1}(x,y_i)-u_{\theta_2}(x,y_i)}dx\\
& = \norm{u_{\theta_1}(\cdot,y_i)-u_{\theta_2}(\cdot,y_i)}_{L^1(D)}. 
    \end{split}
\end{align}
Now using Lemma \ref{lem:ourNN-lipschitz}, we find that
\begin{equation}
    \norm{u_{\theta_1}(\cdot,y_i)-u_{\theta_2}(\cdot,y_i)}_{L^1(D)} \leq L(b-a)R^{L-1}(W+1)^L \norm{\theta_1-\theta_2}_{\infty}.
\end{equation}
Combining all previous inequalities gives the wanted result.
\end{proof}

\begin{lemma}\label{lem:covering}
Let $a,b\in\mathbb{R}$, $r>0$ and $k\in\mathbb{N}$. Then it holds that
\begin{equation}
    \mathcal{C}^{([a,b]^{k},\norm{\cdot}_\infty)}_r \leq \left\lceil \frac{b-a}{r}\right\rceil^{k}.
\end{equation}
\end{lemma}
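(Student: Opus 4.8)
The plan is to exhibit an explicit $r$-net of $[a,b]^k$ in the $\norm{\cdot}_\infty$ metric and then simply count its elements. Throughout I assume $a<b$, the only case of interest. The key structural feature I will exploit is that the $\norm{\cdot}_\infty$ distance between two points of $\mathbb{R}^k$ is the maximum of the coordinatewise distances, so that a net can be built one coordinate at a time and assembled into a Cartesian product without any loss in the covering radius.

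First I would build a one-dimensional net of $[a,b]$. Set $m = \lceil (b-a)/r\rceil$ and partition $[a,b]$ into $m$ consecutive subintervals of equal length $(b-a)/m$. Since $m \geq (b-a)/r$, each such subinterval has length at most $r$, so its midpoint lies within distance $(b-a)/(2m) \leq r/2 \leq r$ of every point of that subinterval. Letting $A_1 \subseteq [a,b]$ denote the set of these $m$ (distinct) midpoints, it follows that for every $x \in [a,b]$ there is some $c \in A_1$ with $\abs{x-c} \leq r$; that is, $A_1$ is an $r$-net of $([a,b],\abs{\cdot})$ of cardinality exactly $m$.

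Next I would pass to the product net $A := A_1^k \subseteq [a,b]^k$, which has $\abs{A} = m^k$ elements. Given any $x = (x_1,\ldots,x_k) \in [a,b]^k$, I apply the one-dimensional covering property in each coordinate to obtain $c_j \in A_1$ with $\abs{x_j - c_j} \leq r$ for $1 \leq j \leq k$, and set $c = (c_1,\ldots,c_k) \in A$. Then
\begin{equation}
    \norm{x-c}_\infty = \max_{1\leq j\leq k}\abs{x_j-c_j} \leq r,
\end{equation}
so $A$ is an $r$-net of $([a,b]^k,\norm{\cdot}_\infty)$. By the definition of the covering number (Definition \ref{def:covering}) we conclude $\mathcal{C}^{([a,b]^k,\norm{\cdot}_\infty)}_r \leq \abs{A} = m^k = \lceil (b-a)/r\rceil^k$, as claimed.

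The argument is entirely constructive and I do not anticipate a genuine obstacle; the only point requiring care is the verification that reducing to coordinatewise covering is lossless, which is precisely where the choice of the $\norm{\cdot}_\infty$ metric is essential. For an $\ell^2$ metric the same product construction would cover only a radius of order $r\sqrt{k}$, forcing a finer per-coordinate grid and hence a worse bound, so the maximum-norm structure is what makes the clean factorized count possible.
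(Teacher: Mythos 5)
Your proof is correct: the midpoint net of $m=\lceil (b-a)/r\rceil$ points per coordinate, combined via the Cartesian product and the fact that the $\norm{\cdot}_\infty$ distance factorizes coordinatewise, is exactly the standard argument for this bound. The paper actually states Lemma \ref{lem:covering} without giving any proof (it is a routine fact, cf. the analogous covering-number estimates in the cited work of Jentzen et al.), so there is nothing to contrast your argument with; the only cosmetic caveat is your (reasonable) restriction to $a<b$, which is the regime used everywhere else in the paper.
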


\end{document}